\newtheorem{thm}{Theorem}[section]
\newtheorem{defi}[thm]{Definition}
\newtheorem{rem}[thm]{Remark}
\newtheorem{prop}[thm]{Proposition}
\newtheorem{ex}[thm]{Example}
\newtheorem{cor}[thm]{Corollary}
\newtheorem{lem}[thm]{Lemma}
\newtheorem*{clm}{Claim}
\newtheorem*{ack}{Acknowledgements}
\newtheorem*{theor}{Theorem}
\newtheorem*{coro}{Corollary}
\newcommand{\wt}{\operatorname{wt}}
\newcommand{\Ker}{\operatorname{Ker}}
\newcommand{\Ind}{\operatorname{Ind}}
\newcommand{\Res}{\operatorname{Res}}
\title[A COMPARISON OF NEWTON-OKOUNKOV POLYTOPES OF SCHUBERT VARIETIES]{\fontsize{11pt}{11pt}\selectfont A COMPARISON OF NEWTON-OKOUNKOV POLYTOPES OF SCHUBERT VARIETIES}
\date{}
\author[N. Fujita]{\fontsize{10pt}{10pt}\selectfont Naoki Fujita}
\author[H. Oya]{\fontsize{10pt}{10pt}\selectfont Hironori Oya}
\begin{document}
\address[Naoki Fujita]{Department of Mathematics, Tokyo Institute of Technology, 2-12-1 Oh-okayama, Meguro-ku, Tokyo 152-8551, Japan}
\email{fujita.n.ac@m.titech.ac.jp}
\subjclass[2010]{Primary 17B37; Secondary 05E10, 14M15, 14M25}
\address[Hironori Oya]{Graduate School of Mathematical Sciences, The University of Tokyo, Komaba, Tokyo, 153-8914, Japan}
\email{oya@ms.u-tokyo.ac.jp}

\keywords{Newton-Okounkov body, Schubert variety, Crystal basis, Perfect basis}
\thanks{The work of the first author was supported by Grant-in-Aid for JSPS Fellows (No.\ 16J00420). The work of the second author was supported by Grant-in-Aid for JSPS Fellows (No.\ 15J09231) and the Program for Leading Graduate Schools, MEXT, Japan.}
\begin{abstract}
A Newton-Okounkov body is a convex body constructed from a polarized variety with a valuation on its function field. Kaveh (resp., the first author and Naito) proved that the Newton-Okounkov body of a Schubert variety associated with a specific valuation is identical to the Littelmann string polytope (resp., the Nakashima-Zelevinsky polyhedral realization) of a Demazure crystal. These specific valuations are defined algebraically to be the highest term valuations with respect to certain local coordinate systems on a Bott-Samelson variety. Another class of valuations, which is geometrically natural, arises from some sequence of subvarieties of a polarized variety. In this paper, we show that the highest term valuation used by Kaveh (resp., by the first author and Naito) and the valuation coming from a sequence of specific subvarieties of the Schubert variety are identical on a perfect basis with some positivity properties. The existence of such a perfect basis follows from a categorification of the negative part of the quantized enveloping algebra. As a corollary, we prove that the associated Newton-Okounkov bodies coincide through an explicit affine transformation.
\end{abstract}
\maketitle
\setcounter{tocdepth}{2}
\tableofcontents
\section{Introduction}

A Newton-Okounkov body $\Delta(X, \mathcal{L}, v)$ is a convex body constructed from a polarized variety $(X, \mathcal{L})$ with a valuation $v$ on its function field $\mathbb{C}(X)$; this generalizes the notion of Newton polytope for a toric variety. The theory of Newton-Okounkov bodies was introduced by Okounkov  {\cite{Oko1, Oko2, Oko3}} and afterward developed independently by Kaveh-Khovanskii \cite{KK1} and by Lazarsfeld-Mustata \cite{LM}. A remarkable fact is that the theory of Newton-Okounkov bodies of Schubert varieties is deeply connected with representation theory \cite{FaFL, FeFL, FN, Kav, Kir}. For instance, Kaveh \cite{Kav} (resp., the first author and Naito \cite{FN}) showed that the Littelmann string polytope constructed from the Littelmann string parametrization for a Demazure crystal (resp., the Nakashima-Zelevinsky polyhedral realization constructed from the Kashiwara embedding of a Demazure crystal) is identical to the Newton-Okounkov body of a Schubert variety with respect to a specific valuation, which is defined algebraically to be the highest term valuation with respect to a certain local coordinate system on a Bott-Samelson variety (cf.~\cite{Fuj}). There are valuations which arise naturally from geometric data of $X$, more precisely, some sequences of subvarieties of $X$. The class of such valuations includes many interesting examples, and many people have been focused on Newton-Okounkov bodies with respect to such valuations (see, for instance, \cite{KLM} and \cite{LM}). In this paper, we show that the valuation used by Kaveh (resp., by the first author and Naito) and the one coming from a sequence of specific subvarieties of the Schubert variety are identical on a perfect basis with some positivity properties. 

To be more precise, let $X$ be an irreducible normal projective variety over $\mathbb{C}$ of complex dimension $r$, and $\mathcal{L}$ a very ample line bundle on $X$. We consider a sequence of irreducible closed subvarieties \[X_\bullet \colon X_r \subset X_{r-1} \subset \cdots \subset X_0 = X\] such that $\dim_\mathbb{C}(X_k) = r-k$ for $0 \le k \le r$, and assume that $X_k$ is a normal subvariety of $X_{k-1}$ for $1 \le k \le r$. By the normality assumption, there exists a collection $u_1, \ldots, u_r$ of rational functions on $X$ such that the restriction $u_k |_{X_{k-1}}$ is a not identically zero rational function on $X_{k-1}$ that has a zero of first order on the hypersurface $X_k$ for every $k$. Out of such a collection $u_1, \ldots, u_r$ of rational functions, we construct a valuation $v_{X_\bullet} \colon \mathbb{C}(X) \setminus \{0\} \rightarrow \mathbb{Z}^r$, $f \mapsto (a_1, \ldots, a_r)$, as follows. The first coordinate $a_1$ is the order of vanishing of $f$ on $X_1$. Then we have $(u_1 ^{-a_1} f)|_{X_1} \in \mathbb{C}(X_1) \setminus \{0\}$, and the second coordinate $a_2$ is the order of vanishing of $(u_1 ^{-a_1} f)|_{X_1}$ on $X_2$. Continuing in this way, we define all $a_k$. This is the definition of $v_{X_\bullet}$. The Newton-Okounkov body $\Delta(X, \mathcal{L}, v_{X_\bullet})$ inherits information about algebraic, geometric, and combinatorial properties of $X$; for instance, the Newton-Okounkov body $\Delta(X, \mathcal{L}, v_{X_\bullet})$ encodes numerical equivalence information of the line bundle $\mathcal{L}$ (see \cite{LM}). In addition, by \cite[Theorem 1]{And}, we can systematically construct a series of toric degenerations of $X$ under the assumption that the associated semigroup $\Gamma_{X_\bullet}(H^0(X, \mathcal{L}))$ (see \cite[Sections 2 and 3]{And} for the definition) is finitely generated. In the case that $X$ is a Schubert variety and $X_\bullet$ is a sequence of specific subvarieties of the Schubert variety, this semigroup $\Gamma_{X_\bullet}(H^0(X, \mathcal{L}))$ is identical to the semigroup $S(X, \mathcal{L}, v_{X_\bullet}, \tau)$ that we will define in Definition \ref{Newton-Okounkov convex body}. It is natural to ask whether the valuation used by Kaveh (resp., by the first author and Naito) can be realized as a valuation of the form $v_{X_\bullet}$. This question was suggested by Kaveh in \cite[Introduction (after Theorem 1)]{Kav}. Our main result in this paper gives an answer to this question.

Let $G$ be a connected, simply-connected semisimple algebraic group over $\mathbb{C}$, $\mathfrak{g}$ its Lie algebra, $W$ the Weyl group, and $s_i \in W$, $i \in I$, the simple reflections, where $I$ denotes an index set for the vertices of the Dynkin diagram. Choose a Borel subgroup $B \subset G$, and denote by $X(w) \subset G/B$ the Schubert variety corresponding to $w \in W$. A dominant integral weight $\lambda$ gives a line bundle $\mathcal{L}_\lambda$ on $G/B$; by restricting this bundle, we obtain a line bundle on $X(w)$, which we denote by the same symbol $\mathcal{L}_\lambda$. From the Borel-Weil type theorem, we know that the space $H^0(X(w), \mathcal{L}_\lambda)$ of global sections is a $B$-module isomorphic to the dual module $V_w (\lambda)^\ast$ of the Demazure module $V_w (\lambda)$ corresponding to $\lambda$ and $w$. Let ${\bf i} = (i_1, \ldots, i_r) \in I^r$ be a reduced word for $w$, and set $w_{\ge k} \coloneqq s_{i_k} s_{i_{k+1}} \cdots s_{i_r}$, $w_{\le k} \coloneqq s_{i_1} s_{i_2} \cdots s_{i_k}$ for $1 \le k \le r$. Then we obtain two sequences of subvarieties of $X(w)$ which satisfy the conditions above:
\begin{align*}
&X(w_{\ge \bullet}) \colon X(e) \subset X(w_{\ge r}) \subset X(w_{\ge r-1}) \subset \cdots \subset X(w_{\ge 2}) \subset X(w_{\ge 1}) = X(w)\ {\rm and}\\
&X(w_{\le \bullet}) \colon X(e) \subset X(w_{\le 1}) \subset X(w_{\le 2}) \subset \cdots \subset X(w_{\le r-1}) \subset X(w_{\le r}) = X(w),
\end{align*}
where $e \in W$ is the identity element. Consider the valuations $v_{X(w_{\ge \bullet})}, v_{X(w_{\le \bullet})}$ associated with these sequences. 

Denote by $e_i, f_i, h_i \in \mathfrak{g}$, $i \in I$, the Chevalley generators, by $\{\alpha_i \mid i \in I\}$ the set of simple roots, and by $U^-$ the unipotent radical of the opposite Borel subgroup. Let ${\bf B}^{\rm up} = \{\Xi ^{\rm up}(b) \mid b \in \mathcal{B}(\infty)\}$ be a perfect basis of $\mathbb{C}[U^-]$ (see Definition \ref{definition of perfect bases}), and assume that this basis has the following positivity properties:
\begin{enumerate}
\item[{\rm (i)}] the element $(- f_i) \cdot \Xi^{\rm up} (b)$ belongs to $\sum_{b^\prime \in \mathcal{B}(\infty)} \mathbb{R}_{\ge 0} \Xi^{\rm up} (b^\prime)$ for all $b \in \mathcal{B}(\infty)$ and $i \in I$;
\item[{\rm (ii)}] the product $\Xi^{\rm up}(b) \cdot \Xi^{\rm up}(b^\prime)$ belongs to $\sum_{b'' \in \mathcal{B}(\infty)} \mathbb{R}_{\ge 0} \Xi^{\rm up}(b'')$ for all $b, b^\prime \in \mathcal{B}(\infty)$ such that $\wt(b) \in \{-\alpha_i \mid i \in I\}$.
\end{enumerate}
The existence of a perfect basis with the positivity properties (i) and (ii) follows from a categorification of the negative part of the quantized enveloping algebra (see Proposition \ref{existence with positivity}). Remark that this basis induces a $\mathbb{C}$-basis $\{\Xi_{\lambda, w} ^{\rm up} (b) \mid b \in \mathcal{B}_w (\lambda)\}$ of the space $H^0(X(w), \mathcal{L}_\lambda)$ of global sections (see Section 3 and Proposition \ref{p:pos_D}). Let $\tau_\lambda \in H^0(X(w), \mathcal{L}_\lambda)$ be the restriction of the lowest weight vector in $H^0(G/B, \mathcal{L}_\lambda)$. Write ${\bf a}^{\rm op} \coloneqq (a_r, \ldots, a_1)$ for an element ${\bf a} = (a_1, \ldots, a_r) \in \mathbb{R}^r$, and $H^{\rm op} \coloneqq \{{\bf a}^{\rm op} \mid {\bf a} \in H\}$ for a subset $H \subset \mathbb{R}^r$. The following is the main result of this paper.

\vspace{2mm}\begin{theor}
Let $\lambda$ be a dominant integral weight, ${\bf i} \in I^r$ a reduced word for $w \in W$, and $b \in \mathcal{B}_w(\lambda)$.
\begin{enumerate}
\item[{\rm (1)}] The value $v_{X(w_{\ge \bullet})} (\Xi_{\lambda, w} ^{\rm up} (b)/\tau_\lambda)^{\rm op}$ is equal to the Kashiwara embedding of $b$.
\item[{\rm (2)}] The value $v_{X(w_{\le \bullet})} (\Xi_{\lambda, w} ^{\rm up} (b)/\tau_\lambda)^{\rm op}$ is equal to the Littelmann string parametrization of $b$.
\end{enumerate}
\end{theor}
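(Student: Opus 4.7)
I would argue both parts of the theorem by induction on $r = \ell(w)$; the base case $r = 0$ is vacuous. For the inductive step in (1), peel off $s_{i_1}$ from the left: $w = s_{i_1} w_{\ge 2}$ with $\mathbf{i}' \coloneqq (i_2, \ldots, i_r)$ a reduced word for $w_{\ge 2}$. The key observation is that the subvariety sequence $X(w_{\ge \bullet})$, from $X(w_{\ge 2})$ onward, coincides with the analogous sequence $X(w_{\ge 2, \bullet})$ attached to $(w_{\ge 2}, \mathbf{i}')$. Hence, by the recursive definition of $v_{X_\bullet}$, once the first coordinate $a_1 = \operatorname{ord}_{X(w_{\ge 2})}(\Xi^{\rm up}_{\lambda,w}(b)/\tau_\lambda)$ is pinned down and the restriction $(u_1^{-a_1}\Xi^{\rm up}_{\lambda,w}(b))|_{X(w_{\ge 2})}$ is identified (up to a nonzero scalar) with $\Xi^{\rm up}_{\lambda,w_{\ge 2}}(b')$ for a suitable crystal-raised element $b'$, the remaining coordinates are supplied by the inductive hypothesis and reassemble into the Kashiwara embedding of $b$. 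Part (2) is handled symmetrically by peeling $s_{i_r}$ from the right and using the natural $\mathbb{P}^1$-fibration $X(w) \to G/P_{i_r}$ whose section through the divisor $X(w_{\le r-1}) \subset X(w)$ is at hand.

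The core task is therefore to compute $a_1$ and then to identify the restriction. Geometrically, the Schubert divisor $X(w_{\ge 2})$ is transverse to the one-parameter unipotent subgroup $U_{-\alpha_{i_1}}$ acting on $X(w)$ by left multiplication, so a local coordinate vanishing along $X(w_{\ge 2})$ is essentially the coordinate along the $f_{i_1}$-direction. The order of vanishing $a_1$ is therefore detected by the maximal power of $-f_{i_1}$ that can be extracted from $\Xi^{\rm up}(b)$. Positivity property (i) ensures that the expansion of iterated applications of $-f_{i_1}$ on $\Xi^{\rm up}(b)$ in the perfect basis $\mathbf{B}^{\rm up}$ has only nonnegative coefficients; this precludes destructive cancellation and forces $a_1$ to coincide with the appropriate crystal-theoretic quantity (an $\varepsilon^\ast$-value attached to $i_1$ in part (1); analogously, an $\varepsilon$-value attached to $i_r$ in part (2)). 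Given $a_1$, positivity property (ii) enters in the identification of the restricted section: writing the relevant $\Xi^{\rm up}(b)$ in terms of products involving factors $\Xi^{\rm up}(b'')$ with $\wt(b'') = -\alpha_{i_1}$ yields a nonnegative expansion in $\mathbf{B}^{\rm up}$, so upon dividing by $u_1^{a_1}$ and restricting to $X(w_{\ge 2})$, the leading contribution cannot be cancelled and is a nonzero scalar multiple of a single basis element $\Xi^{\rm up}_{\lambda,w_{\ge 2}}(b')$. The compatibility of $\mathbf{B}^{\rm up}$ with Demazure modules (which underlies the induced basis $\{\Xi^{\rm up}_{\lambda,w}(b) \mid b \in \mathcal{B}_w(\lambda)\}$ of $H^0(X(w), \mathcal{L}_\lambda)$) ensures the identification descends to sections on the Schubert divisor.

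The main obstacle is this last identification: a priori, the restriction might be a sum of several basis elements rather than a single one up to scalar, and any cancellation of leading contributions would derail the induction. It is precisely both positivity properties, acting in concert, that rule out this possibility — property (i) prevents the ``$f_{i_1}$-action'' from producing negative contributions, while property (ii) prevents the product-restriction procedure from collapsing. Making this non-cancellation argument rigorous, and matching the resulting $b'$ to the combinatorially-predicted crystal element so that the inductive hypothesis applies verbatim, is the most delicate part of the proof. Once this is in place, the rest of the argument is a careful bookkeeping of how the recursive definitions of $v_{X_\bullet}$, the Kashiwara embedding, and the Littelmann string parametrization unfold in parallel.
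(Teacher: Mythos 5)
Your proposal differs substantially from the paper's argument, and as written it contains a genuine gap at the crux of the problem. The valuation $v_{X(w_{\ge\bullet})}$ recursively determines its coordinates from the front of the flag: the first coordinate is the order of vanishing along $X(w_{\ge 2})$, i.e., the minimal $t_1$-degree of $\Xi^{\rm up}_w(b)$. But the Kashiwara embedding $\Psi_{\bf i}(b)=(a_r',\ldots,a_1')$ is determined recursively from the \emph{back} of the reduced word, and after the $^{\rm op}$ the theorem matches the first valuation coordinate with $a_1'=\varepsilon_{i_1}^\ast\bigl((\tilde e_{i_2}^\ast)^{a_2'}\cdots(\tilde e_{i_r}^\ast)^{a_r'}b\bigr)$, which is the \emph{last} and most deeply nested quantity in the Kashiwara embedding. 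Your claim that this order of vanishing is ``an $\varepsilon^\ast$-value attached to $i_1$'' is therefore incorrect if you mean $\varepsilon_{i_1}^\ast(b)$: already in the paper's $SL_3$ example, $\Xi^{\rm up}_w(b)=t_2^2 t_3$ has $a_1'=0$ while $\varepsilon_{i_1}^\ast(b)=1$. Because $a_1'$ depends on $a_r',\ldots,a_2'$, one cannot peel from the left and match the Kashiwara embedding one coordinate at a time; the same mismatch occurs symmetrically in part (2). Moreover, your step that the ``divided restriction'' $(u_1^{-a_1}\Xi^{\rm up}_{\lambda,w}(b))|_{X(w_{\ge 2})}$ is a single basis element up to scalar is asserted but not established; Corollary \ref{vanishing} only covers the case $a_1=0$, and when $a_1>0$ the $t_1^{a_1}$-coefficient of $\Xi^{\rm up}_w(b)$ need not be a basis element.

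The paper avoids both obstacles by not computing $v^{\rm low}_{\bf i}$ directly. Since Propositions \ref{string polytopes} and \ref{polyhedral realizations} already identify $\Phi_{\bf i}(b)$ and $\Psi_{\bf i}(b)$ with $-v^{\rm high}_{\bf i}(\Xi^{\rm up}_w(b))$ and $-\tilde v^{\rm high}_{\bf i}(\Xi^{\rm up}_w(b))$, the real content (Theorem \ref{main result}) is that $v^{\rm low}_{\bf i}(\Xi^{\rm up}_w(b))=-\tilde v^{\rm high}_{\bf i}(\Xi^{\rm up}_w(b))^{\rm op}$ and its companion, i.e., that the lowest and highest monomials (in the two opposite lex orders) of $\Xi^{\rm up}_w(b)$ are the \emph{same} monomial. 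This fails for general polynomials, so the proof exploits the multiplicative decomposition $\Xi^{\rm up}_w(b_1)\cdot\Xi^{\rm up}_w(b_2)$ with $b_1=\tilde f_{i_1}^{a_1}b_\infty$, $b_2=\tilde f_{i_2}^{a_2}\cdots\tilde f_{i_r}^{a_r}b_\infty$, where $a_k$ are the string parametrization entries. Positivity (ii) gives a nonnegative expansion of the product in the basis, positivity (i) (via Proposition \ref{positivity proposition}) rules out cancellation between monomials, and the additivity of valuations plus a double induction (on $\ell(w)$ and on a total order $\lhd$ on $\mathcal{B}_w(\infty)$ that combines weight with string order) closes the argument. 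So while you correctly flagged positivity and products as the key tools, the architecture you propose runs against the recursion directions and would stall at the first coordinate; the paper's detour through the highest-term valuations is what makes the induction actually close.
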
\vspace{2mm}

\begin{coro}
Let $\lambda$ be a dominant integral weight, and ${\bf i} \in I^r$ a reduced word for $w \in W$.
\begin{enumerate}
\item[{\rm (1)}] The Newton-Okounkov body $\Delta(X(w), \mathcal{L}_\lambda, v_{X(w_{\ge \bullet})})^{\rm op}$ is identical to the Nakashima-Zelevinsky polyhedral realization of $\mathcal{B}_w(\lambda)$.
\item[{\rm (2)}] The Newton-Okounkov body $\Delta(X(w), \mathcal{L}_\lambda, v_{X(w_{\le \bullet})})^{\rm op}$ is identical to the Littelmann string polytope for $\mathcal{B}_w(\lambda)$.
\end{enumerate}
\end{coro}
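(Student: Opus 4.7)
The plan is to deduce both parts of the Corollary in parallel from the Main Theorem above, by assembling the valuations of the distinguished bases $\{\Xi^{\rm up}_{k\lambda, w}(b) \mid b \in \mathcal{B}_w(k\lambda)\}$ as $k$ ranges over the positive integers and then invoking the standard density characterization of the two polyhedral realizations.

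The first step is to unwind the definition of the Newton--Okounkov body so as to express $\Delta(X(w), \mathcal{L}_\lambda, v)$ as the closure of the convex hull of
\[
\bigcup_{k \geq 1} \frac{1}{k}\, v\Bigl((H^0(X(w), \mathcal{L}_\lambda^{\otimes k}) \setminus \{0\})/\tau_\lambda^{k}\Bigr).
\]
Under the Borel--Weil identification $\mathcal{L}_\lambda^{\otimes k} \cong \mathcal{L}_{k\lambda}$, the section $\tau_\lambda^{k}$ is a nonzero scalar multiple of $\tau_{k\lambda}$; since the valuation is insensitive to nonzero scalar factors, one may equivalently take the union over $k$ of $\frac{1}{k}\, v\bigl((H^0(X(w), \mathcal{L}_{k\lambda}) \setminus \{0\})/\tau_{k\lambda}\bigr)$.

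The second step uses the standard lemma from the theory of valuations: for any $\mathbb{C}$-basis $\{\sigma_j\}_j$ of a finite-dimensional subspace $V \subset \mathbb{C}(X(w))$ whose images $\{v(\sigma_j)\}_j$ are pairwise distinct, one has $v(V \setminus \{0\}) = \{v(\sigma_j)\}_j$, since the ultrametric inequality $v(f+g) \geq \min\{v(f), v(g)\}$ is an equality as soon as the two values differ. Applying this to $V = (1/\tau_{k\lambda}) \cdot H^0(X(w), \mathcal{L}_{k\lambda})$ with the basis $\{\Xi^{\rm up}_{k\lambda, w}(b)/\tau_{k\lambda} \mid b \in \mathcal{B}_w(k\lambda)\}$, and invoking the Main Theorem for the dominant integral weight $k\lambda$, one finds that $v_{X(w_{\geq \bullet})}\bigl((H^0(X(w), \mathcal{L}_{k\lambda}) \setminus \{0\})/\tau_{k\lambda}\bigr)^{\rm op}$ equals the image of $\mathcal{B}_w(k\lambda)$ under the Kashiwara embedding, and analogously $v_{X(w_{\leq \bullet})}$ yields the image under the Littelmann string parametrization. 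The required pairwise distinctness of the values on the basis is exactly the injectivity of these parametrization maps on $\mathcal{B}_w(k\lambda)$.

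Combining the above yields
\[
\Delta(X(w), \mathcal{L}_\lambda, v_{X(w_{\geq \bullet})})^{\rm op} = \overline{\mathrm{conv}}\Bigl(\bigcup_{k \geq 1} \tfrac{1}{k}\, \{\text{Kashiwara embedding of } b \mid b \in \mathcal{B}_w(k\lambda)\}\Bigr),
\]
and a parallel identity for $v_{X(w_{\leq \bullet})}$ in terms of the string parametrization. The remaining step is to identify each such closed convex hull with the Nakashima--Zelevinsky polyhedral realization (resp.\ the Littelmann string polytope) of $\mathcal{B}_w(\lambda)$. I expect this to be the principal technical hurdle: it is a density statement to the effect that these polytopes are exhausted, up to closure, by the rescaled lattice point sets of $\mathcal{B}_w(k\lambda)$. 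In the string case this goes back to Littelmann's original work, and in the Nakashima--Zelevinsky case it should be available from the earlier work of the first author and Naito cited in the introduction. Granting this identification, the Corollary is immediate.
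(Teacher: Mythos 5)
Your argument is correct and essentially reproduces the paper's own route. You apply the Main Theorem, in the form of Corollary~\ref{first corollary} combined with Corollary~\ref{vanishing}~(2), to each weight $k\lambda$, pass from a $\mathbb{C}$-basis to the full space of global sections via the ultrametric property of valuations (Proposition~\ref{prop1,val}), and assemble the resulting valuation sets into the Newton--Okounkov body. The paper factors this same computation through Corollary~\ref{corollary of main result} (relating $\Delta(X(w),\mathcal{L}_\lambda, v_{\bf i}^{\rm low})$ to $-\Delta(X(w),\mathcal{L}_\lambda, \tilde{v}_{\bf i}^{\rm high})^{\rm op}$, and dually) and then invokes Propositions~\ref{string polytopes} and~\ref{polyhedral realizations}; the underlying bookkeeping is the one you carry out by hand.

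The step you flag as a potential technical hurdle is not one here. In this paper the Nakashima--Zelevinsky polyhedral realization $\widetilde{\Delta}_{\bf i}^{(\lambda,w)}$ and the Littelmann string polytope $\Delta_{\bf i}^{(\lambda,w)}$ are \emph{defined} (in Section~3) as the slices at height one of the smallest real closed cones containing $\bigcup_{k>0}\{(k, \Psi_{\bf i}(b)) \mid b \in \widetilde{\mathcal{B}}_w(k\lambda)\}$ and $\bigcup_{k>0}\{(k, \Phi_{\bf i}(b)) \mid b \in \widetilde{\mathcal{B}}_w(k\lambda)\}$, respectively, and these coincide with the closures of the rescaled convex hulls you write down (the index sets $\widetilde{\mathcal{B}}_w(k\lambda)$ match your $\mathcal{B}_w(k\lambda)$ through the bijections $\pi_{k\lambda}$ of Remark~\ref{parametrization for modules}). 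So no independent density statement needs to be imported from Littelmann or Fujita--Naito at that point; the final identification is definitional. What \emph{is} imported from those sources is already built into the Main Theorem via Propositions~\ref{string polytopes}~(1) and~\ref{polyhedral realizations}~(1), which you are using implicitly when you invoke it.
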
\vspace{2mm}

For simplicity, we deal with only finite type case, but our results (Theorem and Corollary above) can be extended to symmetrizable Kac-Moody case without much difficulty. Note that in the case $\mathfrak{g}$ is infinite dimensional, there is no $w \in W$ such that $X(w) = G/B$. Indeed, the full flag variety $G/B$ is infinite dimensional while the Schubert variety $X(w)$ is finite dimensional. Hence in this case, we cannot replace $X(w)$ in Corollary above with $G/B$. See \cite{Kum} for the precise treatment. 

Finally, we should mention some previous works. The computation of the Newton-Okounkov body with respect to the valuation $v_{X(w_{\le \bullet})}$ was partially done by Okounkov \cite{Oko2}. In the case that $G = Sp_{2n}(\mathbb{C})$ and ${\bf i}$ is a specific reduced word for the longest element, he proved that the Newton-Okounkov body with respect to $v_{X(w_{\le \bullet})}$ is identical (after an explicit affine transformation) to the type $C$ Gelfand-Zetlin polytope, which coincides (after an explicit affine transformation) with the corresponding Littelmann string polytope by \cite[Corollary 7]{Lit}. Since the collection $u_1, \ldots, u_r$ of rational functions used in \cite{Oko2} is different from ours, the Newton-Okounkov body computed in \cite{Oko2} is not identical to ours, but they are unimodular equivalent. Note that our approach in this paper is quite different from his.

In the paper \cite{Kir}, Kiritchenko considered the valuation associated with the sequence of translated Schubert varieties: \[w X(e) = w_{\le r} X(e) \subset w_{\le r-1} X(w_{\ge r}) \subset w_{\le r-2} X(w_{\ge r-1}) \subset \cdots \subset w_{\le 1} X(w_{\ge 2}) \subset e X(w_{\ge 1}) = X(w).\] In the case that $G = SL_n(\mathbb{C})$ and ${\bf i}$ is a specific reduced word for the longest element, she proved that the corresponding Newton-Okounkov body is identical to the Feigin-Fourier-Littelmann-Vinberg polytope, which is defined by using Dyck paths (cf.\ \cite{FeFL}). Note that this Newton-Okounkov body is not unimodularly equivalent to the ones with respect to the valuations $v_{X(w_{\ge \bullet})}, v_{X(w_{\le \bullet})}$ in general.

This paper is organized as follows. In Section 2, we recall the definition of Newton-Okounkov bodies. In Section 3, we recall some properties of perfect bases, and review the main results of \cite{FN} and \cite{Kav}. Section 4 is devoted to explaining properties of perfect bases with the positivity properties (i) and (ii). Finally, we prove Theorem above in Section 5.

\vspace{2mm}\begin{ack}\normalfont
The first author is greatly indebted to his supervisor Satoshi Naito for fruitful discussions and helpful suggestions. The second author would like to thank his supervisor Yoshihisa Saito for his support and encouragement. The authors wish to express their gratitude to Yoshiyuki Kimura for pointing out some nontrivial gaps. They would like to thank Xin Fang, Megumi Harada, Kiumars Kaveh, and Valentina Kiritchenko for many comments and suggestions. The second author thanks the University of Caen Normandy, where a part of this paper was written, and Bernard Leclerc for hospitality.
\end{ack}

\section{Newton-Okounkov polytopes of Schubert varieties}

Here we recall the definition of Newton-Okounkov bodies of Schubert varieties, following \cite{HK}, \cite{Kav}, \cite{KK1}, and \cite{KK2}. Let $R$ be a $\mathbb{C}$-algebra without nonzero zero-divisors, and fix a total order $<$ on $\mathbb{Z}^r$, $r \in \mathbb{Z}_{>0}$, respecting the addition. 

\vspace{2mm}\begin{defi}\normalfont\label{def,val}
A map $v \colon R \setminus \{0\} \rightarrow \mathbb{Z}^r$ is called a {\it valuation} on $R$ if the following hold: for every $\sigma, \tau \in R \setminus \{0\}$ and $c \in \mathbb{C} \setminus \{0\}$,
\begin{enumerate}
\item[{\rm (i)}] $v(\sigma \cdot \tau) = v(\sigma) + v(\tau)$,
\item[{\rm (ii)}] $v(c \cdot \sigma) = v(\sigma)$, 
\item[{\rm (iii)}] $v (\sigma + \tau) \ge {\rm min} \{v(\sigma), v(\tau) \}$ unless $\sigma + \tau = 0$. 
\end{enumerate}
\end{defi}\vspace{2mm}

The following is a fundamental property of valuations.

\vspace{2mm}\begin{prop}[{See, for instance, \cite[Proposition 1.8 (2)]{Kav}}]\label{prop1,val}
Let $v$ be a valuation on $R$. For $\sigma_1, \ldots, \sigma_s \in R \setminus \{0\}$, assume that $v(\sigma_1), \ldots, v(\sigma_s)$ are all distinct. Then for $c_1, \ldots, c_s \in \mathbb{C}$ such that $\sigma \coloneqq c_1 \sigma_1 + \cdots + c_s \sigma_s \neq 0$, the following equality holds: \[v(\sigma) = \min\{v(\sigma_t ) \mid 1 \le t \le s,\ c_t \neq 0 \}.\]
\end{prop}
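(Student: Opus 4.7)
The plan is to reduce to the case where every coefficient $c_t$ is nonzero (by simply dropping the vanishing terms and reindexing), and then to use axiom (ii) of Definition \ref{def,val} to replace each $c_t \sigma_t$ by $\sigma_t$ for the purposes of computing valuations, so that it suffices to prove the equality for $\sigma = \sigma_1 + \cdots + \sigma_s$ under the hypothesis that $v(\sigma_1), \ldots, v(\sigma_s)$ are pairwise distinct. Reordering, I may further assume $v(\sigma_1) < v(\sigma_2) < \cdots < v(\sigma_s)$.

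First I would establish the inequality $v(\sigma) \ge v(\sigma_1)$ by a routine induction on $s$ using axiom (iii): the base case $s = 1$ is trivial by (ii), and the inductive step writes $\sigma = \sigma_1 + (\sigma_2 + \cdots + \sigma_s)$ and applies (iii) together with the inductive hypothesis $v(\sigma_2 + \cdots + \sigma_s) \ge v(\sigma_2) > v(\sigma_1)$ (after checking that this partial sum is nonzero, which I can ensure by an appropriate inductive setup, or bypass by allowing the partial sum to be zero and using axiom (ii) to reduce).

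For the matching lower bound $v(\sigma) \le v(\sigma_1)$, I would argue by contradiction. Suppose $v(\sigma) > v(\sigma_1)$. Rewriting the relation $\sigma_1 = \sigma - (\sigma_2 + \cdots + \sigma_s)$ and using (ii) to absorb the sign, axiom (iii) gives
\[
v(\sigma_1) \ge \min\{v(\sigma),\ v(\sigma_2 + \cdots + \sigma_s)\}.
\]
By the previous paragraph applied to $\sigma_2 + \cdots + \sigma_s$, the second term is $\ge v(\sigma_2) > v(\sigma_1)$, and by assumption the first term is $> v(\sigma_1)$ as well, producing the desired contradiction.

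The only real subtlety is the bookkeeping around the hypothesis ``unless $\sigma + \tau = 0$'' in axiom (iii): one must ensure that none of the intermediate partial sums that arise in the induction step vanish, or else handle vanishing partial sums separately. I would dispose of this by noting that if a partial sum $\sigma_2 + \cdots + \sigma_s$ happened to be zero, then reapplying the argument to a strictly shorter sum (which still has pairwise distinct valuations on its nonzero terms) and invoking induction on $s$ circumvents the issue. This is the only point that requires care; everything else is a direct application of the axioms.
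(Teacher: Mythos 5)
The paper does not prove this proposition: it is stated as a known fact and cited to \cite[Proposition 1.8 (2)]{Kav}, so there is no in-house argument to compare yours against. Your proof is nevertheless correct and is the standard argument for this ultrametric-type fact. After reducing to the case where all $c_t = 1$ and the valuations are strictly increasing, you prove $v(\sigma) \ge v(\sigma_1)$ by induction using axiom (iii), and then you obtain $v(\sigma) \le v(\sigma_1)$ from the relation $\sigma_1 = \sigma - (\sigma_2 + \cdots + \sigma_s)$: if $v(\sigma) > v(\sigma_1)$, axiom (iii) together with $v(\sigma_2 + \cdots + \sigma_s) \ge v(\sigma_2) > v(\sigma_1)$ would force $v(\sigma_1) > v(\sigma_1)$.

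You correctly flag the only delicate point, namely that $v$ is only defined on $R \setminus \{0\}$, so one must handle the possibility that $\sigma_2 + \cdots + \sigma_s = 0$. Your remedy is sound but phrased a bit obliquely (``reapplying the argument to a strictly shorter sum''). The cleanest way to say it is simply: if $\sigma_2 + \cdots + \sigma_s = 0$, then $\sigma = \sigma_1$ and the conclusion $v(\sigma) = v(\sigma_1)$ is immediate, so nothing remains to prove in that branch; otherwise the inductive hypothesis applies to the nonzero element $\sigma_2 + \cdots + \sigma_s$. With that small tightening, the argument is complete and fully rigorous.
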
\vspace{2mm}

Let $G$ be a connected, simply-connected semisimple algebraic group over $\mathbb{C}$, $\mathfrak{g}$ its Lie algebra, $W$ the Weyl group, and $I$ an index set for the vertices of the Dynkin diagram. Choose a Borel subgroup $B \subset G$ and a maximal torus $T \subset B$. Denote by $\mathfrak{t}$ the Lie algebra of $T$, by $\mathfrak{t}^\ast \coloneqq {\rm Hom}_\mathbb{C} (\mathfrak{t}, \mathbb{C})$ its dual space, and by $\langle \cdot, \cdot \rangle \colon \mathfrak{t}^\ast \times \mathfrak{t} \rightarrow \mathbb{C}$ the canonical pairing. Let $\{\alpha_i \mid i \in I\} \subset \mathfrak{t}^\ast$ be the set of simple roots, $\{h_i \mid i \in I\} \subset \mathfrak{t}$ the set of simple coroots, and $e_i, f_i, h_i \in \mathfrak{g}$, $i \in I$, the Chevalley generators. For $i\in I$, denote by $\mathfrak{g}_i$ the Lie subalgebra of $\mathfrak{g}$ generated by $e_i, f_i, h_i$, which is isomorphic to $\mathfrak{sl}_2(\mathbb{C})$ as a Lie algebra. 

\vspace{2mm}\begin{defi}\normalfont
Let us denote by $X(w)$ for $w \in W$ the Zariski closure of $B \widetilde{w} B/B$ in $G/B$, where $\widetilde{w} \in G$ denotes a lift for $w$; note that the closed subvariety $X(w)$ is independent of the choice of a lift $\widetilde{w}$. The $X(w)$ is called the {\it Schubert variety} corresponding to $w \in W$.
\end{defi}\vspace{2mm}

It is well-known that the Schubert variety $X(w)$ is a normal projective variety of complex dimension $\ell(w)$, where $\ell(w)$ is the length of $w$. Given a dominant integral weight $\lambda$, we define a line bundle $\mathcal{L}_\lambda$ on $G/B$ by \[\mathcal{L}_\lambda \coloneqq (G \times \mathbb{C})/B,\] where $B$ acts on $G \times \mathbb{C}$ on the right as follows: \[(g, c) \cdot b = (g b, \lambda(b) c)\] for $g \in G$, $c \in \mathbb{C}$, and $b \in B$. By restricting this bundle, we obtain a line bundle on $X(w)$, which we denote by the same symbol $\mathcal{L}_\lambda$. Let $V(\lambda)$ be the irreducible highest weight $G$-module with highest weight $\lambda$, $v_\lambda \in V(\lambda)$ the highest weight vector, and $v_{w\lambda} \in V(\lambda)$ the extremal weight vector of weight $w\lambda$ for $w \in W$. Then the {\it Demazure module} $V_w(\lambda)$ corresponding to $w \in W$ is the $B$-submodule of $V(\lambda)$ given by \[V_w(\lambda) \coloneqq \sum_{b \in B} \mathbb{C} b v_{w\lambda}.\] From the Borel-Weil type theorem, we know that the space $H^0(G/B, \mathcal{L}_\lambda)$ (resp., $H^0(X(w), \mathcal{L}_\lambda)$) of global sections is a $G$-module (resp., a $B$-module) isomorphic to the dual module $V(\lambda)^\ast \coloneqq {\rm Hom}_\mathbb{C}(V(\lambda), \mathbb{C})$ (resp., $V_w (\lambda)^\ast \coloneqq {\rm Hom}_\mathbb{C}(V_w(\lambda), \mathbb{C})$). 

\vspace{2mm}\begin{defi}\label{d:order}\normalfont
Define two lexicographic orders $<$ and $\prec$ on $\mathbb{Z}^r$, $r \in \mathbb{Z}_{>0}$, by $(a_1, \ldots, a_r) < (a_1 ^\prime, \ldots, a_r ^\prime)$ (resp., $(a_1, \ldots, a_r) \prec (a_1 ^\prime, \ldots, a_r ^\prime)$) if and only if there exists $1 \le k \le r$ such that $a_1 = a_1 ^\prime, \ldots, a_{k-1} = a_{k-1} ^\prime$, $a_k < a_k ^\prime$ (resp., $a_r = a_r ^\prime, \ldots, a_{k+1} = a_{k+1} ^\prime$, $a_k < a_k ^\prime$). Let $\mathbb{C}(t_1, \ldots, t_r)$ denote the rational function field in $r$ variables. The lexicographic order $<$ on $\mathbb{Z}^r$ induces a total order (denoted by the same symbol $<$) on the set of all monomials in the polynomial ring $\mathbb{C}[t_1, \ldots, t_r]$ as follows: $t_1 ^{a_1} \cdots t_r ^{a_r} < t_1 ^{a_1 ^\prime} \cdots t_r ^{a_r ^\prime}$ if and only if $(a_1, \ldots, a_r) < (a_1 ^\prime, \ldots, a_r ^\prime)$. Let us define two valuations $v^{\rm high}, v^{\rm low} \colon \mathbb{C}(t_1, \ldots, t_r) \setminus \{0\} \rightarrow \mathbb{Z}^r$ by $v^{\rm high}(f/g) \coloneqq v^{\rm high}(f) - v^{\rm high}(g)$, $v^{\rm low}(f/g) \coloneqq v^{\rm low}(f) - v^{\rm low}(g)$ for $f, g \in \mathbb{C}[t_1, \ldots, t_r] \setminus \{0\}$, and by 
\begin{align*}
&v^{\rm high}(f) \coloneqq -(a_1, \ldots, a_r)\ {\rm for}\ f = c t_1 ^{a_1} \cdots t_r ^{a_r} + ({\rm lower\ terms}) \in \mathbb{C}[t_1, \ldots, t_r] \setminus \{0\},\\
&v^{\rm low}(f) \coloneqq (a_1, \ldots, a_r)\ {\rm for}\ f = c t_1 ^{a_1} \cdots t_r ^{a_r} + ({\rm higher\ terms}) \in \mathbb{C}[t_1, \ldots, t_r] \setminus \{0\}, 
\end{align*}
respectively, where $c \in \mathbb{C} \setminus \{0\}$, and by ``lower terms'' (resp., ``higher terms''), we mean a linear combination of monomials smaller (resp., bigger) than $t_1 ^{a_1} \cdots t_r ^{a_r}$ with respect to the total order $<$. Since the total order $<$ on the set of all monomials satisfies $t_1 > \cdots > t_r$, we call the valuation $v^{\rm high}$ (resp., $v^{\rm low}$) on $\mathbb{C}(t_1, \ldots, t_r)$ the {\it highest term valuation} (resp., the {\it lowest term valuation}) with respect to the lexicographic order $t_1 > \cdots > t_r$. Similarly, by using the lexicographic order $\prec$ on $\mathbb{Z}^r$, we define the {\it highest term valuation} $\tilde{v}^{\rm high}$ and the {\it lowest term valuation} $\tilde{v}^{\rm low}$ with respect to the lexicographic order $t_r \succ \cdots \succ t_1$ by
\begin{align*}
&\tilde{v}^{\rm high}(f) \coloneqq -(a_r, \ldots, a_1)\ {\rm for}\ f = c t_1 ^{a_1} \cdots t_r ^{a_r} + ({\rm lower\ terms}) \in \mathbb{C}[t_1, \ldots, t_r] \setminus \{0\},\\
&\tilde{v}^{\rm low}(f) \coloneqq (a_r, \ldots, a_1)\ {\rm for}\ f = c t_1 ^{a_1} \cdots t_r ^{a_r} + ({\rm higher\ terms}) \in \mathbb{C}[t_1, \ldots, t_r] \setminus \{0\}, 
\end{align*}
where $c \in \mathbb{C} \setminus \{0\}$; note that the lexicographic order $\prec$ on $\mathbb{Z}^r$ induces a total order $\prec$ on the set of all monomials satisfying $t_r \succ \cdots \succ t_1$.
\begin{table}[h]
\begin{tabular}{|c||c|c|} \hline
lexicographic order & highest term valuation & lowest term valuation \\ \hline
$t_1 > \cdots > t_r$ & $v^{\rm high}$ & $v^{\rm low}$ \\ \hline
$t_r \succ \cdots \succ t_1$ & $\tilde{v}^{\rm high}$ & $\tilde{v}^{\rm low}$ \\ \hline
\end{tabular}
\end{table}
\end{defi}\vspace{2mm}

\begin{ex}\normalfont
If $r = 3$ and $f = t_1 t_2 + t_3 ^2 \in \mathbb{C}[t_1, t_2, t_3]$, then it follows that $v^{\rm high}(f) = -(1, 1, 0)$, $v^{\rm low}(f) = (0, 0, 2)$, $\tilde{v}^{\rm high}(f) = -(2, 0, 0)$, and $\tilde{v}^{\rm low}(f) = (0, 1, 1)$.
\end{ex}\vspace{2mm}

Let $U^-$ denote the unipotent radical of the opposite Borel subgroup, $U^- _i \subset U^-$ the opposite root subgroup corresponding to an index $i \in I$, and set $\mathfrak{u}^- \coloneqq {\rm Lie}(U^-)$, $\mathfrak{u}^- _i \coloneqq {\rm Lie}(U^- _i) = \mathbb{C} f_i$. We regard $U^-$ as an affine open subset of $G/B$ by: \[U^- \hookrightarrow G/B,\ u \mapsto u \bmod B.\] Consider the set-theoretic intersection $U^- \cap X(w)$ in $G/B$. Since the intersection is an open subset of $X(w)$, it acquires an open subvariety structure from $X(w)$. Remark that this is identical to the closed subvariety structure on $U^- \cap X(w)$ induced from $U^-$, since a reduced subscheme structure on the locally closed subset $U^- \cap X(w) \subset G/B$ is unique. Let ${\bf i} = (i_1, \ldots, i_r) \in I^r$ be a reduced word for $w \in W$. It is well-known that the product map $U_{i_1} ^- \times \cdots \times U_{i_r} ^- \rightarrow U^- \cap X(w)$, $(u_1, \ldots, u_r) \mapsto u_1 \cdots u_r \bmod B$, is a birational morphism (see, for instance, \cite[Part I\hspace{-.1em}I, Chapter 13]{Jan1}); therefore, the function field $\mathbb{C}(X(w)) = \mathbb{C}(U^- \cap X(w))$ is identified with $\mathbb{C}(U_{i_1} ^- \times \cdots \times U_{i_r} ^-)$. By using the isomorphism $\mathbb{C}^r \xrightarrow{\sim} U_{i_1} ^- \times \cdots \times U_{i_r} ^-$ of varieties given by $(t_1, \ldots, t_r) \mapsto (\exp(t_1 f_{i_1}), \ldots, \exp(t_r f_{i_r}))$, we identify the function field $\mathbb{C}(X(w)) = \mathbb{C}(U_{i_1} ^- \times \cdots \times U_{i_r} ^-)$ with the rational function field $\mathbb{C}(t_1, \ldots, t_r)$. Now we define valuations $v_{\bf i} ^{\rm high}, v_{\bf i} ^{\rm low}, \tilde{v}_{\bf i} ^{\rm high}, \tilde{v}_{\bf i} ^{\rm low}$ on $\mathbb{C}(X(w))$ to be $v^{\rm high}, v^{\rm low}, \tilde{v}^{\rm high}, \tilde{v}^{\rm low}$ on $\mathbb{C}(t_1, \ldots, t_r)$, respectively. If we set $w_{\ge k} \coloneqq s_{i_k} s_{i_{k+1}} \cdots s_{i_r}$ and $w_{\le k} \coloneqq s_{i_1} s_{i_2} \cdots s_{i_k}$ for $1 \le k \le r$, then we obtain two sequences of subvarieties of $X(w)$:
\begin{align*}
&X(w_{\ge \bullet}) \colon X(e) \subset X(w_{\ge r}) \subset X(w_{\ge r-1}) \subset \cdots \subset X(w_{\ge 2}) \subset X(w_{\ge 1}) = X(w)\ {\rm and}\\
&X(w_{\le \bullet}) \colon X(e) \subset X(w_{\le 1}) \subset X(w_{\le 2}) \subset \cdots \subset X(w_{\le r-1}) \subset X(w_{\le r}) = X(w),
\end{align*}
where $e \in W$ is the identity element. As discussed in Introduction, we construct two valuations $v_{X(w_{\ge \bullet})}$ and $v_{X(w_{\le \bullet})}$ out of these sequences; note that $X(w_{\ge k})$ (resp., $X(w_{\le k})$) is a normal subvariety of $X(w_{\ge k-1})$ (resp., $X(w_{\le k+1})$) for each $k$. Now it follows immediately that \[v_{\bf i} ^{\rm low} = v_{X(w_{\ge \bullet})}\ {\rm and}\ \tilde{v}_{\bf i} ^{\rm low} = v_{X(w_{\le \bullet})}.\] Consider the left action of $U_{i_1} ^-$ (resp., the right action of $U_{i_r} ^-$) on $U_{i_1} ^- \times \cdots \times U_{i_r} ^-$ given by 
\begin{align*}
u \cdot (u_1, \ldots, u_r) \coloneqq (u u_1, \ldots, u_r)\quad ({\rm resp.,}\ (u_1, \ldots, u_r) \cdot u^\prime \coloneqq (u_1, \ldots, u_r u^\prime))
\end{align*}
for $u_1 \in U_{i_1} ^-, \ldots, u_r \in U_{i_r} ^-$, and $u \in U_{i_1} ^-$ (resp., $u^\prime \in U_{i_r} ^-$); this induces a left action of $\mathfrak{u}_{i_1} ^-$ (resp., a right action of $\mathfrak{u}_{i_r} ^-$) on $\mathbb{C}[t_1, \ldots, t_r] = \mathbb{C}[U_{i_1} ^- \times \cdots \times U_{i_r} ^-]$, which is given by:
\begin{align}\label{derivation}
f_{i_1} \cdot f(t_1, \ldots, t_r) &= - \frac{\partial}{\partial t_1} f(t_1, \ldots, t_r)\\
({\rm resp.,}\ f(t_1, \ldots, t_r) \cdot f_{i_r} &= - \frac{\partial}{\partial t_r} f(t_1, \ldots, t_r))
\end{align}
for $f(t_1, \ldots, t_r) \in \mathbb{C}[t_1, \ldots, t_r]$ (see \cite[\S\S 3.2]{FN} and \cite[Proposition 2.2]{Kav}). 

\vspace{2mm}\begin{prop}[{See \cite[Proposition 3.9]{FN} and \cite[Proof of Theorem 4.1]{Kav}}]\label{valuation Chevalley}
Let $f(t_1, \ldots, t_r) \in \mathbb{C}[t_1, \ldots, t_r]$ be a nonzero polynomial.
\begin{enumerate}
\item[{\rm (1)}] Write $v_{\bf i} ^{\rm high}(f(t_1, \ldots, t_r)) = -(a_1, \ldots, a_r)$. Then the following equalities hold:
\begin{align*}
&a_1 = \max\{a \in \mathbb{Z}_{\ge 0} \mid f_{i_1} ^a \cdot f(t_1, \ldots, t_r) \neq 0\},\\
&a_2 = \max\{a \in \mathbb{Z}_{\ge 0} \mid f_{i_2} ^a \cdot (f_{i_1} ^{a_1} \cdot f(t_1, \ldots, t_r))|_{X(w_{\ge 2})} \neq 0\},\\
&\ \vdots\\
&a_r = \max\{a \in \mathbb{Z}_{\ge 0} \mid f_{i_r} ^a \cdot (\cdots (f_{i_2} ^{a_2} \cdot (f_{i_1} ^{a_1} \cdot f(t_1, \ldots, t_r))|_{X(w_{\ge 2})}) \cdots)|_{X(w_{\ge r})} \neq 0\}.
\end{align*}
\item[{\rm (2)}] Write $\tilde{v}_{\bf i} ^{\rm high}(f(t_1, \ldots, t_r)) = -(a_r ^\prime, \ldots, a_1 ^\prime)$. Then the following equalities hold:
\begin{align*}
&a_r ^\prime = \max\{a \in \mathbb{Z}_{\ge 0} \mid f(t_1, \ldots, t_r) \cdot f_{i_r} ^a \neq 0\},\\
&a_{r-1} ^\prime = \max\{a \in \mathbb{Z}_{\ge 0} \mid (f(t_1, \ldots, t_r) \cdot f_{i_r} ^{a_r ^\prime})|_{X(w_{\le r-1})} \cdot f_{i_{r-1}} ^a \neq 0\},\\
&\ \vdots\\
&a_1 ^\prime = \max\{a \in \mathbb{Z}_{\ge 0} \mid (\cdots ((f(t_1, \ldots, t_r) \cdot f_{i_r} ^{a_r ^\prime})|_{X(w_{\le r-1})} \cdot f_{i_{r-1}} ^{a_{r-1} ^\prime}) \cdots)|_{X(w_{\le 1})} \cdot f_{i_1} ^a \neq 0\}.
\end{align*}
\end{enumerate}
\end{prop}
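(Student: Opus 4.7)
The plan is to prove both parts by induction on the length $r$ of the reduced word ${\bf i}$, translating the Chevalley-action conditions into statements about partial derivatives and coordinate restrictions via \eqref{derivation}. In the base case $r = 1$, a nonzero polynomial $f(t_1) = \sum_{k=0}^{a_1} c_k t_1^k$ with $c_{a_1} \neq 0$ satisfies $v_{\bf i}^{\rm high}(f) = -a_1$, and $f_{i_1}^a \cdot f = (-\partial/\partial t_1)^a f$ is nonzero precisely when $a \le a_1$; the base case for $\tilde v_{\bf i}^{\rm high}$ is identical.

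For the inductive step of part (1), I would write $f = \sum_{k=0}^{a_1} t_1^k c_k(t_2, \ldots, t_r)$ with $c_{a_1} \neq 0$, where $a_1$ is the $t_1$-degree of $f$; the first equality is then immediate from \eqref{derivation}. The key geometric input is that the restriction $\mathbb{C}[X(w)] \to \mathbb{C}[X(w_{\ge 2})]$, expressed in the respective local coordinates $(t_1, \ldots, t_r)$ and $(t_2, \ldots, t_r)$, coincides with the evaluation $t_1 \mapsto 0$: this follows because the birational morphism $U_{i_1}^- \times \cdots \times U_{i_r}^- \to U^- \cap X(w)$ restricted to $\{e\} \times U_{i_2}^- \times \cdots \times U_{i_r}^-$ is exactly the birational morphism $U_{i_2}^- \times \cdots \times U_{i_r}^- \to U^- \cap X(w_{\ge 2})$. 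Granting this,
\begin{equation*}
(f_{i_1}^{a_1} \cdot f)|_{X(w_{\ge 2})} = (-1)^{a_1} a_1!\, c_{a_1}(t_2, \ldots, t_r),
\end{equation*}
and the leading monomial of $c_{a_1}$ under the lex order with $t_2 > \cdots > t_r$ is exactly $c\, t_2^{a_2} \cdots t_r^{a_r}$: any strictly higher monomial in $c_{a_1}$ would force a monomial of $f$ strictly above $c\, t_1^{a_1} t_2^{a_2} \cdots t_r^{a_r}$, contradicting the definition of $v_{\bf i}^{\rm high}(f)$. The inductive hypothesis applied to $c_{a_1}$ on $X(w_{\ge 2})$ with reduced word $(i_2, \ldots, i_r)$ then yields the remaining equalities for $a_2, \ldots, a_r$.

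Part (2) is proved by the mirror-image induction: $a_r'$ is the $t_r$-degree of $f$, the right action of $\mathfrak{u}^-_{i_r}$ acts as $-\partial/\partial t_r$ by \eqref{derivation}, and the restriction to $X(w_{\le r-1})$ corresponds to $t_r \mapsto 0$. The main obstacle is to verify rigorously that these restrictions are indeed given by coordinate substitution; one must argue that the Schubert subvariety $X(w_{\ge 2})$ (resp.\ $X(w_{\le r-1})$) meets the open cell $U^- \cap X(w)$ in a dense open subset whose preimage under the birational parametrization is the coordinate hyperplane $\{t_1 = 0\}$ (resp.\ $\{t_r = 0\}$), so that the induced pullback on rational functions is literal substitution. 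Once this geometric compatibility is settled, both parts reduce to the bookkeeping outlined above.
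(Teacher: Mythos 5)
The paper states this proposition with citations to \cite[Proposition 3.9]{FN} and \cite[Proof of Theorem 4.1]{Kav} rather than supplying its own argument, and your proof is a correct reconstruction of the standard one: induction on $r$, using that the left action of $f_{i_1}$ is $-\partial/\partial t_1$ by \eqref{derivation}, that $a_1$ is the $t_1$-degree of $f$ (since any monomial with larger $t_1$-exponent would beat the lex leading term), that the restriction to $X(w_{\ge 2})$ is $t_1\mapsto 0$ in these Bott--Samelson-type coordinates (a fact the present paper itself uses again in the proof of Proposition \ref{positivity proposition}), and that the leading monomial of the surviving coefficient $c_{a_1}$ in $t_2>\cdots>t_r$ lex order is $t_2^{a_2}\cdots t_r^{a_r}$. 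Part (2) is the mirror image with the right $\mathfrak{u}^-_{i_r}$-action and $t_r\mapsto 0$. This matches the approach of the cited references; the one point you flag as needing rigor — that restriction to $X(w_{\ge 2})$ (resp.\ $X(w_{\le r-1})$) is literal substitution $t_1\mapsto 0$ (resp.\ $t_r\mapsto 0$) — is exactly the compatibility of the product parametrizations $U_{i_1}^-\times\cdots\times U_{i_r}^-\to U^-\cap X(w)$ with their truncations, and is indeed the only geometric input required.
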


\vspace{2mm}\begin{defi}\normalfont\label{Newton-Okounkov convex body}
For a dominant integral weight $\lambda$ and a reduced word ${\bf i} = (i_1, \ldots, i_r) \in I^r$ for $w \in W$, take $v_{\bf i} \in \{v_{\bf i} ^{\rm high}, v_{\bf i} ^{\rm low}, \tilde{v}_{\bf i} ^{\rm high}, \tilde{v}_{\bf i} ^{\rm low}\}$ and $\tau \in H^0(X(w), \mathcal{L}_\lambda) \setminus \{0\}$. Define a subset $S(X(w), \mathcal{L}_\lambda, v_{\bf i}, \tau) \subset \mathbb{Z}_{>0} \times \mathbb{Z}^r$ by \[S(X(w), \mathcal{L}_\lambda, v_{\bf i}, \tau) \coloneqq \bigcup_{k>0} \{(k, v_{\bf i}(\sigma/\tau^k)) \mid \sigma \in H^0(X(w), \mathcal{L}_\lambda ^{\otimes k}) \setminus \{0\}\},\] and denote by $C(X(w), \mathcal{L}_\lambda, v_{\bf i}, \tau) \subset \mathbb{R}_{\ge 0} \times \mathbb{R}^r$ the smallest real closed cone containing $S(X(w), \mathcal{L}_\lambda, v_{\bf i}, \tau)$. Since $v_{\bf i}$ is a valuation, it follows that the subset $S(X(w), \mathcal{L}_\lambda, v_{\bf i}, \tau)$ is a semigroup, and that the real closed cone $C(X(w), \mathcal{L}_\lambda, v_{\bf i}, \tau)$ is convex. Let us define a subset $\Delta(X(w), \mathcal{L}_\lambda, v_{\bf i}, \tau) \subset \mathbb{R}^r$ by \[\Delta(X(w), \mathcal{L}_\lambda, v_{\bf i}, \tau) \coloneqq \{{\bf a} \in \mathbb{R}^r \mid (1, {\bf a}) \in C(X(w), \mathcal{L}_\lambda, v_{\bf i}, \tau)\};\]
this is called the {\it Newton-Okounkov body} of $X(w)$ associated with $\mathcal{L}_\lambda$, $v_{\bf i}$, and $\tau$.
\end{defi}\vspace{2mm}

As we will see in Sections 3 and 5, the Newton-Okounkov body $\Delta(X(w), \mathcal{L}_\lambda, v_{\bf i}, \tau)$ is indeed a rational convex polytope. Hence it is also called a {\it Newton-Okounkov polytope}.

\vspace{2mm}\begin{rem}\normalfont\label{independence}
If we take another section $\tau^\prime \in H^0 (X(w), \mathcal{L}_\lambda) \setminus \{0\}$, then $S(X(w), \mathcal{L}_\lambda, v_{\bf i}, \tau^\prime)$ is the shift of $S(X(w), \mathcal{L}_\lambda, v_{\bf i}, \tau)$ by $k v_{\bf i}(\tau/\tau^\prime)$ in $\{k\} \times \mathbb{Z}^r$ for $k \in \mathbb{Z}_{>0}$, that is, \[S(X(w), \mathcal{L}_\lambda, v_{\bf i}, \tau^\prime) \cap (\{k\} \times \mathbb{Z}^r) = S(X(w), \mathcal{L}_\lambda, v_{\bf i}, \tau) \cap (\{k\} \times \mathbb{Z}^r) + (0, k v_{\bf i}(\tau/\tau^\prime)).\] Hence it follows that $\Delta(X(w), \mathcal{L}_\lambda, v_{\bf i}, \tau^\prime) = \Delta(X(w), \mathcal{L}_\lambda, v_{\bf i}, \tau) + v_{\bf i}(\tau/\tau^\prime)$. Thus, the Newton-Okounkov body $\Delta(X(w), \mathcal{L}_\lambda, v_{\bf i}, \tau)$ does not essentially depend on the choice of $\tau \in H^0 (X(w), \mathcal{L}_\lambda) \setminus \{0\}$; hence it is also denoted simply by $\Delta(X(w), \mathcal{L}_\lambda, v_{\bf i})$.
\end{rem}\vspace{2mm}

\begin{ex}\normalfont\label{main example}
Let $G = SL_3(\mathbb{C})$ (of type $A_2$), $I = \{1, 2\}$, ${\bf i} = (1, 2, 1)$ a reduced word for the longest element $w_0$ of $W$, and $\lambda = \alpha_1 + \alpha_2$. Then the Schubert variety $X(w_0)$ is identical to the full flag variety $G/B$. Recall that the coordinate ring $\mathbb{C}[U^-]$ is regarded as a $\mathbb{C}$-subalgebra of the polynomial ring $\mathbb{C}[t_1, t_2, t_3]$ by using the birational morphism
\[\mathbb{C}^3 \rightarrow U^-,\ (t_1, t_2, t_3) \mapsto \exp(t_1 f_1)\exp(t_2 f_2)\exp(t_3 f_1).\] Since we have \[\exp(t_1 f_1)\exp(t_2 f_2)\exp(t_3 f_1) =
\begin{pmatrix}
1 & 0 & 0\\
t_1 + t_3 & 1 & 0 \\
t_2 t_3 & t_2 &1
\end{pmatrix},\] the coordinate ring $\mathbb{C}[U^-]$ is identical to the $\mathbb{C}$-subalgebra $\mathbb{C}[t_1 + t_3, t_2, t_2 t_3]$ of $\mathbb{C}[t_1, t_2, t_3]$. In addition, by standard monomial theory (see, for instance, \cite[Section 2]{Ses}), we deduce that for a specific section $\tau_\lambda \in H^0(G/B, \mathcal{L}_\lambda)$, the $\mathbb{C}$-subspace $\{\sigma/\tau_\lambda \mid \sigma \in H^0(G/B, \mathcal{L}_\lambda)\}$ of $\mathbb{C}(U^-)$ is spanned by \[\{1, t_1 + t_3, t_2, t_1 t_2, t_2 t_3, t_1 t_2 (t_1 + t_3), t_2 ^2 t_3, t_1 t_2 ^2 t_3\}.\] Now we obtain the following list.

\begin{table}[h]
\begin{tabular}{|c||c|c|c|c|c|c|c|c|} \hline
Valuation & $1$ & $t_1 + t_3$ & $t_2$ & $t_1 t_2$ & $t_2 t_3$ & $t_1 t_2 (t_1 + t_3)$ & $t_2 ^2 t_3$ & $t_1 t_2 ^2 t_3$\\ \hline
$v_{\bf i} ^{\rm high}$ & $(0, 0, 0)$ & $-(1, 0, 0)$ & $-(0, 1, 0)$ & $-(1, 1, 0)$ & $-(0, 1, 1)$ & $-(2, 1, 0)$ & $-(0, 2, 1)$ & $-(1, 2, 1)$\\ \hline
$v_{\bf i} ^{\rm low}$ & $(0, 0, 0)$ & $\ \ (0, 0, 1)$ & $\ \ (0, 1, 0)$ & $\ \ (1, 1, 0)$ & $\ \ (0, 1, 1)$ & $\ \ (1, 1, 1)$ & $\ \ (0, 2, 1)$ & $\ \ (1, 2, 1)$\\ \hline
$\tilde{v}_{\bf i} ^{\rm high}$ & $(0, 0, 0)$ & $-(1, 0, 0)$ & $-(0, 1, 0)$ & $-(0, 1, 1)$ & $-(1, 1, 0)$ & $-(1, 1, 1)$ & $-(1, 2, 0)$ & $-(1, 2, 1)$\\ \hline
$\tilde{v}_{\bf i} ^{\rm low}$ & $(0, 0, 0)$ & $\ \ (0, 0, 1)$ & $\ \ (0, 1, 0)$ & $\ \ (0, 1, 1)$ & $\ \ (1, 1, 0)$ & $\ \ (0, 1, 2)$ & $\ \ (1, 2, 0)$ & $\ \ (1, 2, 1)$\\ \hline
\end{tabular}
\end{table}

\noindent For $v_{\bf i} \in \{v_{\bf i} ^{\rm high}, v_{\bf i} ^{\rm low}, \tilde{v}_{\bf i} ^{\rm high}, \tilde{v}_{\bf i} ^{\rm low}\}$, the Newton-Okounkov body $\Delta(G/B, \mathcal{L}_\lambda, v_{\bf i}, \tau_\lambda)$ is identical to the convex hull of the corresponding eight points in the list above; see the figures 1--4.

\begin{figure}[h]
\begin{minipage}[b]{0.45\linewidth}
\begin{center}
\includegraphics[width=2.0cm]{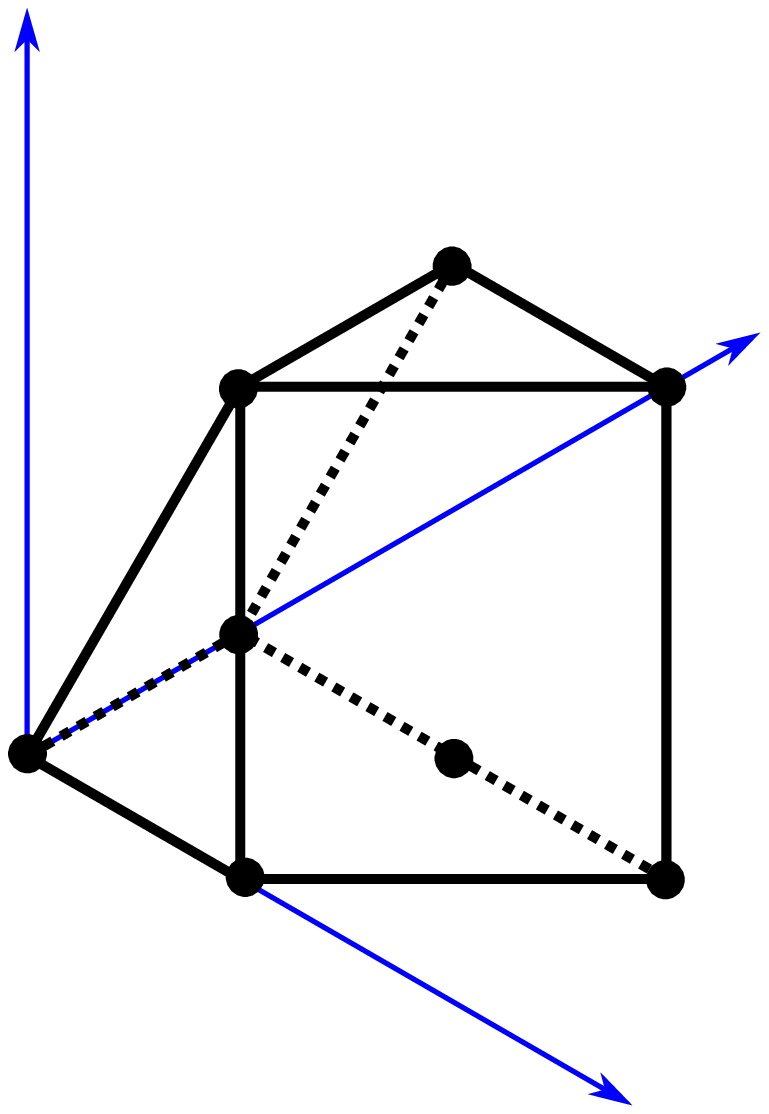}
\caption{$-\Delta(G/B, \mathcal{L}_\lambda, v_{\bf i} ^{\rm high}, \tau_\lambda)$}
\end{center}
\end{minipage}
\begin{minipage}[b]{0.45\linewidth}
\begin{center}
\includegraphics[width=2.0cm]{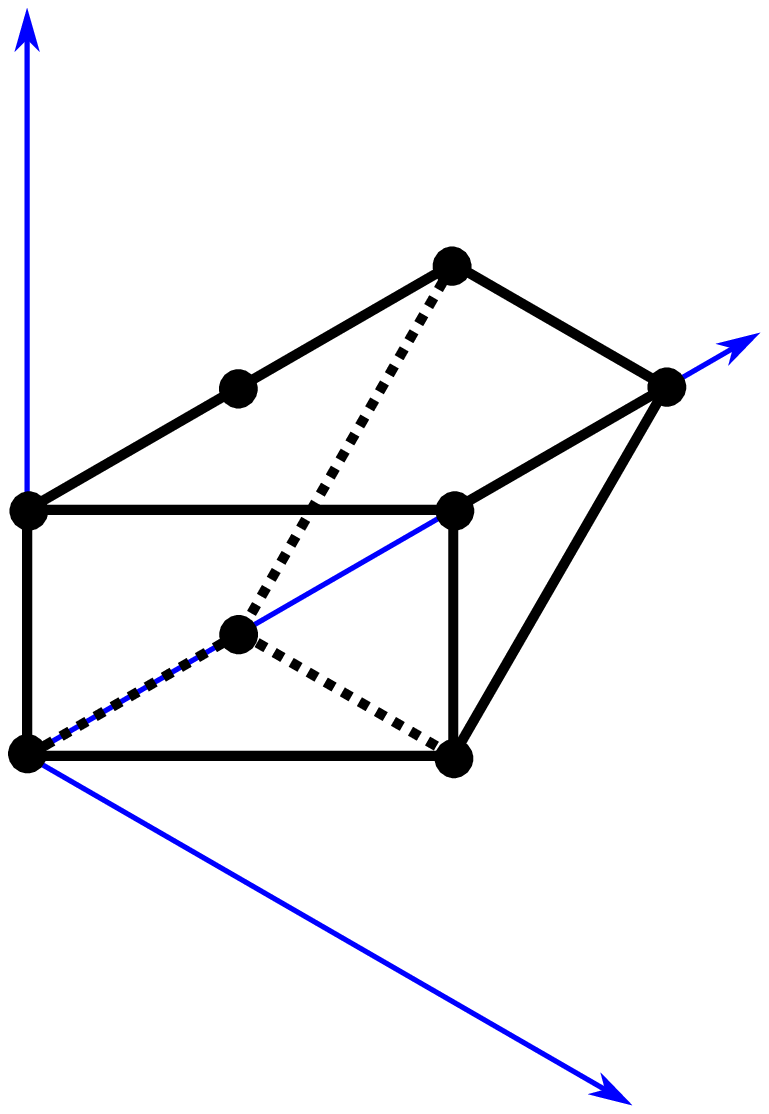}
\caption{$\Delta(G/B, \mathcal{L}_\lambda, v_{\bf i} ^{\rm low}, \tau_\lambda)$}
\end{center}
\end{minipage}\\\vspace{4mm}
\begin{minipage}[b]{0.45\linewidth}
\begin{center}
\includegraphics[width=2.0cm]{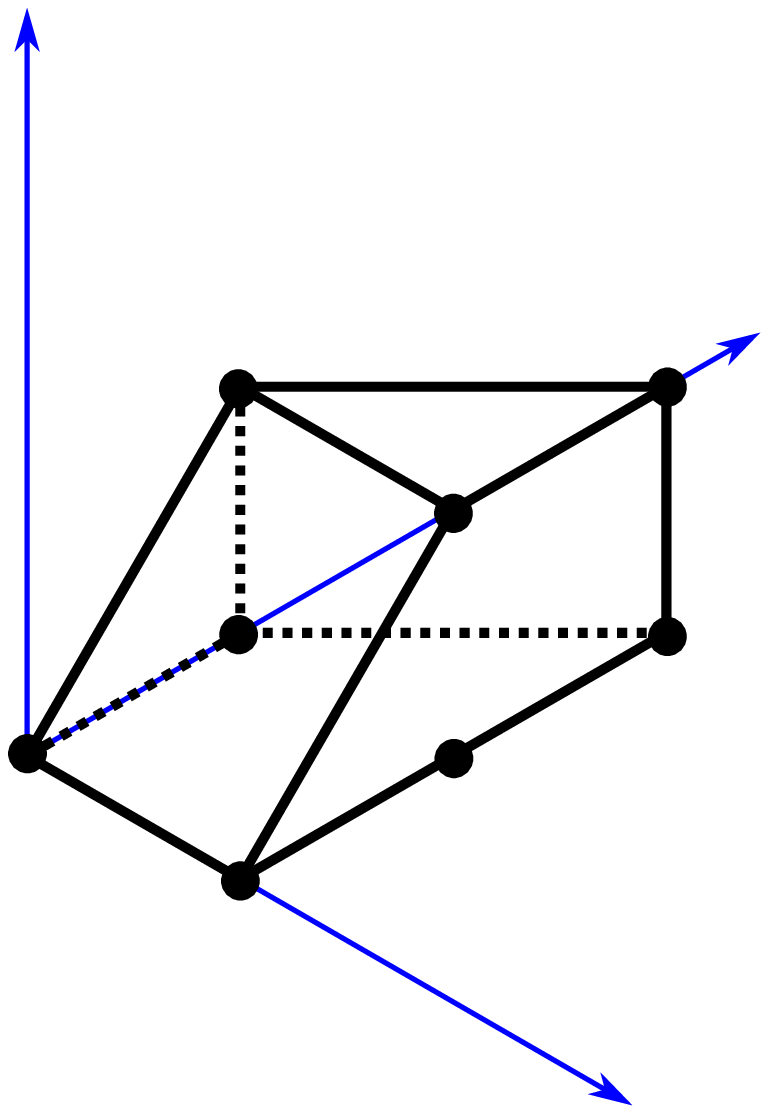}
\caption{$-\Delta(G/B, \mathcal{L}_\lambda, \tilde{v}_{\bf i} ^{\rm high}, \tau_\lambda)$}
\end{center}
\end{minipage}
\begin{minipage}[b]{0.45\linewidth}
\begin{center}
\includegraphics[width=2.0cm]{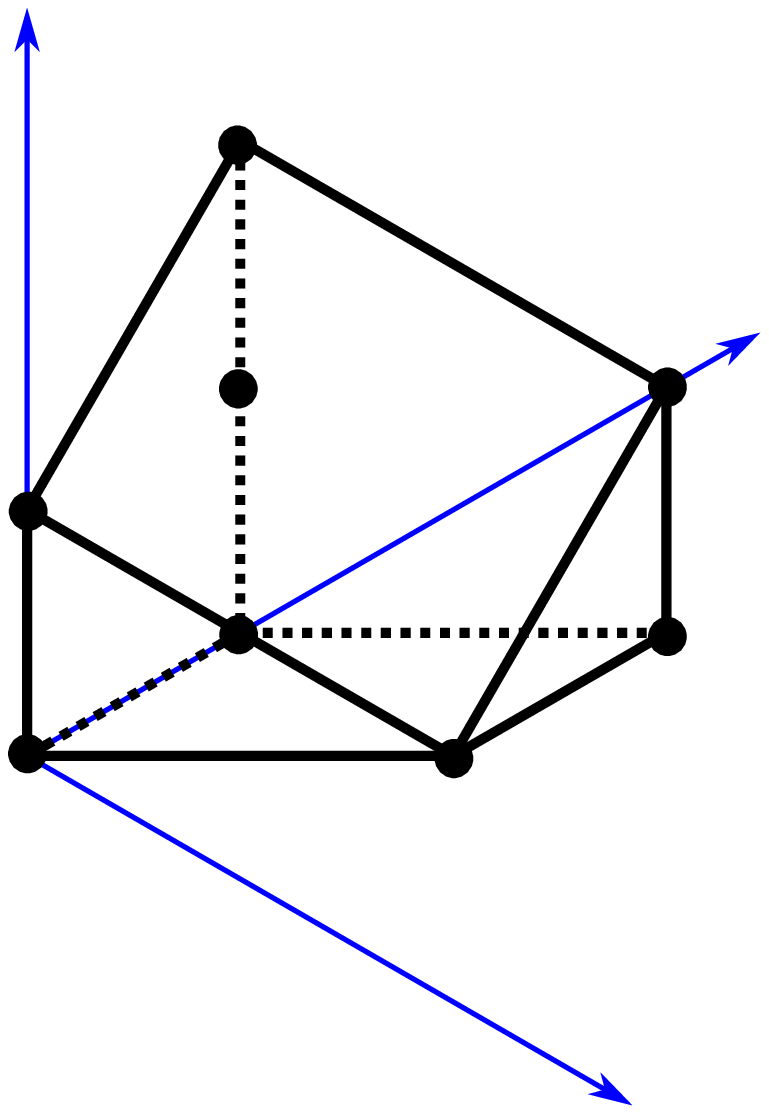}
\caption{$\Delta(G/B, \mathcal{L}_\lambda, \tilde{v}_{\bf i} ^{\rm low}, \tau_\lambda)$}
\end{center}
\end{minipage}
\end{figure}

\noindent Hence we deduce that
\begin{align*}
&\Delta(G/B, \mathcal{L}_\lambda, v_{\bf i} ^{\rm low}, \tau_\lambda) = -\Delta(G/B, \mathcal{L}_\lambda, \tilde{v}_{\bf i} ^{\rm high}, \tau_\lambda)^{\rm op},\ {\rm and}\\
&\Delta(G/B, \mathcal{L}_\lambda, \tilde{v}_{\bf i} ^{\rm low}, \tau_\lambda) = -\Delta(G/B, \mathcal{L}_\lambda, v_{\bf i} ^{\rm high}, \tau_\lambda)^{\rm op},
\end{align*}
where we write $H^{\rm op} \coloneqq \{(a_3, a_2, a_1) \mid (a_1, a_2, a_3) \in H\}$ for a subset $H \subset \mathbb{R}^3$. Our main result (Corollary \ref{corollary of main result}) states that these coincidences of Newton-Okounkov bodies hold also for arbitrary $G$, ${\bf i}$, and $\lambda$; only restriction is that we need to take a specific section $\tau_\lambda$.
\end{ex}

\section{Kashiwara crystal bases and perfect bases}

In this section, we first review the definitions and properties of perfect bases of the space $H^0(G/B, \mathcal{L}_\lambda)\simeq V(\lambda)^{\ast}$ of global sections and the coordinate ring $\mathbb{C}[U^-]$. They are convenient tools for calculating the Newton-Okounkov bodies of Schubert varieties. Next we review the main results of \cite{FN} and \cite{Kav}. 

Let $P \subset \mathfrak{t}^\ast$ be the weight lattice of $\mathfrak{g}$, and $P_+ \subset P$ the set of dominant integral weights. For $\lambda\in P_+$ and $\mu\in P$, set 
\[
V(\lambda)_{\mu} \coloneqq \{v\in V(\lambda) \mid h \cdot v = \langle \mu, h\rangle v\ \text{for all } h\in\mathfrak{t}\}. 
\]
The action of $\mathfrak{g}$ on the dual space $V(\lambda)^{\ast}$, $\lambda\in P_+$, is given by $\langle x\cdot f, v\rangle:=-\langle f, x\cdot v\rangle$ for $f\in V(\lambda)^{\ast}$, $x\in\mathfrak{g}$, and $v \in V(\lambda)$, where $\langle \cdot, \cdot \rangle \colon V(\lambda)^{\ast} \times V(\lambda) \rightarrow \mathbb{C}$ is the canonical pairing. Since $V(\lambda)=\bigoplus_{\mu\in P}V(\lambda)_{\mu}$, the dual space $V(\lambda)_{\mu}^{\ast} \coloneqq (V(\lambda)_{\mu})^{\ast}$ is regarded as a subspace of $V(\lambda)^{\ast}$. For $i \in I$ and $f \in V(\lambda)^{\ast}\setminus\{0\}$, set 
\[
\varepsilon_i (f)\coloneqq \max\{k\in \mathbb{Z}_{\geq 0}\mid f_i^{k} \cdot f\neq 0\}. 
\]
Let $\varepsilon_i(f)\coloneqq -\infty$ for $f = 0 \in V(\lambda)^{\ast}$. For $i\in I$ and $k\in\mathbb{Z}_{\geq 0}$, set
\[
(V(\lambda)^{\ast})^{<k, i}\coloneqq \{f\in V(\lambda)^{\ast}\mid \varepsilon_i (f)<k\}. 
\]

\vspace{2mm}\begin{defi}[{See \cite[Definition 5.30]{BK} and \cite[Definition 2.5]{KOP1}}]\label{definition of perfect bases for modules}\normalfont
Let $\lambda\in P_+$. A $\mathbb{C}$-basis ${\bf B}^{\rm up}(\lambda) \subset V(\lambda)^{\ast}$ is said to be {\it perfect} if the following conditions hold:
\begin{enumerate}
\item[{\rm (i)}] ${\bf B}^{\rm up}(\lambda) = \coprod_{\mu\in P} {\bf B}^{\rm up}(\lambda)_{\mu}$, where ${\bf B}^{\rm up}(\lambda)_{\mu} \coloneqq {\bf B}^{\rm up}(\lambda) \cap V(\lambda)_{\mu}^{\ast}$,
\item[{\rm (ii)}] ${\bf B}^{\rm up}(\lambda)_{\lambda} = \{\tau_{\lambda}\}$, where $\langle \tau_{\lambda}, v_{\lambda}\rangle=1$,
\item[{\rm (iii)}] for $i \in I$ and $\tau \in {\bf B}^{\rm up}(\lambda)$ with $f_i \cdot \tau \neq 0$, there exists a unique element $\tilde{e}_i (\tau) \in {\bf B}^{\rm up}(\lambda)$ such that 
\[f_i \cdot \tau \in \mathbb{C}^\times \tilde{e}_i(\tau)+ (V(\lambda)^{\ast})^{<\varepsilon_i(\tau)-1, i},\] 
where $\mathbb{C}^\times \coloneqq \mathbb{C} \setminus \{0\}$,
\item[{\rm (iv)}] if $\tilde{e}_i(\tau)=\tilde{e}_i(\tau')$ for $\tau, \tau' \in {\bf B}^{\rm up}(\lambda)$ and some $i \in I$, then we have $\tau = \tau'$. 
\end{enumerate} 
\end{defi}\vspace{2mm}

Next we review the definition of a perfect basis of $\mathbb{C}[U^-]$. Let $U(\mathfrak{u}^-)$ be the universal enveloping algebra of $\mathfrak{u}^-$. The algebra $U(\mathfrak{u}^-)$ has a Hopf algebra structure given by the following coproduct $\Delta$, counit $\varepsilon$, and antipode $S$:
\[\Delta(f_i) = f_i \otimes 1 + 1 \otimes f_i,\ \varepsilon(f_i) = 0,\ {\rm and}\ S(f_i) = -f_i\]
for $i \in I$. We can regard $U(\mathfrak{u}^-)$ as a multigraded $\mathbb{C}$-algebra: \[U(\mathfrak{u}^-) = \bigoplus_{{\bf d} \in \mathbb{Z}^I _{\ge 0}} U(\mathfrak{u}^-)_{\bf d},\] where the homogeneous component $U(\mathfrak{u}^-)_{\bf d}$ for ${\bf d} = (d_i)_{i \in I} \in \mathbb{Z}^I _{\ge 0}$ is defined to be the $\mathbb{C}$-subspace of $U(\mathfrak{u}^-)$ spanned by elements $f_{j_1} \cdots f_{j_{|{\bf d}|}}$ for which the cardinality of $\{1 \le k \le |{\bf d}| \mid j_k = i\}$ is equal to $d_i$ for all $i \in I$; here we set $|{\bf d}| \coloneqq \sum_{i \in I} d_i$. Let \[U(\mathfrak{u}^-)^\ast _{\rm gr} = \bigoplus_{{\bf d} \in \mathbb{Z}^I _{\ge 0}}U(\mathfrak{u}^-)^\ast _{{\rm gr}, {\bf d}} \coloneqq \bigoplus_{{\bf d} \in \mathbb{Z}^I _{\ge 0}} {\rm Hom}_\mathbb{C} (U(\mathfrak{u}^-)_{\bf d}, \mathbb{C})\] be the graded dual of $U(\mathfrak{u}^-)$ endowed with the dual Hopf algebra structure. Note that the coordinate ring $\mathbb{C}[U^-]$ also has a Hopf algebra structure given by the following coproduct $\Delta$, counit $\varepsilon$, and antipode $S$: \[\Delta(f) ((u_1, u_2)) = f(u_1 u_2),\ \varepsilon(f) = f(e),\ {\rm and}\ S(f) (u) = f(u^{-1})\] for $f \in \mathbb{C}[U^-]$ and $u, u_1, u_2 \in U^-$, where $e \in U^-$ is the identity element. It is well-known that this Hopf algebra $\mathbb{C}[U^-]$ is isomorphic to the dual Hopf algebra $U(\mathfrak{u}^-)_{\rm gr} ^\ast$ (see, for instance, \cite[Proposition 5.1]{GLS}).
Let $\langle \cdot, \cdot \rangle \colon U(\mathfrak{u}^-)^\ast _{\rm gr} \times U(\mathfrak{u}^-) \rightarrow \mathbb{C}$ denote the canonical pairing. Define a $U(\mathfrak{u}^-)$-bimodule structure on $U(\mathfrak{u}^-)^\ast _{\rm gr}$ by 
\begin{align*}
&\langle x \cdot \rho, y \rangle \coloneqq - \langle \rho, x \cdot y \rangle,\ {\rm and}\\
&\langle \rho \cdot x, y \rangle \coloneqq - \langle \rho, y \cdot x \rangle
\end{align*}
for $x \in \mathfrak{u}^-$, $\rho \in U(\mathfrak{u}^-)^\ast _{\rm gr}$, and $y \in U(\mathfrak{u}^-)$. Also, the coordinate ring $\mathbb{C}[U^-]$ has a natural $U^-$-bimodule structure, which is given by 
\begin{align*}
&(u_1 \cdot f)(u_2) \coloneqq f(u_1 ^{-1} u_2),\ {\rm and}\\
&(f \cdot u_1)(u_2) \coloneqq f(u_2 u_1 ^{-1})
\end{align*}
for $u_1, u_2 \in U^-$ and $f \in \mathbb{C}[U^-]$. This induces a $U(\mathfrak{u}^-)$-bimodule structure on $\mathbb{C}[U^-]$. Note that the isomorphism of Hopf algebras $U(\mathfrak{u}^-)_{\rm gr} ^\ast \simeq \mathbb{C}[U^-]$ is compatible with the $U(\mathfrak{u}^-)$-bimodule structures. Henceforth we will identify  $U(\mathfrak{u}^-)_{\rm gr} ^\ast$ with $\mathbb{C}[U^-]$. Define a $\mathbb{C}$-algebra anti-involution $\ast$ on $U(\mathfrak{u}^-)$ by $f_i ^\ast \coloneqq f_i$ for all $i \in I$. This map is a $\mathbb{C}$-coalgebra involution; hence it induces a $\mathbb{C}$-algebra involution on $U(\mathfrak{u}^-)_{\rm gr} ^\ast = \mathbb{C}[U^-]$ (also denoted by $\ast$). For $i\in I$ and $f\in \mathbb{C}[U^-]\setminus\{0\}$, set 
\[
\varepsilon_i (f)\coloneqq \max\{k\in \mathbb{Z}_{\geq 0}\mid f_i^{k} \cdot f\neq 0\}. 
\]
Let $\varepsilon_i(f)\coloneqq -\infty$ for $f = 0 \in \mathbb{C}[U^-]$. For $i\in I$ and $k\in\mathbb{Z}_{\geq 0}$, set
\[
\mathbb{C}[U^-]^{<k, i}\coloneqq \{f\in \mathbb{C}[U^-]\mid \varepsilon_i (f)<k\}. 
\]

\vspace{2mm}\begin{defi}[{See \cite[Definition 5.30]{BK} and \cite[Definition 4.5]{KOP2}}]\label{definition of perfect bases}\normalfont
A $\mathbb{C}$-basis ${\bf B}^{\rm up} \subset \mathbb{C}[U^-]=U(\mathfrak{u}^-)^\ast _{\rm gr}$ is said to be {\it perfect} if the following conditions hold:
\begin{enumerate}
\item[{\rm (i)}] ${\bf B}^{\rm up} = \coprod_{{\bf d} \in \mathbb{Z}^I _{\ge 0}} {\bf B}^{\rm up} _{\bf d}$, where ${\bf B}^{\rm up} _{\bf d} \coloneqq {\bf B}^{\rm up} \cap U(\mathfrak{u}^-)^\ast _{{\rm gr}, {\bf d}}$,
\item[{\rm (ii)}] ${\bf B}^{\rm up} _{(0,\dots, 0)} = \{\tau_{\infty}\}$, where $\langle \tau_{\infty}, 1\rangle=1$,
\item[{\rm (iii)}] for $i \in I$ and $\tau \in {\bf B}^{\rm up}$ with $f_i \cdot \tau \neq 0$, there exists a unique element $\tilde{e}_i(\tau) \in {\bf B}^{\rm up}$ such that 
\[f_i \cdot \tau \in \mathbb{C}^\times \tilde{e}_i(\tau)+ \mathbb{C}[U^-]^{<\varepsilon_i(\tau)-1, i},\] 
\item[{\rm (iv)}] if $\tilde{e}_i(\tau)=\tilde{e}_i(\tau')$ for $\tau, \tau' \in {\bf B}^{\rm up}$ and some $i \in I$, then we have $\tau = \tau'$. 
\end{enumerate} 
Moreover, in this paper, we always impose the following $\ast$-stable condition on a perfect basis:
\begin{enumerate}
\item[{\rm (v)}] $({\bf B}^{\rm up})^{\ast}={\bf B}^{\rm up}$.
\end{enumerate}
\end{defi}\vspace{2mm}

We list some examples of perfect bases here. In particular, Example \ref{e:perfect_KLR} is extremely important in this paper. See also Proposition \ref{existence with positivity}.

\vspace{2mm}\begin{ex}\label{e:upper global}\normalfont
The \emph{upper global bases}  ($=$ the \emph{dual canonical bases}) of $V(\lambda)^{\ast}$, $\lambda\in P_+$, and $\mathbb{C}[U^-]=U(\mathfrak{u}^-)^\ast _{\rm gr}$ are typical examples of perfect bases. They are the dual bases of the \emph{lower global bases} ($=$ the \emph{canonical bases}), introduced by Lusztig \cite{Lus_can, Lus_quivers, Lus1} and Kashiwara \cite{Kas1,Kas2} via quantized enveloping algebras associated with $\mathfrak{g}$. See \cite[Proposition 5.3.1]{Kas3}, \cite[Theorem 2.1.1]{Kas4} (and also \cite[Proposition 2.8]{FN}) for the perfectness. The upper global bases are denoted by $\{G_{\lambda}^{\rm up}(b) \mid b \in \mathcal{B}(\lambda)\} \subset V(\lambda)^\ast$ and $\{G^{\rm up}(b) \mid b \in \mathcal{B}(\infty)\} \subset \mathbb{C}[U^-]$, respectively. 
\end{ex}

\vspace{2mm}\begin{ex}\normalfont
When $\mathfrak{g}$ is simply-laced, Lusztig \cite{Lus2} constructed a specific $\mathbb{C}$-basis of $U(\mathfrak{u}^-)$, called the {\it semicanonical basis}. The dual basis of the semicanonical basis, called the \emph{dual semicanonical basis}, is a perfect basis by \cite[Proof of Lemma 2.4 and Section 3]{Lus2}.
\end{ex}\vspace{2mm}

\begin{ex}\label{e:perfect_KLR}\normalfont
Khovanov-Lauda \cite{KL1, KL2} and Rouquier \cite{Rou} introduced the family $\{R_{\bf d} \mid {\bf d} \in \mathbb{Z}^I _{\ge 0}\}$ of $\mathbb{Z}$-graded algebras, called {\it Khovanov-Lauda-Rouquier algebras} or {\it quiver Hecke algebras}, which categorifies the negative half $U_q(\mathfrak{u}^-)$ of the quantized enveloping algebra. To be precise, the direct sum 
$\bigoplus_{{\bf d} \in \mathbb{Z}^I _{\ge 0}}G_0(R_{\bf d}\text{-}{\rm gmod})$ of the Grothendieck groups of finite dimensional graded $R_{\bf d}$-modules has the algebra structure which comes from the induction functor \cite[Proposition 3.1]{KL1}. Moreover the resulting $\mathbb{Z}[q^{\pm 1}]$-algebra (the action of $q$ is induced from the grading shift functor) is isomorphic to a certain $\mathbb{Z}[q^{\pm 1}]$-form $U_{q, \mathbb{Z}}(\mathfrak{u}^-)$ of $U_q(\mathfrak{u}^-)$, which is isomorphic to $\mathbb{C}[U^-]$ if we apply the functor $\mathbb{C}\otimes_{\mathbb{Z}[q^{\pm 1}]} -$ with $q\mapsto 1$ to it (this process is called the \emph{specialization at $q=1$}) \cite[Proposition 3.4 and Theorem 3.17]{KL1} (see also the diagram written before \cite[Lemma 5.3]{KOP2}). The free $\mathbb{Z}[q^{\pm 1}]$-module $\bigoplus_{{\bf d} \in \mathbb{Z}^I _{\ge 0}}G_0(R_{\bf d}\text{-}{\rm gmod})$ has the basis consisting of the classes of self-dual graded simple modules. Here we call this basis the {\it KLR-basis}. Then the specialization of the KLR-basis at $q=1$ is known to be a perfect basis \cite[Lemmas 3.13 and 5.3]{KOP2} (cf.~\cite[\S\S 2.5.1]{LV}). The property (v) holds because the involution $\ast$ is induced from the twist of $R_{\bf d}$-modules by the involutive automorphism $\sigma$ of $R_{\bf d}$ in \cite[\S\S 2.1]{KL1} (see also \cite[Section 12]{McN}). 
\end{ex}\vspace{2mm}

The index set $\mathcal{B}(\lambda)$ (resp., $\mathcal{B}(\infty)$) of the upper global basis in Example \ref{e:upper global} has the following additional structure, called a \emph{crystal} structure \cite[Section 1]{Kas4}: maps $\varepsilon_i, \varphi_i\colon \mathcal{B}(\lambda) \to \mathbb{Z}$ (resp., $\mathcal{B}(\infty) \to \mathbb{Z}$) and $\tilde{e}_i, \tilde{f}_i \colon \mathcal{B}(\lambda) \to \mathcal{B}(\lambda) \cup \{0\}$ (resp., $\mathcal{B}(\infty) \to \mathcal{B}(\infty) \cup \{0\}$) for $i \in I$. We do not review here the precise definitions of these crystals (see \cite{Kas5} for a survey on this topic). Instead, we explain the definition of the crystals associated with perfect bases. In fact, it is known that they are isomorphic to $\mathcal{B}(\lambda)$ and $\mathcal{B}(\infty)$ as crystals (Proposition \ref{p:uniqueness}). Let ${\bf B}^{\rm up}(\lambda)$ (resp., ${\bf B}^{\rm up}$) be a perfect basis of $V(\lambda)^\ast$, $\lambda\in P_+$ (resp., $\mathbb{C}[U^-]$). For $i\in I$ and $\tau \in {\bf B}^{\rm up}(\lambda)_{\mu}$ (resp., $\tau \in {\bf B}^{\rm up}_{\bf d}$), set 
\begin{align*}
\wt(\tau)&\coloneqq\mu\ (\text{resp.,}\ -\sum\nolimits_{i\in I}d_i\alpha_i),  
&
\varphi_i(\tau)&\coloneqq \varepsilon_i(\tau)+\langle \wt (\tau), h_i\rangle,&
\tilde{f}_i(\tau)&\coloneqq
\begin{cases}
\tau'&\text{if}\ \tilde{e}_i(\tau')=\tau,\\
0&\text{otherwise}. 
\end{cases}
\end{align*}
Then the sextuple $({\bf B}^{\rm up}(\lambda); \wt, \{\varepsilon_i\}_{i}, \{\varphi_i\}_{i}, \{\tilde{e}_i\}_{i}, \{\tilde{f}_i\}_{i})$ (resp.,~$({\bf B}^{\rm up}; \wt, \{\varepsilon_i\}_{i}, \{\varphi_i\}_{i}, \{\tilde{e}_i\}_{i}, \{\tilde{f}_i\}_{i})$) satisfies the axioms of crystal. 

\vspace{2mm}\begin{prop}[{\cite[Main Theorem 5.37]{BK}}]\label{p:uniqueness}
The following hold:
\begin{enumerate}
\item[{\rm (1)}] For $\lambda\in P_+$, the crystal $({\bf B}^{\rm up}(\lambda); \wt, \{\varepsilon_i\}_{i}, \{\varphi_i\}_i, \{\tilde{e}_i\}_i, \{\tilde{f}_i\}_i)$ is canonically isomorphic to the crystal $(\mathcal{B}(\lambda); \wt, \{\varepsilon_i\}_i, \{\varphi_i\}_i, \{\tilde{e}_i\}_i, \{\tilde{f}_i\}_i)$, that is, there exists a unique bijection ${\bf B}^{\rm up}(\lambda)\xrightarrow{\sim} \mathcal{B}(\lambda)$ that commutes with the maps $\{\tilde{e}_i\mid i\in I\}$, $\{\tilde{f}_i\mid i\in I\}$, and preserves the values of $\wt$, $\{\varepsilon_i \mid i \in I\}$, $\{\varphi_i\mid i\in I\}$. 
\item[{\rm (2)}] The crystal $({\bf B}^{\rm up}; \wt, \{\varepsilon_i\}_i, \{\varphi_i\}_i, \{\tilde{e}_i\}_i, \{\tilde{f}_i\}_i)$ is canonically isomorphic to the crystal $(\mathcal{B}(\infty); \wt,$ $\{\varepsilon_i\}_i, \{\varphi_i\}_i, \{\tilde{e}_i\}_i, \{\tilde{f}_i\}_i)$. 
\end{enumerate}
\end{prop}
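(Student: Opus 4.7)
The plan is to reduce Proposition \ref{p:uniqueness} to a direct comparison with a reference perfect basis, namely the upper global basis of Example \ref{e:upper global}, which by its very construction is indexed by $\mathcal{B}(\lambda)$ (resp.\ $\mathcal{B}(\infty)$). Thus it suffices to prove that any two perfect bases of $V(\lambda)^{\ast}$ (resp.\ $\mathbb{C}[U^-]$) give rise to canonically isomorphic crystals.

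First I would verify that the sextuple associated with a perfect basis genuinely satisfies Kashiwara's axioms of a crystal. The identity $\varphi_i=\varepsilon_i+\langle\wt,h_i\rangle$ is imposed by definition; the weight shift $\wt(\tilde{e}_i\tau)=\wt(\tau)+\alpha_i$ and the value shift $\varepsilon_i(\tilde{e}_i\tau)=\varepsilon_i(\tau)-1$ follow from axiom (iii), since $f_i$ carries weight $-\alpha_i$ and $\tilde{e}_i(\tau)$ appears as the leading term of $f_i\cdot\tau$ modulo the subspace of smaller $\varepsilon_i$-filtration; axiom (iv) then makes $\tilde{f}_i$ a well-defined partial inverse to $\tilde{e}_i$.

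Next I would construct the canonical bijection to the reference perfect basis inductively. For Part (1), proceed by induction on the height of $\lambda-\wt(\tau)$: axiom (ii) forces the assignment $\tau_\lambda\mapsto G_\lambda^{\rm up}(b_\lambda)$; for the inductive step, fix any $i$ with $\varepsilon_i(\tau)>0$, match $\tilde{e}_i\tau$ by induction with some $G_\lambda^{\rm up}(b')$, and send $\tau$ to $G_\lambda^{\rm up}(\tilde{f}_i b')$. The nontrivial point is independence of the choice of $i$, which I would settle by a triangular change-of-basis argument: expanding $\tau=\sum_{b\in\mathcal{B}(\lambda)} c_b G_\lambda^{\rm up}(b)$ and applying the perfect-basis axioms to both sides forces exactly one $c_b$ to dominate at the top of the $\varepsilon_i$-filtration, which identifies $\tau$ unambiguously with an element of $\mathcal{B}(\lambda)$ and automatically makes the resulting bijection commute with all $\tilde{e}_i, \tilde{f}_i$ and preserve $\wt, \varepsilon_i, \varphi_i$. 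Part (2) proceeds analogously with $\tau_\infty$ in place of $\tau_\lambda$ and the $\mathbb{Z}_{\ge 0}^I$-grading in place of the weight grading.

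The main obstacle I anticipate is the well-definedness in the $\mathcal{B}(\infty)$ case, where one lacks both the finite-dimensional ambient module and the highest-weight conclusion available in Part (1). Here the $\ast$-stability axiom (v) is essential: it endows ${\bf B}^{\rm up}$ with a second crystal structure given by operators $\tilde{e}_i^{\ast}\coloneqq \ast\circ\tilde{e}_i\circ\ast$, and the compatibility between the two structures encodes enough rigidity (essentially the Serre-type relations among the $\tilde{e}_i$) to force coherence of the inductive matching across distinct descent paths from $\tau_\infty$. This is the Kashiwara-Saito-style characterization underlying \cite{BK}, and it is the technical heart of the proof.
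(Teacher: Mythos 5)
The paper does not prove this proposition: it cites \cite[Main Theorem 5.37]{BK} and moves on. So there is no in-text argument to compare yours against, and what I can do is assess your proposal against what Berenstein--Kazhdan actually establish and against what the surrounding text of the paper makes available.

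Your overall strategy --- fix the upper global basis as a reference perfect basis indexed by $\mathcal{B}(\lambda)$ (resp.\ $\mathcal{B}(\infty)$), verify the crystal axioms from the perfect-basis axioms, and then extract a canonical bijection from a unitriangularity comparison --- is the right one and is essentially what happens in \cite{BK}. The triangularity you invoke is exactly the content of Lemma \ref{l:triangle} in this paper, and the fact that the leading coefficient $c_{b,b}$ is nonzero while all other contributions have strictly smaller string parametrization is precisely what forces the bijection to exist and to commute with $\tilde e_i$, $\tilde f_i$, $\wt$, $\varepsilon_i$, $\varphi_i$. Two caveats on this point. First, as stated in the paper Lemma \ref{l:triangle} already assumes both bases are indexed by $\mathcal{B}(\infty)$, which presupposes the conclusion of the very proposition you are proving; to avoid circularity you must formulate the triangularity intrinsically, parametrizing the abstract perfect basis by its own string data $\Phi_{\bf i}$ (which is well-defined from the perfect-basis axioms) and then comparing to $\Phi_{\bf i}$ on $\mathcal{B}(\infty)$. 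Second, extracting a genuine bijection from triangularity uses injectivity of $\Phi_{\bf i}$ on $\mathcal{B}(\infty)$ within each weight space; this is known, but it is a nontrivial input that your sketch leaves implicit.

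The genuine error in your proposal is the claim that the $\ast$-stability axiom (v) is ``essential'' to settle Part (2). It is not: the cited result \cite[Main Theorem 5.37]{BK} does not assume $\ast$-stability, and the paper itself points this out in the remark immediately following Lemma \ref{l:triangle}, which notes that the triangularity holds for perfect bases \emph{without} property (v). Axiom (v) is an extra hypothesis imposed in this paper for later purposes (Proposition \ref{p:*_property}), not an ingredient of the uniqueness theorem. The standard way to handle Part (2) without $\ast$ is to reduce to Part (1): for any $b \in {\bf B}^{\rm up}$ one can choose $\lambda \in P_+$ large enough that the image of $b$ under $\iota_\lambda^{-1}$ (or, dually, the image of the corresponding lower basis element under $\pi_\lambda$) is nonzero, using $\mathcal{B}(\infty) = \bigcup_{\lambda\in P_+}\widetilde{\mathcal{B}}(\lambda)$; this pushes the comparison into $V(\lambda)^{\ast}$, where the finite-weight-space argument of Part (1) applies, and the bijections so obtained are compatible as $\lambda$ grows. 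Invoking a ``Kashiwara--Saito-style'' biinvolutive rigidity is an overreach here and, more importantly, would restrict the scope of the theorem to the $\ast$-stable case, which is strictly weaker than what is needed (and than what \cite{BK} prove).
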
\vspace{2mm}

Remark that
\begin{align*}
&\varepsilon_i(b) = \max\{k \in \mathbb{Z}_{\ge 0} \mid \tilde{e}_i ^k b \neq 0\}\ \text{for}\ b \in \mathcal{B}(\infty),\ \text{and}\\
&\varepsilon_i(b) = \max\{k \in \mathbb{Z}_{\ge 0} \mid \tilde{e}_i ^k b \neq 0\},\ \varphi_i (b) = \max\{k \in \mathbb{Z}_{\ge 0} \mid \tilde{f}_i ^k b \neq 0\}\ \text{for}\ b \in \mathcal{B}(\lambda).
\end{align*}
From now on, by Proposition \ref{p:uniqueness}, we write perfect bases of $V(\lambda)^{\ast}$ and $\mathbb{C}[U^-]$ as $\{\Xi_{\lambda}^{\rm up}(b) \mid b \in \mathcal{B}(\lambda)\}$ and $\{\Xi^{\rm up}(b) \mid b \in \mathcal{B}(\infty)\}$. The unique element $b$ of $\mathcal{B}(\lambda)$ (resp., $\mathcal{B}(\infty)$) with $\wt(b)=\lambda$ (resp., $\wt(b)=0$) is denoted by $b_\lambda$ (resp., $b_\infty$). 

\vspace{2mm}\begin{rem}\normalfont\label{r:length one case}
Let ${\bf e}_i \in \mathbb{Z}^I _{\ge 0}$ denote the unit vector corresponding to $i \in I$. Since $U(\mathfrak{u}^-)_{k{\bf e}_i} = \mathbb{C} f_i ^k$ for $k \ge 0$, we have $U(\mathfrak{u}^-)^\ast _{{\rm gr}, k{\bf e}_i} = \mathbb{C} \Xi^{\rm up}(\tilde{f}_i ^k b_\infty)$.
\end{rem}\vspace{2mm}

Now the condition (iii) in Definition \ref{definition of perfect bases} is equivalent to the following condition:
\begin{enumerate}
\item[{\rm (iii)$'$}] for all $i \in I$, $b \in \mathcal{B}(\infty)$, and $k \in \mathbb{Z}_{\ge 0}$, \[f_i ^k \cdot \Xi^{\rm up}(b) \in \mathbb{C}^\times \Xi^{\rm up}(\tilde{e}_i ^k b) + \sum_{\substack{b^\prime \in \mathcal{B}(\infty);\ {\rm wt}(b^\prime) = {\rm wt}(\tilde{e}_i ^k b),\\ \varepsilon_i (b^\prime) < \varepsilon_i (\tilde{e}_i ^k b)}} \mathbb{C} \Xi^{\rm up}(b^\prime).\]
\end{enumerate} 
Moreover we have
\begin{align}
&f_i ^{\varepsilon_i(b)} \cdot \Xi^{\rm up}(b) \in \mathbb{C}^\times \Xi^{\rm up}(\tilde{e}_i ^{\varepsilon_i(b)} b),\label{string1}\\ 
&f_i ^k \cdot \Xi^{\rm up}(b)=0\;\text{for }k>\varepsilon_i(b).\label{string2}
\end{align}
A perfect basis ${\bf B}^{\rm up}(\lambda)$ also has the similar properties, but we do not need them in this paper. The following lemma follows by using \eqref{string1} and \eqref{string2} repeatedly.
\vspace{2mm}\begin{lem}\label{l:triangle}
Let ${\bf B}_k^{\rm up} = \{\Xi_k^{\rm up}(b) \mid b \in \mathcal{B}(\infty)\}$ be perfect bases of $\mathbb{C}[U^-]$ $(k=1, 2)$. Write 
\[
\Xi_1^{\rm up}(b)=\sum_{b^{\prime}\in\mathcal{B}(\infty)}c_{b, b^{\prime}}\Xi_2^{\rm up}(b')\quad(c_{b, b^{\prime}}\in\mathbb{C}).
\]
Then $c_{b, b}\in \mathbb{C}^{\times}$. Moreover $c_{b, b^{\prime}}=0$ unless $\Phi_{\bf i}(b^{\prime})\leq \Phi_{\bf i}(b)$ $($see Definition \ref{d:order} for the definition of the order $<)$, where $\Phi_{\bf i}\colon \mathcal{B}(\infty) \rightarrow \mathbb{Z}_{\ge 0} ^N$ is the Littelmann string parametrization map associated with a reduced word ${\bf i} \in I^N$ for the longest element $w_0$ of $W$ $($see Definition \ref{d:parametrization} (1)$)$. 
\end{lem}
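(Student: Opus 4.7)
The plan is to prove the lemma by an iterative peeling argument that walks along the reduced word ${\bf i} = (i_1, \ldots, i_N)$, preceded by a single general support constraint for transitions between perfect bases.

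First I would establish the following auxiliary fact: for each $i \in I$, if $\Xi_1^{\rm up}(b) = \sum_{b' \in \mathcal{B}(\infty)} c_{b, b'} \Xi_2^{\rm up}(b')$, then $c_{b, b'} = 0$ unless $\varepsilon_i(b') \leq \varepsilon_i(b)$. To see this, set $m := \max\{\varepsilon_i(b') \mid c_{b, b'} \neq 0\}$ and suppose toward contradiction that $m > \varepsilon_i(b)$; applying $f_i^m$ makes the left-hand side vanish by \eqref{string2}, while on the right only the terms with $\varepsilon_i(b') = m$ survive, each contributing a nonzero multiple of $\Xi_2^{\rm up}(\tilde{e}_i^m b')$ by \eqref{string1}. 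The map $b' \mapsto \tilde{e}_i^m b'$ is injective on the relevant set by axiom (iv) of Definition \ref{definition of perfect bases}, so the right-hand side is a nontrivial linear combination of distinct basis vectors, contradicting the vanishing on the left.

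Next I would iterate along ${\bf i}$. Set $b^{(0)} = b$ and $b'^{(0)} = b'$, and proceed by induction on $l = 1, \ldots, N$ under the running hypothesis $c_{b^{(l-1)}, b'^{(l-1)}} \neq 0$. Writing $a_l = \varepsilon_{i_l}(b^{(l-1)})$ and $a_l' = \varepsilon_{i_l}(b'^{(l-1)})$, the auxiliary fact gives $a_l' \leq a_l$. If $a_l' < a_l$, then $\Phi_{\bf i}(b') < \Phi_{\bf i}(b)$ in the lexicographic order and we stop. Otherwise $a_l' = a_l$, and I apply $f_{i_l}^{a_l}$ to the expansion $\Xi_1^{\rm up}(b^{(l-1)}) = \sum_{b''} c_{b^{(l-1)}, b''} \Xi_2^{\rm up}(b'')$: using the auxiliary fact together with \eqref{string1} and \eqref{string2}, only terms with $\varepsilon_{i_l}(b'') = a_l$ survive and each becomes pure $\beta_{b''} \Xi_2^{\rm up}(\tilde{e}_{i_l}^{a_l} b'')$ with $\beta_{b''} \in \mathbb{C}^\times$. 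Comparing this with the direct expansion $\Xi_1^{\rm up}(b^{(l)}) = \sum_{b'''} c_{b^{(l)}, b'''} \Xi_2^{\rm up}(b''')$, where $b^{(l)} := \tilde{e}_{i_l}^{a_l} b^{(l-1)}$ and whose support is confined to $\{b''' \mid \varepsilon_{i_l}(b''') = 0\}$ (again by the auxiliary fact, now applied to $b^{(l)}$), yields the proportionality $\alpha \cdot c_{b^{(l)}, \tilde{e}_{i_l}^{a_l} b''} = \beta_{b''} \cdot c_{b^{(l-1)}, b''}$ for every $b''$ with $\varepsilon_{i_l}(b'') = a_l$, where $\alpha \in \mathbb{C}^\times$. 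Specializing to $b'' = b'^{(l-1)}$ gives $c_{b^{(l)}, b'^{(l)}} \neq 0$, closing the step.

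If we reach $l = N$ without ever being in the strict case, then $\Phi_{\bf i}(b') = \Phi_{\bf i}(b)$, so $b' = b$ by the injectivity of the string parametrization; combined with the early-termination cases this establishes the triangularity. The nonvanishing $c_{b, b} \in \mathbb{C}^\times$ then follows by applying the same proportionality chain to $b' = b$ and reducing to $c_{b_\infty, b_\infty}$, which equals $1$ because axiom (ii) of Definition \ref{definition of perfect bases} forces $\tau_\infty = 1 \in \mathbb{C}[U^-]$ for both perfect bases. The main point requiring care is the coefficient matching inside the peeling step: the auxiliary fact must be invoked twice (once on $b^{(l-1)}$ before applying $f_{i_l}^{a_l}$, and once on $b^{(l)}$ afterwards) so that both expansions live in the same subset of the basis indexed by $\{b''' \mid \varepsilon_{i_l}(b''') = 0\}$; the resulting proportionality must also remain meaningful in the degenerate case $a_l = 0$, where $f_{i_l}^{a_l}$ is the identity and the step merely renames $b^{(l-1)}$ to $b^{(l)} = b^{(l-1)}$ without losing or creating support information.
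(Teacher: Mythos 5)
Your proof is correct and it faithfully unpacks the paper's terse indication that the lemma ``follows by using \eqref{string1} and \eqref{string2} repeatedly'': the auxiliary support constraint is the content of one application of \eqref{string1}, \eqref{string2}, and axiom (iv), and the peeling iteration along ${\bf i}$ is the ``repeatedly'' part, with the termination $b^{(N)}=b_\infty$ (a standard consequence of $\mathcal{B}_{w_0}(\infty)=\mathcal{B}(\infty)$ and $\mathcal{B}_e(\infty)=\{b_\infty\}$) and $\tau_\infty=1\in\mathbb{C}[U^-]$ supplying the nonvanishing of $c_{b,b}$.
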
\vspace{2mm}

\begin{rem}\normalfont
This lemma holds for perfect bases which do not necessarily have the property {\rm (v)}.
\end{rem}\vspace{2mm}

In the following, the dual basis of a perfect basis ${\bf B}^{\rm up}(\lambda)$ (resp., ${\bf B}^{\rm up}$) is also an important object, which is called a \emph{lower perfect basis}, and denoted by ${\bf B}^{\rm low}(\lambda) = \{\Xi_{\lambda}^{\rm low}(b) \mid b \in \mathcal{B}(\lambda)\} \subset V(\lambda)$ (resp., ${\bf B}^{\rm low}=\{\Xi^{\rm low}(b) \mid b \in \mathcal{B}(\infty)\}\subset U(\mathfrak{u}^-)$). Then the condition (iii)$'$ above is replaced by the following condition (see the proof of \cite[Lemma 4.6]{FN}): 
\begin{enumerate}
\item[{\rm $\text{(iii)}'_l$}] for all $i \in I$, $b \in \mathcal{B}(\infty)$, and $k \in \mathbb{Z}_{\ge 0}$, \[f_i ^k \cdot \Xi^{\rm low}(b) \in \mathbb{C}^\times \Xi^{\rm low}(\tilde{f}_i ^k b) + \sum_{\substack{b^\prime \in \mathcal{B}(\infty);\ {\rm wt}(b^\prime) = {\rm wt}(\tilde{f}_i ^k b),\\ \varepsilon_i (b^\prime) > \varepsilon_i (\tilde{f}_i ^k b)}} \mathbb{C} \Xi^{\rm low}(b^\prime).\] 
\end{enumerate}

\vspace{2mm}\begin{rem}\normalfont
Baumann introduced the notion of {\it bases of canonical type} in \cite{Bau}. The axioms of bases of canonical type are slightly stronger than our conditions on the lower perfect bases because they impose an additional condition on the coefficient of $\Xi^{\rm low}(\tilde{f}_i ^k b)$ in our condition $\text{(iii)}'_l$.
\end{rem}\vspace{2mm}

The dual bases of $\{G_{\lambda}^{\rm up}(b) \mid b \in \mathcal{B}(\lambda)\}$ and $\{G^{\rm up}(b) \mid b \in \mathcal{B}(\infty)\}$ (that is, the lower global bases of $V(\lambda)$ and $U(\mathfrak{u}^-)$) are denoted by $\{G_{\lambda}^{\rm low}(b) \mid b \in \mathcal{B}(\lambda)\}$ and $\{G^{\rm low}(b) \mid b \in \mathcal{B}(\infty)\}$, respectively. 

Recall the involution $\ast\colon U(\mathfrak{u}^-)\to U(\mathfrak{u}^-)$. We see from \cite[Theorem 2.1.1]{Kas4} that, for $b\in \mathcal{B}(\infty)$, there exists an element $b^{\ast}\in \mathcal{B}(\infty)$ such that $G^{\rm low}(b)^{\ast}=G^{\rm low}(b^{\ast})$ (see, for instance, \cite[Proposition 2.8 (1)]{FN}). The involution $\ast \colon \mathcal{B}(\infty) \to \mathcal{B}(\infty)$ is called {\it Kashiwara's involution}. Set \[\tilde{e}_i ^\ast \coloneqq \ast \circ \tilde{e}_i \circ \ast,\;\tilde{f}_i ^\ast \coloneqq \ast \circ \tilde{f}_i \circ \ast,\;\varepsilon_i^{\ast}\coloneqq \varepsilon_i\circ \ast,\ {\rm and}\ \varphi_i^{\ast}\coloneqq \varphi_i\circ\ast\] for $i \in I$. Note that $G^{\rm up}(b)^{\ast}=G^{\rm up}(b^{\ast})$ for $b\in \mathcal{B}(\infty)$. In fact all perfect bases have such a property as follows.

\vspace{2mm}\begin{prop}\label{p:*_property}
Let $\{\Xi^{\rm up}(b) \mid b \in \mathcal{B}(\infty)\}$ be a perfect basis of $\mathbb{C}[U^-]$. Then $\Xi^{\rm up}(b)^\ast =\Xi^{\rm up}(b^\ast)$ for all $b\in \mathcal{B}(\infty)$; hence the equality $\Xi^{\rm low}(b)^\ast =\Xi^{\rm low}(b^\ast)$ also holds for all $b\in \mathcal{B}(\infty)$.
\end{prop}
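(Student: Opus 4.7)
The strategy is to compare the given perfect basis $\{\Xi^{\rm up}(b)\}$ with the upper global basis $\{G^{\rm up}(b)\}$, for which the $\ast$-statement is already known (noted in the text just before this proposition), and to transfer the property using the triangularity provided by Lemma~\ref{l:triangle}. By condition (v), the $\mathbb{C}$-algebra involution $\ast$ permutes the perfect basis, so there is a well-defined involution $\tilde\sigma\colon \mathcal{B}(\infty)\to\mathcal{B}(\infty)$ characterized by $\Xi^{\rm up}(b)^\ast=\Xi^{\rm up}(\tilde\sigma(b))$. Since the anti-involution $\ast$ on $U(\mathfrak{u}^-)$ preserves each homogeneous component $U(\mathfrak{u}^-)_{\bf d}$, the induced involution on $\mathbb{C}[U^-]$ preserves $\wt$, and hence so does $\tilde\sigma$. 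The goal is to prove $\tilde\sigma=\ast$ on $\mathcal{B}(\infty)$.

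Fix a reduced word ${\bf i}\in I^N$ for the longest element $w_0$ and expand via Lemma~\ref{l:triangle}:
\[
\Xi^{\rm up}(b)=\sum_{b'\in\mathcal{B}(\infty)} c_{b,b'}\,G^{\rm up}(b'),\qquad c_{b,b}\in\mathbb{C}^{\times},\ c_{b,b'}=0\text{ unless }\Phi_{\bf i}(b')\le \Phi_{\bf i}(b).
\]
Applying $\ast$, using $G^{\rm up}(b')^\ast=G^{\rm up}((b')^\ast)$ termwise, and reindexing $b''=(b')^\ast$, the left side becomes $\Xi^{\rm up}(\tilde\sigma(b))$ while the right side becomes $\sum_{b''} c_{b,(b'')^\ast}\,G^{\rm up}(b'')$. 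Comparing with the direct expansion of $\Xi^{\rm up}(\tilde\sigma(b))$ in $\{G^{\rm up}(b'')\}$ yields the key identity
\[
c_{\tilde\sigma(b),b''}=c_{b,(b'')^\ast}\qquad\text{for all } b,b''\in\mathcal{B}(\infty).
\]

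Now fix a weight $\mu$; since $\tilde\sigma$ and $\ast$ preserve $\wt$, they restrict to involutions of the finite set $\mathcal{B}(\infty)_\mu$. Enumerate $\mathcal{B}(\infty)_\mu=\{b^{(1)},\ldots,b^{(m)}\}$ in strictly increasing order of $\Phi_{\bf i}$ (possible because $\Phi_{\bf i}$ is injective), and let $\bar\sigma,\bar\ast$ denote the induced involutions of $\{1,\ldots,m\}$. The matrix $C_{j,k}:=c_{b^{(j)},b^{(k)}}$ is lower triangular with $C_{j,j}\neq 0$. Substituting $k=\bar\ast(j)$ in the key identity gives $C_{\bar\sigma(j),\bar\ast(j)}=C_{j,j}\neq 0$, so lower triangularity forces $\bar\ast(j)\le\bar\sigma(j)$ for every $j$. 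Since both $\bar\sigma$ and $\bar\ast$ are bijections of $\{1,\ldots,m\}$, we have $\sum_j \bar\sigma(j)=\sum_j \bar\ast(j)=\tfrac{m(m+1)}{2}$; combined with the pointwise inequality this forces $\bar\sigma(j)=\bar\ast(j)$ for all $j$. Hence $\tilde\sigma=\ast$ on each weight space, proving the first assertion.

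The statement for the lower perfect basis is then immediate by duality: for all $b,b'\in\mathcal{B}(\infty)$,
\[
\langle \Xi^{\rm up}(b),\Xi^{\rm low}(b')^\ast\rangle=\langle \Xi^{\rm up}(b)^\ast,\Xi^{\rm low}(b')\rangle=\langle \Xi^{\rm up}(b^\ast),\Xi^{\rm low}(b')\rangle=\delta_{b^\ast,b'}=\delta_{b,(b')^\ast},
\]
so $\Xi^{\rm low}(b')^\ast=\Xi^{\rm low}((b')^\ast)$. The main obstacle is pinpointing $\tilde\sigma$ precisely as Kashiwara's involution $\ast$: Lemma~\ref{l:triangle} alone only tells us that $\tilde\sigma$ and $\ast$ are related by conjugation by the triangular matrix $C$, which is a priori much weaker than equality. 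The pivotal trick is to combine the diagonal non-vanishing of $C$ with the elementary sum-of-values identity for bijections of a finite set, which upgrades the one-sided inequality $\bar\ast(j)\le\bar\sigma(j)$ into the equality of the two involutions.
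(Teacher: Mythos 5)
Your proof is correct, and it reaches the conclusion by a genuinely different mechanism from the paper's, although both rest on the same key input, Lemma~\ref{l:triangle}. The paper argues by contradiction: assuming some $b$ has $(b^{\ast})^{\star}\neq b$ (writing $b^{\star}$ for your $\tilde\sigma(b)$), it picks $b_0$ maximal in string order among bad elements of its weight space, uses the pairing with $G^{\rm low}(b_0)$ and the triangularity to show $\Phi_{\bf i}(b_0)<\Phi_{\bf i}((b_0^{\ast})^{\star})$, concludes that $(b_0^{\ast})^{\star}$ must be good, and then cancels the two involutions to force $(b_0^{\ast})^{\star}=b_0$, a contradiction. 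You instead make the change-of-basis matrix explicit, derive the clean identity $C_{\bar\sigma(j),k}=C_{j,\bar\ast(k)}$ from applying $\ast$ to the expansion, set $k=\bar\ast(j)$ to get a nonzero entry below the diagonal unless $\bar\ast(j)\leq\bar\sigma(j)$, and then use the elementary fact that a pointwise inequality between two permutations of a finite set forces equality (by comparing sums). The net effect is the same, but your route replaces the extremal-element reductio by a direct, global counting argument and makes transparent exactly how the triangular matrix intertwines the two involutions; the paper's route is shorter to state once one is comfortable extracting a contradiction from a single maximal element. Your verification of the dual statement $\Xi^{\rm low}(b)^{\ast}=\Xi^{\rm low}(b^{\ast})$ via the pairing is also a correct elaboration of a step the paper leaves implicit.
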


\begin{proof}
For $b \in \mathcal{B}(\infty)$, there exists $b^{\star} \in \mathcal{B}(\infty)$ such that $\Xi^{\rm up}(b)^{\ast}=\Xi^{\rm up}(b^{\star})$ by the property (v). Suppose that there exists $b\in \mathcal{B}(\infty)$ such that $(b^{\ast})^{\star}\neq b$. Let ${\bf i}$ be a reduced word for the longest element $w_0$, and $b_0$ an element such that $(b_0^{\ast})^{\star}\neq b_0$ and such that $\Phi_{\bf i}(b_0)\ge \Phi_{\bf i}(b)$ for all $b\in \mathcal{B}(\infty)_{\wt(b_0)}$ with $(b^{\ast})^{\star}\neq b$. Then 
\[
\langle \Xi^{\rm up}((b_0^{\ast})^{\star}), G^{\rm low}(b_0)\rangle=\langle\Xi^{\rm up}(b_0^{\ast})^\ast, G^{\rm low}(b_0)\rangle=\langle\Xi^{\rm up}(b_0^{\ast}), G^{\rm low}(b_0^{\ast})\rangle\neq 0
\]
by Lemma \ref{l:triangle}. Hence, by Lemma \ref{l:triangle} again, $\Phi_{\bf i}(b_0)<\Phi_{\bf i}((b_0^{\ast})^{\star})$. Hence, by the assumption on $b_0$, we have the equality $(b_0^{\ast})^{\star}=(((b_0^{\ast})^{\star})^{\ast})^{\star}$, which is equivalent to $b_0=(b_0^{\ast})^{\star}$. This contradicts the choice of $b_0$. Hence $(b^{\ast})^{\star}=b$ for all $b\in \mathcal{B}(\infty)$.
\end{proof}

By condition $\text{(iii)}'_l$ and Proposition \ref{p:*_property}, we obtain the following (see, for instance, the proof of \cite[Proposition 2.8]{FN}).

\vspace{2mm}\begin{prop}\label{p:rightaction}
For all $i \in I$, $b \in \mathcal{B}(\infty)$, and $k \in \mathbb{Z}_{\ge 0}$, \[\Xi^{\rm low} (b) \cdot f_i ^k \in \mathbb{C}^\times \Xi^{\rm low} ((\tilde{f}_i ^\ast)^k b) + \sum_{\substack{b^\prime \in \mathcal{B}(\infty);\ {\rm wt}(b^\prime) = {\rm wt}((\tilde{f}_i ^\ast)^k b),\\ \varepsilon_i ^\ast (b^\prime) > \varepsilon_i ^\ast ((\tilde{f}_i ^\ast)^k b)}} \mathbb{C} \Xi^{\rm low} (b^\prime).\]
\end{prop}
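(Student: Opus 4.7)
The plan is to transport condition $\text{(iii)}'_l$, which describes the left action of $f_i^k$ on a lower perfect basis, through the $\mathbb{C}$-algebra involution $\ast$ on $\mathbb{C}[U^-]$ in order to extract the analogous statement for the right action. The key bookkeeping identity I expect to use is
\[
(\rho \cdot f_i^k)^\ast = f_i^k \cdot \rho^\ast \quad\text{for all } \rho \in \mathbb{C}[U^-].
\]
To justify it, I would combine the defining identity $\langle \rho^\ast, y\rangle = \langle \rho, y^\ast\rangle$ with $\langle \rho \cdot x, y\rangle = -\langle \rho, y\cdot x\rangle$ and $\langle x\cdot \rho, y\rangle = -\langle \rho, x\cdot y\rangle$, together with the fact that $\ast$ is a $\mathbb{C}$-algebra \emph{anti}-involution of $U(\mathfrak{u}^-)$. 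A one-line computation then yields $(x \cdot \rho)^\ast = \rho^\ast \cdot x^\ast$ for every $x \in U(\mathfrak{u}^-)$; specializing to $x = f_i^k$ (which is $\ast$-fixed) and then replacing $\rho$ by $\rho^\ast$ gives the displayed identity.

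Next I would apply condition $\text{(iii)}'_l$ to $b^\ast$ in place of $b$, which gives
\[
f_i^k \cdot \Xi^{\rm low}(b^\ast) \in \mathbb{C}^\times \Xi^{\rm low}(\tilde{f}_i^k b^\ast) + \sum_{\substack{b'' \in \mathcal{B}(\infty);\ \wt(b'') = \wt(\tilde{f}_i^k b^\ast),\\ \varepsilon_i(b'') > \varepsilon_i(\tilde{f}_i^k b^\ast)}} \mathbb{C}\, \Xi^{\rm low}(b''),
\]
and apply $\ast$ to both sides. On the left-hand side, Proposition \ref{p:*_property} gives $\Xi^{\rm low}(b^\ast)^\ast = \Xi^{\rm low}(b)$, so the boxed identity converts $f_i^k \cdot \Xi^{\rm low}(b^\ast)$ into $\Xi^{\rm low}(b) \cdot f_i^k$; on the right-hand side, the same proposition turns each $\Xi^{\rm low}(b'')^\ast$ into $\Xi^{\rm low}((b'')^\ast)$, while $\mathbb{C}^\times$ is preserved because $\ast$ is $\mathbb{C}$-linear.

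Finally I would reindex the result using the definitions $\tilde{f}_i^\ast = \ast \circ \tilde{f}_i \circ \ast$ and $\varepsilon_i^\ast = \varepsilon_i \circ \ast$. Since $(\tilde{f}_i^\ast)^k = \ast \circ \tilde{f}_i^k \circ \ast$, one has $(\tilde{f}_i^k b^\ast)^\ast = (\tilde{f}_i^\ast)^k b$, so the leading term becomes $\Xi^{\rm low}((\tilde{f}_i^\ast)^k b)$. Setting $b' \coloneqq (b'')^\ast$, the condition $\wt(b'') = \wt(\tilde{f}_i^k b^\ast)$ becomes $\wt(b') = \wt((\tilde{f}_i^\ast)^k b)$ because $\ast$ preserves the multi-grading of $U(\mathfrak{u}^-)$, and the condition $\varepsilon_i(b'') > \varepsilon_i(\tilde{f}_i^k b^\ast)$ becomes $\varepsilon_i^\ast(b') > \varepsilon_i^\ast((\tilde{f}_i^\ast)^k b)$ directly from the definition of $\varepsilon_i^\ast$. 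Collecting these gives exactly the asserted inclusion. I do not anticipate any substantial obstacle: the only subtlety is the anti-versus-co homomorphism bookkeeping needed to establish $(x \cdot \rho)^\ast = \rho^\ast \cdot x^\ast$, after which the rest is a mechanical re-indexing via Proposition \ref{p:*_property} and the definitions of $\tilde{f}_i^\ast$ and $\varepsilon_i^\ast$.
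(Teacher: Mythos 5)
Your argument is correct and matches the route the paper itself points to: apply condition $\text{(iii)}'_l$ to $b^\ast$, transport through the involution $\ast$, and reindex using Proposition \ref{p:*_property} together with the definitions $\tilde f_i^\ast = \ast\circ\tilde f_i\circ\ast$, $\varepsilon_i^\ast = \varepsilon_i\circ\ast$, and the fact that $\ast$ preserves weights. One small bookkeeping slip is worth flagging: you derive the identity $(x\cdot\rho)^\ast = \rho^\ast\cdot x^\ast$ for $\rho\in\mathbb{C}[U^-] = U(\mathfrak{u}^-)_{\rm gr}^\ast$ via the dual pairing, but the object you then apply it to is $\Xi^{\rm low}(b)\in U(\mathfrak{u}^-)$, where the left and right actions of $f_i^k$ are just multiplication in $U(\mathfrak{u}^-)$. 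There the needed identity $(f_i^k\cdot u)^\ast = u^\ast\cdot f_i^k$ is immediate from $\ast$ being an algebra anti-involution fixing $f_i$, and requires no pairing computation; the argument via the pairing is for the $U(\mathfrak{u}^-)$-bimodule structure on $\mathbb{C}[U^-]$, which is not the one at play in condition $\text{(iii)}'_l$. Fixing this makes the proof cleaner and leaves the rest of your reindexing untouched.
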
\vspace{2mm}

We will prove that a perfect basis ${\bf B}^{\rm up}$ of $\mathbb{C}[U^-]$ induces a perfect basis ${\bf B}^{\rm up}(\lambda)$ of $V(\lambda)^\ast$. To do this, we here recall the remarkable properties of the lower global bases. 

\vspace{2mm}\begin{prop}[{\cite[Theorem 5]{Kas2}}]\label{p:crystal basis}
For $\lambda \in P_+$, let $\pi_\lambda \colon U(\mathfrak{u}^-) \twoheadrightarrow V(\lambda)$ denote the surjective $U(\mathfrak{u}^-)$-module homomorphism given by $u \mapsto u v_{\lambda}$.
\begin{enumerate}
\item[{\rm (1)}] For $b\in \mathcal{B}(\infty)$, there exists $\pi_{\lambda}(b) \in \mathcal{B}(\lambda) \cup \{0\}$ such that $\pi_{\lambda}(G^{\rm low}(b))=G_{\lambda}^{\rm low}(\pi_{\lambda}(b))$, where $G_{\lambda}^{\rm low}(0) \coloneqq 0$; in addition, for \[\widetilde{\mathcal{B}}(\lambda) \coloneqq \{b \in \mathcal{B}(\infty) \mid \pi_\lambda(b) \neq 0\},\] the map $\pi_\lambda \colon \widetilde{\mathcal{B}}(\lambda) \rightarrow \mathcal{B}(\lambda)$, $b \mapsto \pi_\lambda(b)$, is bijective.
\item[{\rm (2)}] $\tilde{f}_i \pi_\lambda(b) = \pi_\lambda (\tilde{f}_i b)$ for all $i \in I$ and $b \in \mathcal{B}(\infty)$.
\item[{\rm (3)}] $\tilde{e}_i \pi_\lambda(b) = \pi_\lambda (\tilde{e}_i b)$ for all $i \in I$ and $b \in \widetilde{\mathcal{B}}(\lambda)$. 
\item[{\rm (4)}] $\varepsilon_i (\pi_\lambda(b)) = \varepsilon_i (b)$ and $\varphi_i (\pi_\lambda (b)) = \varphi_i (b) + \langle\lambda, h_i\rangle$ for all $i \in I$ and $b \in \widetilde{\mathcal{B}}(\lambda)$. 
\end{enumerate}
\end{prop}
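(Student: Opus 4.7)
The plan is to lift the entire situation to the quantized enveloping algebra $U_q(\mathfrak{g})$ and the integrable highest weight module $V_q(\lambda)$, and to study the surjective $U_q(\mathfrak{u}^-)$-linear projection $\pi_{q,\lambda}\colon U_q(\mathfrak{u}^-)\twoheadrightarrow V_q(\lambda)$, $u\mapsto u\cdot v_\lambda$, whose kernel is the left ideal generated by $\{f_i^{\langle \lambda,h_i\rangle+1}\mid i\in I\}$. Let $L(\infty)\subset U_q(\mathfrak{u}^-)$ and $L(\lambda)\subset V_q(\lambda)$ denote the respective crystal lattices at $q=0$. First I would show $\pi_{q,\lambda}(L(\infty))=L(\lambda)$; this is essentially immediate because $\pi_{q,\lambda}$ carries $1$ to $v_\lambda$ and is equivariant under the Kashiwara operators on the negative part. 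Reducing modulo $q$ yields a $\mathbb{Q}$-linear map $\bar\pi_\lambda\colon L(\infty)/qL(\infty)\to L(\lambda)/qL(\lambda)$ sending $\mathcal{B}(\infty)$ into $\mathcal{B}(\lambda)\cup\{0\}$; this produces the desired map $\pi_\lambda\colon\mathcal{B}(\infty)\to\mathcal{B}(\lambda)\cup\{0\}$ and settles the crystal-level part of (1).

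Next I would deduce (2)--(4) from the compatibility of $\pi_{q,\lambda}$ with the Kashiwara operators. The relation $\pi_{q,\lambda}\circ\tilde{f}_i=\tilde{f}_i\circ\pi_{q,\lambda}$ at the level of crystal lattices follows from $\pi_{q,\lambda}$ being a $U_q(\mathfrak{u}^-)$-module map; passing to $q=0$ gives (3). Statement (4) is the restriction of (3) to $\widetilde{\mathcal{B}}(\lambda)$, using that $\pi_\lambda(\tilde{e}_i b)\neq 0$ forces $b\in\widetilde{\mathcal{B}}(\lambda)$. Bijectivity of $\pi_\lambda|_{\widetilde{\mathcal{B}}(\lambda)}$ (statement (2)) then follows by comparing weight multiplicities: $\mathcal{B}(\lambda)$ indexes a basis of $V_q(\lambda)$, while the non-zero images under $\bar\pi_\lambda$ of $\mathcal{B}(\infty)$ span $L(\lambda)/qL(\lambda)$ weight-by-weight, so the induced map must be a bijection on the non-kernel subset.

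For (5), the commutation relations imply $\varepsilon_i(\pi_\lambda(b))=\varepsilon_i(b)$; combined with the weight shift $\wt(\pi_\lambda(b))=\lambda+\wt(b)$ and the identity $\varphi_i=\varepsilon_i+\langle\wt(\cdot),h_i\rangle$ in both crystals, one obtains $\varphi_i(\pi_\lambda(b))=\varphi_i(b)+\langle\lambda,h_i\rangle$. Finally, the global-basis statement $\pi_\lambda(G^{\rm low}(b))=G_\lambda^{\rm low}(\pi_\lambda(b))$ is obtained by invoking Kashiwara's characterization of lower global bases as the unique preimages of crystal basis elements that are simultaneously bar-invariant and lie in the appropriate $\mathbb{Z}[q,q^{-1}]$-integral form. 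Since $\pi_{q,\lambda}$ preserves the bar involution and the integral form and is compatible with the crystal lattice, the image $\pi_{q,\lambda}(G^{\rm low}_q(b))$ satisfies the defining properties of $G^{\rm low}_{q,\lambda}(\pi_\lambda(b))$ when $\pi_\lambda(b)\neq 0$, and is zero otherwise; specializing $q\to 1$ yields the classical statement.

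The main technical obstacle is this last step: verifying that $\pi_{q,\lambda}$ is simultaneously compatible with the crystal lattice, the bar involution, and the $\mathbb{Z}[q,q^{-1}]$-form. This is precisely the content of Kashiwara's grand-loop construction of lower global bases, and for the purposes of this paper I would cite it rather than reprove it.
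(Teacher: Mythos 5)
The paper itself gives no proof of Proposition~\ref{p:crystal basis}: it is quoted as \cite[Theorem 5]{Kas2}, and you ultimately arrive at the same place by deferring the technical core to Kashiwara's grand-loop argument. At that level the two treatments coincide, and citing Kashiwara is certainly the right call.

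However, your intermediate sketch misplaces where the real difficulty sits. You assert that $\pi_{q,\lambda}(L(\infty))=L(\lambda)$ is ``essentially immediate'' and that the relation $\pi_{q,\lambda}\circ\tilde f_i=\tilde f_i\circ\pi_{q,\lambda}$ on crystal lattices ``follows from $\pi_{q,\lambda}$ being a $U_q(\mathfrak{u}^-)$-module map.'' Neither is the case. The Kashiwara operators on $U_q(\mathfrak{u}^-)$ are defined via the $q$-boson structure, using the $q$-derivation $e_i'$ and the decomposition $U_q(\mathfrak{u}^-)=\bigoplus_{n\geq 0}f_i^{(n)}\Ker e_i'$, whereas those on $V_q(\lambda)$ come from the integrable $U_q(\mathfrak{sl}_2)$-string decomposition $V_q(\lambda)=\bigoplus_{n\geq 0}f_i^{(n)}\Ker e_i$. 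Since $u\in\Ker e_i'$ by no means forces $u\,v_\lambda\in\Ker e_i$, $U_q(\mathfrak{u}^-)$-linearity alone matches only the $f_i$-actions, not the two a priori unrelated families of Kashiwara operators; reconciling them (modulo $qL$) requires the commutation formula relating $e_i(u\,v_\lambda)$ to $(e_i'u)\,v_\lambda$ and is one of the statements proved \emph{inside} the grand loop, simultaneously with lattice preservation, by induction on weight. Your justification of lattice preservation via operator-equivariance is thus circular, and the grand loop is needed well before the global-basis step, not only there. A smaller point: your enumeration (1)--(5) does not match the proposition's (1)--(4); bijectivity of $\pi_\lambda$ on $\widetilde{\mathcal{B}}(\lambda)$ is part of item (1), item (2) is the $\tilde f_i$-compatibility, item (3) the $\tilde e_i$-compatibility, and item (4) the statements on $\varepsilon_i$ and $\varphi_i$.
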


\vspace{2mm}\begin{prop}[{\cite[Theorem 7]{Kas2}}]\label{p:f_decomposition}
Let $k\in \mathbb{Z}_{\ge 0}$, and $i\in I$. Then 
\begin{align*}
\sum_{k'\ge k}f_i^{k'}U(\mathfrak{u}^-)&=\bigoplus_{b\in \mathcal{B}(\infty);\ \varepsilon_i(b)\ge k}\mathbb{C}G^{\rm low}(b),\;\text{and}\\
\sum_{k'\ge k}U(\mathfrak{u}^-)f_i^{k'}&=\bigoplus_{b\in \mathcal{B}(\infty);\ \varepsilon_i^{\ast}(b)\ge k}\mathbb{C}G^{\rm low}(b).
\end{align*}
\end{prop}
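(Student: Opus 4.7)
My plan is to prove the two equalities by combining the triangular action of $f_i^k$ given by condition $\text{(iii)}'_l$ with a downward induction on $\varepsilon_i$ inside each weight space of $U(\mathfrak{u}^-)$, and then to deduce the second identity from the first via the algebra anti-involution $\ast$.

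First I would prove the inclusion $\sum_{k' \ge k} f_i^{k'} U(\mathfrak{u}^-) \subseteq \bigoplus_{b;\ \varepsilon_i(b) \ge k} \mathbb{C} G^{\rm low}(b)$. Expanding any element of $U(\mathfrak{u}^-)$ in the lower global basis and applying $\text{(iii)}'_l$, every term of $f_i^{k'} G^{\rm low}(b)$ is supported on basis vectors $G^{\rm low}(b'')$ with $\varepsilon_i(b'') \ge \varepsilon_i(\tilde{f}_i^{k'} b)$. Since $\tilde{e}_i \tilde{f}_i = {\rm id}$ on $\mathcal{B}(\infty)$, we have $\tilde{e}_i^{k'} \tilde{f}_i^{k'} b = b \ne 0$, hence $\varepsilon_i(\tilde{f}_i^{k'} b) \ge k' \ge k$, which settles this direction.

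For the reverse inclusion, I would show by downward induction on $\varepsilon_i(b)$ inside the (finite-dimensional) homogeneous component for $\wt(b)$ that every $G^{\rm low}(b)$ with $\varepsilon_i(b) \ge k$ lies in $\sum_{k' \ge k} f_i^{k'} U(\mathfrak{u}^-)$. Setting $b' := \tilde{e}_i^{\varepsilon_i(b)} b$, so that $\tilde{f}_i^{\varepsilon_i(b)} b' = b$ by the crystal relations, condition $\text{(iii)}'_l$ gives
\[
f_i^{\varepsilon_i(b)} G^{\rm low}(b') = c\, G^{\rm low}(b) + \sum_{b''} c_{b''} G^{\rm low}(b''),
\]
with $c \in \mathbb{C}^\times$ and the sum restricted to $b''$ of the same weight as $b$ but strictly larger $\varepsilon_i$. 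In the base case, where $\varepsilon_i(b)$ is maximal in its weight space, the sum is empty; in the inductive step the inductive hypothesis places each $G^{\rm low}(b'')$ in the target subspace, and hence so does $G^{\rm low}(b)$. Finiteness of each weight space ensures termination.

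Finally, the second equality is obtained by applying $\ast$ to the first: since $f_i^\ast = f_i$ and $\ast$ is an algebra anti-involution, $\ast(f_i^{k'} U(\mathfrak{u}^-)) = U(\mathfrak{u}^-) f_i^{k'}$, while Proposition \ref{p:*_property} yields $\ast(G^{\rm low}(b)) = G^{\rm low}(b^\ast)$, and $\varepsilon_i(b^\ast) = \varepsilon_i^\ast(b)$ by the very definition of $\varepsilon_i^\ast$. The principal technical point is the inductive step for the reverse inclusion; the triangularity from $\text{(iii)}'_l$ together with the finiteness of each weight multiplicity supplies exactly the structure needed to recover each required basis vector.
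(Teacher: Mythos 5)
Your proof is correct, but note that the paper itself does not prove this proposition: it is imported verbatim as Kashiwara's \cite[Theorem 7]{Kas2}, whose original argument works in the quantized setting via crystal lattices in $U_q^-(\mathfrak{g})$. What you have done instead is give a self-contained proof at $q=1$ from the perfect-basis formalism the paper has already set up: the triangularity of the $f_i$-action from condition $\text{(iii)}'_l$ combined with $\varepsilon_i(\tilde f_i^{k'}b)\ge k'$ gives the inclusion $\subseteq$, and downward induction on $\varepsilon_i$ within each finite-dimensional weight component recovers each $G^{\rm low}(b)$ from $f_i^{\varepsilon_i(b)}\cdot G^{\rm low}(\tilde e_i^{\varepsilon_i(b)}b)$ modulo higher-$\varepsilon_i$ terms controlled by the inductive hypothesis. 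Applying $\ast$ turns the left ideal $\sum_{k'\ge k}f_i^{k'}U(\mathfrak{u}^-)$ into $\sum_{k'\ge k}U(\mathfrak{u}^-)f_i^{k'}$ and permutes the lower global basis via Kashiwara's $\ast$-involution, which yields the second identity. One small streamlining: for the global basis you can invoke $G^{\rm low}(b)^\ast=G^{\rm low}(b^\ast)$ directly (it is recalled in the paper just before Proposition \ref{p:*_property}, citing \cite[Theorem 2.1.1]{Kas4}) rather than going through Proposition \ref{p:*_property}, since the latter is deduced from that very fact; this keeps the logical chain a bit tighter. As a bonus, your argument actually establishes the decomposition for any lower perfect basis, not merely the lower global one, which is in the spirit of how the paper uses condition $\text{(iii)}'_l$ elsewhere (e.g.\ in the proof of Proposition \ref{p:compatibility}).
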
\vspace{2mm}

\begin{rem}\label{r:Btilde}\normalfont
It is known that the kernel of the map $\pi_\lambda \colon U(\mathfrak{u}^-) \twoheadrightarrow V(\lambda)$ is equal to $\sum_{i\in I}U(\mathfrak{u}^-)f_i^{\langle\lambda, h_i\rangle+1}$. Hence, by Proposition \ref{p:f_decomposition}, the subset $\widetilde{\mathcal{B}}(\lambda)$ is described in terms of the crystal, that is,
\[
\widetilde{\mathcal{B}}(\lambda)=\{b\in \mathcal{B}(\infty)\mid \varepsilon_i ^\ast(b)\leq \langle\lambda, h_i\rangle\;\text{for all}\;i\in I\}.
\]
\end{rem}\vspace{2mm}

A lower perfect basis is compatible with irreducible highest weight $U(\mathfrak{g})$-modules $V(\lambda)$, $\lambda \in P_+$, and their $U(\mathfrak{g}_i)$-submodules as follows.

\vspace{2mm}\begin{prop}\label{p:compatibility}
Let $\lambda\in P_+$, and $w\in W$. Then the following hold.
\begin{enumerate}
\item[{\rm (1)}] For $b\in \mathcal{B}(\infty)$, we have $\pi_\lambda(\Xi^{\rm low}(b))\neq 0$ if and only if $b \in \widetilde{\mathcal{B}}(\lambda)$. Thus the set $\{\Xi_{\lambda}^{\rm low}(\pi_\lambda (b)) \coloneqq \pi_\lambda(\Xi^{\rm low}(b)) \mid b \in \widetilde{\mathcal{B}}(\lambda)\}$ forms a $\mathbb{C}$-basis of $V(\lambda)$.
\item[{\rm (2)}] Fix $i\in I$ and $\ell\in \mathbb{Z}_{\ge 0}$. Let $I_i^{\ell}(V(\lambda))$ be the sum of $(\ell+1)$-dimensional irreducible $U(\mathfrak{g}_i)$-submodules of $V(\lambda)$, $W_i^{\ell}(V(\lambda)) \coloneqq \bigoplus_{\ell'\ge \ell}I_i^{\ell'}(V(\lambda))$, $I_i^{\ell}(\mathcal{B}(\lambda)) \coloneqq \{b\in \mathcal{B}(\lambda)\mid \varepsilon_i(b)+\varphi_i(b)=\ell\}$, and $W_i^{\ell}(\mathcal{B}(\lambda)) \coloneqq \{b\in \mathcal{B}(\lambda)\mid \varepsilon_i(b)+\varphi_i(b)\ge\ell\}$. Then 
\[
W_i^{\ell}(V(\lambda))=\sum_{b\in W_i^{\ell}(\mathcal{B}(\lambda))}\mathbb{C}\Xi_{\lambda}^{\rm low}(b),
\]
and, for $b\in I_i^{\ell}(\mathcal{B}(\lambda))$,
\begin{align*}
f_i ^k \cdot \Xi_{\lambda}^{\rm low}(b) &\in \mathbb{C}^\times \Xi_{\lambda}^{\rm low}(\tilde{f}_i ^k b) + W_i^{\ell+1}(V(\lambda)),\\
e_i ^k \cdot \Xi_{\lambda}^{\rm low}(b) &\in \mathbb{C}^\times \Xi_{\lambda}^{\rm low}(\tilde{e}_i ^k b) + W_i^{\ell+1}(V(\lambda)).
\end{align*}
\end{enumerate}
\end{prop}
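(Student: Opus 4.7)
My plan for part (1) is to identify $\ker \pi_\lambda$ explicitly in the perfect basis. By iterating Proposition \ref{p:rightaction}, I would first establish that
\[
U(\mathfrak{u}^-) f_i^k = \bigoplus_{b \in \mathcal{B}(\infty),\ \varepsilon_i^\ast(b) \ge k} \mathbb{C}\,\Xi^{\rm low}(b)
\]
for all $i \in I$ and $k \ge 0$. The inclusion $\subseteq$ would be immediate because right multiplication by $f_i^k$ carries $\Xi^{\rm low}(b')$ into the span of $\Xi^{\rm low}(b'')$ with $\varepsilon_i^\ast(b'') \ge \varepsilon_i^\ast((\tilde{f}_i^\ast)^k b') = \varepsilon_i^\ast(b')+k \ge k$. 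For $\supseteq$, given $b$ with $\varepsilon_i^\ast(b) \ge k$, Proposition \ref{p:rightaction} applied to $(\tilde{e}_i^\ast)^k b$ would express a nonzero scalar multiple of $\Xi^{\rm low}(b)$ as $\Xi^{\rm low}((\tilde{e}_i^\ast)^k b) \cdot f_i^k$ minus corrections whose $\varepsilon_i^\ast$ strictly exceeds that of $b$; these corrections would already lie in $U(\mathfrak{u}^-) f_i^k$ by downward induction (valid because $\varepsilon_i^\ast$ is bounded on each weight space). Combined with $\ker \pi_\lambda = \sum_i U(\mathfrak{u}^-) f_i^{\langle \lambda, h_i\rangle + 1}$ and the description of $\widetilde{\mathcal{B}}(\lambda)$ in Remark \ref{r:Btilde}, this would give $\ker \pi_\lambda = \bigoplus_{b \notin \widetilde{\mathcal{B}}(\lambda)} \mathbb{C}\,\Xi^{\rm low}(b)$, settling (1).

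For part (2) I would first push condition $\text{(iii)}'_l$ down to $V(\lambda)$: for $b \in \mathcal{B}(\lambda)$ with lift $\tilde{b} \in \widetilde{\mathcal{B}}(\lambda)$ via Proposition \ref{p:crystal basis}, applying $\pi_\lambda$ yields
\[
f_i^k \cdot \Xi_\lambda^{\rm low}(b) \in \mathbb{C}^\times \Xi_\lambda^{\rm low}(\tilde{f}_i^k b) + \sum_{b''} \mathbb{C}\,\Xi_\lambda^{\rm low}(b''),
\]
where $b''$ ranges over elements of $\mathcal{B}(\lambda)$ with $\wt(b'') = \wt(\tilde{f}_i^k b)$ and $\varepsilon_i(b'') > \varepsilon_i(\tilde{f}_i^k b)$. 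The shared weight fixes $\varphi_i - \varepsilon_i$, and since $\tilde{f}_i^k$ preserves $\varepsilon_i + \varphi_i$, each such $b''$ lies in $W_i^{\ell+1}(\mathcal{B}(\lambda))$. Next I would prove $W_i^\ell(V(\lambda)) = \sum_{b \in W_i^\ell(\mathcal{B}(\lambda))} \mathbb{C}\,\Xi_\lambda^{\rm low}(b)$ by downward induction on $\ell$: for the top $b^{\mathrm{top}}$ of a $\mathfrak{g}_i$-string of length $\ell+1$, the $h_i$-eigenvalue of $\Xi_\lambda^{\rm low}(b^{\mathrm{top}})$ equals $\ell$, forcing $\Xi_\lambda^{\rm low}(b^{\mathrm{top}}) \in W_i^\ell(V(\lambda))$ by the $\mathfrak{sl}_2$-weight decomposition (only irreducibles of dimension $\ge \ell+1$ carry a weight-$\ell$ vector). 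The $f_i$-triangularity, together with the inductive identification of $W_i^{\ell+1}(V(\lambda))$, would propagate this to $\Xi_\lambda^{\rm low}(\tilde{f}_i^k b^{\mathrm{top}})$ for every $k$, and the equality follows from the standard dimension count $|W_i^\ell(\mathcal{B}(\lambda))| = \dim W_i^\ell(V(\lambda))$ provided by the correspondence between $\mathfrak{g}_i$-strings in $\mathcal{B}(\lambda)$ and $\mathfrak{g}_i$-isotypic components of $V(\lambda)$.

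The $e_i$-triangularity would then be extracted by analyzing the quotient $\bar V := V(\lambda) / W_i^{\ell+1}(V(\lambda))$, whose maximal $h_i$-eigenvalue is $\ell$. For each top $b^{\mathrm{top},\alpha}$ of a $\mathfrak{g}_i$-string of length $\ell+1$, the image of $\Xi_\lambda^{\rm low}(b^{\mathrm{top},\alpha})$ in $\bar V$ has weight $\ell$ and is therefore killed by $e_i$ for weight reasons, while its $f_i$-iterates are proportional to the images of $\Xi_\lambda^{\rm low}(\tilde{f}_i^k b^{\mathrm{top},\alpha})$ by the $f_i$-triangularity. The $(\ell+1)$-dimensional subspace they span is therefore a $\mathfrak{g}_i$-stable copy of the $(\ell+1)$-dimensional $\mathfrak{sl}_2$-irreducible, and the standard $\mathfrak{sl}_2$ identity for $e_i^{k'} f_i^k$ applied to a highest weight vector produces the desired inclusion $e_i^{k'} \Xi_\lambda^{\rm low}(b) \in \mathbb{C}^\times \Xi_\lambda^{\rm low}(\tilde{e}_i^{k'} b) + W_i^{\ell+1}(V(\lambda))$.

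The hard part will be organizing the simultaneous downward induction in part (2) so that the weight argument for the string top, the $f_i$-triangularity, and the inductive description of $W_i^{\ell+1}(V(\lambda))$ bootstrap without circularity; once this structural picture is in hand, the $e_i$-analogue, not directly controlled by the perfect-basis axioms, reduces cleanly to $\mathfrak{sl}_2$-representation theory on the concrete submodule of $\bar V$ above.
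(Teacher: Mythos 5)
Your proof is correct and closely mirrors the paper's, with two small tactical swaps that are worth noting. For part (1), both reduce to showing that $\{\Xi^{\rm low}(b) \mid \varepsilon_i^\ast(b)\ge k\}$ is a $\mathbb{C}$-basis of $U(\mathfrak{u}^-)f_i^k$. The paper gets the inclusion $U(\mathfrak{u}^-)f_i^k\subseteq \tilde{U}_{i,k}\coloneqq\sum_{\varepsilon_i^\ast(b)\ge k}\mathbb{C}\Xi^{\rm low}(b)$ from Proposition \ref{p:rightaction} and then forces equality by dimension counting against Kashiwara's Proposition \ref{p:f_decomposition}; you instead prove the reverse inclusion constructively, by downward induction on $\varepsilon_i^\ast$ within each finite-dimensional weight component, peeling $\Xi^{\rm low}(b)$ out of $\Xi^{\rm low}((\tilde{e}_i^\ast)^k b)\cdot f_i^k$ modulo higher-$\varepsilon_i^\ast$ terms. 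That is a valid alternative and avoids Proposition \ref{p:f_decomposition} in this step. For part (2), both proofs use the pushed-down triangularity \eqref{f_action}, an $\mathfrak{sl}_2$-weight argument, and descending induction on $\ell$, but close the $W$-equality differently: the paper passes at each stage to the quotient $V(\lambda)/W_i^{\ell+1}(V(\lambda))$, reducing to the top-level case $\ell=\ell_0$ where the triangularity has no error term and spanning is immediate, whereas you establish the containment $\sum_{b\in W_i^\ell(\mathcal{B}(\lambda))}\mathbb{C}\Xi_\lambda^{\rm low}(b)\subseteq W_i^\ell(V(\lambda))$ by the weight argument and then invoke the dimension count $|W_i^\ell(\mathcal{B}(\lambda))|=\dim_\mathbb{C} W_i^\ell(V(\lambda))$. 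That count does hold (the $i$-strings of $\mathcal{B}(\lambda)$ are in bijection with the $\mathfrak{g}_i$-isotypic components because $\mathcal{B}(\lambda)$ and $V(\lambda)$ have the same character), so your route is sound, though it leans on a crystal-theoretic fact that the paper's quotient-passing makes unnecessary. Your worry about circularity in the induction is unfounded: the combinatorial form of the $f_i$-triangularity at level $\ell$ needs only \eqref{f_action} and weight bookkeeping, its upgrade to a statement modulo $W_i^{\ell+1}(V(\lambda))$ uses only the inductive hypothesis at $\ell+1$, and the $W$-equality at $\ell$ then follows; your $e_i$-argument on $\bar{V}$ spells out precisely what the paper compresses into its closing appeal to $\mathfrak{sl}_2(\mathbb{C})$-representation theory.
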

\begin{proof}
We first prove the assertion that the set $\{\Xi^{\rm low}(b) \mid b\in \mathcal{B}(\infty),\ \varepsilon_i ^{\ast}(b) \ge k\}$ forms a $\mathbb{C}$-basis of $U(\mathfrak{u}^-)f_i ^k$, which implies part (1) by  Remark \ref{r:Btilde}. Set $\tilde{U}_{i, k} \coloneqq \sum_{b \in \mathcal{B}(\infty);\ \varepsilon_i^{\ast}(b)\ge k}\mathbb{C}\Xi^{\rm low}(b)$. Then we have $U(\mathfrak{u}^-) f_i ^k \subset \tilde{U}_{i, k}$, by Proposition \ref{p:rightaction}. On the other hand, Proposition \ref{p:f_decomposition} implies that $\dim_\mathbb{C} (U(\mathfrak{u}^-) f_i ^k \cap U(\mathfrak{u}^-)_{\bf d})=\# \{b\in \mathcal{B}(\infty)_{\bf d} \mid \varepsilon_i^{\ast}(b)\ge k\}$ for all ${\bf d} = (d_i)_{i \in I} \in \mathbb{Z}^I _{\ge 0}$, where $\mathcal{B}(\infty)_{\bf d} \coloneqq \{b \in \mathcal{B}(\infty) \mid \wt(b) = -\sum_{i \in I} d_i \alpha_i\}$. This completes a proof of our assertion. By Proposition \ref{p:crystal basis} and the condition $\text{(iii)}'_l$ for lower perfect bases, we have
\begin{align}
f_i ^k \cdot \Xi_{\lambda}^{\rm low}(b) \in \mathbb{C}^\times \Xi_{\lambda}^{\rm low}(\tilde{f}_i ^k b) + \sum_{\substack{b^\prime \in \mathcal{B}(\lambda);\ {\rm wt}(b^\prime) = {\rm wt}(\tilde{f}_i ^k b),\\ \varepsilon_i (b^\prime) > \varepsilon_i (\tilde{f}_i ^k b)}} \mathbb{C} \Xi_{\lambda}^{\rm low}(b^\prime)\label{f_action}
\end{align}
for all $i\in I$, $b\in \mathcal{B}(\lambda)$, and $k\in \mathbb{Z}_{\ge 0}$. 
Fix $i\in I$ and let $\ell_0$ be the maximal integer $\ell$ such that $W_i^{\ell}(V(\lambda))\neq 0$. Then 
\begin{align*}
W_i^{\ell_0}(V(\lambda))=I_i^{\ell_0}(V(\lambda)).\label{sl2string}
\end{align*}
Thus $\varepsilon_i(b) = 0$ for all $b \in B(\lambda)$ with $\langle\wt (b), h_i\rangle=\ell_0$. Hence \eqref{f_action} implies that $f_i ^k \cdot \Xi_{\lambda}^{\rm low}(b) \in \mathbb{C}^\times \Xi_{\lambda}^{\rm low}(\tilde{f}_i ^k b)$ for all $k\in \mathbb{Z}_{\ge 0}$ and $b\in B(\lambda)$ with $\langle\wt (b), h_i\rangle=\ell_0$. Therefore $W_i^{\ell_0}(V(\lambda))$ is spanned by the elements $\{\Xi_{\lambda}^{\rm low}(b)\mid b\in W_i^{\ell_0}(\mathcal{B}(\lambda))\}$. By using descending induction on $\ell$ and replacing $V(\lambda)$ with $V(\lambda)/W_i^{\ell+1}(V(\lambda))$ in the argument above, we prove that $W_i^{\ell}(V(\lambda))$ is spanned by the elements $\{\Xi_{\lambda}^{\rm low}(b)\mid b\in W_i^{\ell}(\mathcal{B}(\lambda))\}$ for all $\ell$. This proves the first half of part {\rm (2)}. The latter half of part {\rm (2)} follows from \eqref{f_action}, the first half of {\rm (2)}, and the representation theory of $\mathfrak{sl}_2(\mathbb{C})$.
\end{proof}

From now on,  we review the main results of \cite{FN} and \cite{Kav}.

\vspace{2mm}\begin{prop}[{See \cite[Propositions 3.2.3 and 3.2.5]{Kas4}}]\label{Demazure crystal}
Let ${\bf i} = (i_1, \ldots, i_r)$ be a reduced word for $w \in W$, and $\lambda \in P_+$.
\begin{enumerate}
\item[{\rm (1)}] The subset \[\mathcal{B}_w(\lambda) \coloneqq \{\tilde{f}_{i_1} ^{a_1} \cdots \tilde{f}_{i_r} ^{a_r} b_\lambda \mid a_1, \ldots, a_r \in \mathbb{Z}_{\ge 0}\} \setminus \{0\} \subset \mathcal{B}(\lambda)\] is independent of the choice of a reduced word ${\bf i}$.
\item[{\rm (2)}] The subset \[\mathcal{B}_w(\infty) \coloneqq \{\tilde{f}_{i_1} ^{a_1} \cdots \tilde{f}_{i_r} ^{a_r} b_\infty \mid a_1, \ldots, a_r \in \mathbb{Z}_{\ge 0}\} \subset \mathcal{B}(\infty)\] is independent of the choice of a reduced word ${\bf i}$.
\item[{\rm (3)}] The equality $\pi_\lambda(\mathcal{B}_w(\infty)) = \mathcal{B}_w(\lambda) \cup \{0\}$ holds; hence $\pi_\lambda$ induces a bijective map $\pi_\lambda \colon \widetilde{\mathcal{B}}_w(\lambda) \rightarrow \mathcal{B}_w(\lambda)$, where $\widetilde{\mathcal{B}}_w(\lambda) \coloneqq \mathcal{B}_w(\infty) \cap \widetilde{\mathcal{B}}(\lambda)$.
\end{enumerate}
\end{prop}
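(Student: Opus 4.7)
The plan is to prove (1) by identifying $\mathcal{B}_w(\lambda)$ with the indexing set of a natural basis of the Demazure module $V_w(\lambda)\subset V(\lambda)$, which depends only on $w$ and not on the chosen reduced word; to prove (2) by a sufficiently-dominant lifting argument via the projections $\pi_\lambda$; and to prove (3) directly from the intertwining of $\pi_\lambda$ with the Kashiwara operators.

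For (1), I would introduce the crystal Demazure operator on subsets $S\subset\mathcal{B}(\lambda)$ by $\mathcal{D}_i(S)\coloneqq\{\tilde{f}_i^k b\mid b\in S,\ k\ge 0\}\setminus\{0\}$, so that $\mathcal{B}_w(\lambda)=\mathcal{D}_{i_1}\cdots \mathcal{D}_{i_r}\{b_\lambda\}$ for the fixed reduced word $(i_1,\dots,i_r)$. The central claim, to prove by induction on $\ell(w)$, is
\[
V_w(\lambda)=\bigoplus_{b\in\mathcal{B}_w(\lambda)}\mathbb{C}\,G^{\rm low}_\lambda(b).
\]
The base case $w=e$ is immediate since $V_e(\lambda)=\mathbb{C}v_\lambda=\mathbb{C}G^{\rm low}_\lambda(b_\lambda)$. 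For the inductive step, if $w'=s_i w$ with $\ell(w')=\ell(w)+1$, then $V_{w'}(\lambda)=U(\mathfrak{g}_i)\cdot V_w(\lambda)$. Decompose $V(\lambda)$ into $\mathfrak{g}_i$-isotypic pieces via Proposition \ref{p:compatibility} (2); the triangular formulas there for $f_i^k$ and $e_i^k$ acting on the lower perfect basis let me identify $V_w(\lambda)$ with an ``upper segment'' of each $i$-string, and applying the full $U(\mathfrak{g}_i)$ completes each such string. In the basis $\{G^{\rm low}_\lambda(b)\}$ the resulting span is exactly $\bigoplus_{b\in \mathcal{D}_i\mathcal{B}_w(\lambda)}\mathbb{C}\,G^{\rm low}_\lambda(b)$. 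Since $V_w(\lambda)$ is defined intrinsically from $w$ and the lower perfect basis is canonical, $\mathcal{B}_w(\lambda)$ is reduced-word-independent.

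For (2), given $b=\tilde{f}_{i_1}^{a_1}\cdots \tilde{f}_{i_r}^{a_r}b_\infty$, by Remark \ref{r:Btilde} we may choose $\lambda$ sufficiently dominant so that $b\in\widetilde{\mathcal{B}}(\lambda)$. Proposition \ref{p:crystal basis} (1)--(2) then gives $\pi_\lambda(b)=\tilde{f}_{i_1}^{a_1}\cdots \tilde{f}_{i_r}^{a_r}b_\lambda$, and the bijectivity of $\pi_\lambda$ on $\widetilde{\mathcal{B}}(\lambda)$ combined with (1) shows that $b$ depends only on $w$. This yields (2). For (3), the inclusion $\pi_\lambda(\mathcal{B}_w(\infty))\subseteq\mathcal{B}_w(\lambda)\cup\{0\}$ is immediate from the intertwining property and $\pi_\lambda(b_\infty)=b_\lambda$; conversely any $b'\in\mathcal{B}_w(\lambda)$ is of the form $\tilde{f}_{i_1}^{a_1}\cdots \tilde{f}_{i_r}^{a_r}b_\lambda=\pi_\lambda(\tilde{f}_{i_1}^{a_1}\cdots \tilde{f}_{i_r}^{a_r}b_\infty)$, so lies in $\pi_\lambda(\mathcal{B}_w(\infty))$. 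The restriction of $\pi_\lambda$ to $\widetilde{\mathcal{B}}_w(\lambda)$ is bijective onto $\mathcal{B}_w(\lambda)$ because it is the restriction of the bijection $\widetilde{\mathcal{B}}(\lambda)\xrightarrow{\sim}\mathcal{B}(\lambda)$ of Proposition \ref{p:crystal basis} (1).

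The main obstacle is the inductive step of (1): controlling how the lower perfect basis propagates under the $U(\mathfrak{g}_i)$-action when passing from $V_w(\lambda)$ to $V_{s_iw}(\lambda)$. Proposition \ref{p:compatibility} (2) is tailor-made for this, as its triangularity lets us treat each $i$-string independently, but one must verify that $V_w(\lambda)$ really does intersect each $i$-string in an upper portion of the form $\{\tilde{f}_i^j b \mid 0\le j\le \varphi_i(b)\}$ for some $b$ of maximal $\varepsilon_i$ in that string--this is precisely where the length hypothesis $\ell(s_iw)=\ell(w)+1$ is used, and it is the crucial combinatorial point ensuring that $\mathcal{D}_i\mathcal{B}_w(\lambda)=\mathcal{B}_{s_iw}(\lambda)$.
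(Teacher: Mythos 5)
The paper does not give its own proof of this proposition; it is cited verbatim from \cite[Propositions 3.2.3 and 3.2.5]{Kas4}. Your reconstruction is in the right spirit, and parts (2) and (3) are essentially correct. Just tighten (2) slightly: given $b$ written via the reduced word $\mathbf{i}$ and a second reduced word $\mathbf{i}'$, pick $\lambda$ dominant enough that $b\in\widetilde{\mathcal{B}}(\lambda)$ (Remark \ref{r:Btilde}), use (1) to write $\pi_\lambda(b)=\tilde f_{i'_1}^{a'_1}\cdots\tilde f_{i'_r}^{a'_r}b_\lambda$, set $b'\coloneqq\tilde f_{i'_1}^{a'_1}\cdots\tilde f_{i'_r}^{a'_r}b_\infty$, note $\pi_\lambda(b')=\pi_\lambda(b)\neq 0$ by Proposition \ref{p:crystal basis} (2) so $b'\in\widetilde{\mathcal{B}}(\lambda)$, and conclude $b'=b$ from injectivity of $\pi_\lambda$ on $\widetilde{\mathcal{B}}(\lambda)$.

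The real gap is in the inductive step of (1), and you have correctly located it without closing it. Saying that Proposition \ref{p:compatibility} (2) is ``tailor-made'' and that applying $U(\mathfrak{g}_i)$ ``completes each string'' is an assertion, not an argument: the lower basis is compatible only with the filtration by $W_i^\ell(V(\lambda))$, not with the direct-sum decomposition into $I_i^\ell$-pieces, and the error terms in the $f_i^k$-formula live in $W_i^{\ell+1}(V(\lambda))$, which need not be contained in $U(\mathfrak{g}_i)V_w(\lambda)$. The paper runs precisely this induction in the Claim inside the proof of Proposition \ref{p:pos_D}, and there the step is closed by invoking the positivity property (i) through Lemma \ref{positivity} (ensuring nonnegative combinations of lower basis vectors do not degenerate under $f_i^k$). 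If you run the same argument for the lower global basis you must therefore either appeal explicitly to the positivity of the canonical basis in all types --- a nontrivial input --- or, as Kashiwara does in the cited source, avoid positivity altogether by working over $\mathbb{Z}[q,q^{-1}]$ with the balanced integral form of $V_q(\lambda)$.

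Finally, watch for a soft circularity: Proposition \ref{p:pos_D} and the notion of property (D) already use the set $\mathcal{B}_w(\lambda)$ and quote \cite[Lemma 3.2.1]{Kas4}, so you cannot simply cite Proposition \ref{p:pos_D}. Using Proposition \ref{p:compatibility} is safe (its proof is independent of the present proposition), but the inductive claim you need is exactly the Claim proved inside Proposition \ref{p:pos_D}, so you are effectively re-proving that claim for the global basis rather than citing it.
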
\vspace{2mm}

The subsets $\mathcal{B}_w (\lambda), \mathcal{B}_w(\infty)$ are called {\it Demazure crystals}. 

Since 
\begin{align}\label{covering lambda}
\mathcal{B}(\infty) = \bigcup_{\lambda \in P_+} \widetilde{\mathcal{B}}(\lambda) 
\end{align}
by \cite[Corollary 4.4.5]{Kas2}, we deduce that 
\begin{equation}\label{covering lambda w}
\begin{aligned}
\mathcal{B}_w(\infty) &= \bigcup_{\lambda \in P_+} \mathcal{B}_w(\infty) \cap \widetilde{\mathcal{B}}(\lambda)\\
&= \bigcup_{\lambda \in P_+} \widetilde{\mathcal{B}}_w(\lambda).
\end{aligned}
\end{equation}
Let $\{\Xi^{\rm up} _\lambda (b) \mid b \in \mathcal{B}(\lambda)\} \subset H^0(G/B, \mathcal{L}_\lambda) = V(\lambda)^\ast$ denote the dual basis of $\{\Xi^{\rm low} _\lambda (b) \mid b \in \mathcal{B}(\lambda)\} \subset V(\lambda)$.

\vspace{2mm}\begin{prop}\label{dual canonical lambda}
Let $\lambda \in P_+$.
\begin{enumerate}
\item[{\rm (1)}] The $\mathbb{C}$-basis ${\bf B}^{\rm up}(\lambda) = \{\Xi^{\rm up} _\lambda (b) \mid b \in \mathcal{B}(\lambda)\} \subset V(\lambda)^\ast$ is a perfect basis.
\item[{\rm (2)}] Set $\tau_\lambda \coloneqq \Xi^{\rm up} _\lambda (b_\lambda) \in H^0(G/B, \mathcal{L}_\lambda)$. Then the section $\tau_\lambda$ does not vanish on $U^-\ (\hookrightarrow G/B)$; in particular, the restriction $(\tau/\tau_\lambda)|_{U^-}$ belongs to $\mathbb{C}[U^-]$ for all $\tau \in H^0(G/B, \mathcal{L}_\lambda)$. 
\item[{\rm (3)}] A map $\iota_\lambda \colon H^0(G/B, \mathcal{L}_\lambda) \rightarrow \mathbb{C}[U^-]$ defined by $\iota_\lambda(\tau) \coloneqq (\tau/\tau_\lambda)|_{U^-}$ for $\tau \in H^0(G/B, \mathcal{L}_\lambda)$ is injective.
\item[{\rm (4)}] The element $\Xi^{\rm up}(b)$ is identical to $\iota_\lambda(\Xi^{\rm up} _\lambda(\pi_\lambda(b)))$ for all $b \in \widetilde{\mathcal{B}}(\lambda)$.
\item[{\rm (5)}] The following equalities hold: 
\begin{align*}
&\{\Xi^{\rm up}(b) \mid b \in \mathcal{B}(\infty)\} = \bigcup_{\lambda \in P_+} \{\iota_\lambda(\Xi^{\rm up} _\lambda (b)) \mid b \in \mathcal{B}(\lambda)\},\ {\it and}\\
&\mathbb{C}[U^-] = \bigcup_{\lambda \in P_+} \iota_\lambda(H^0(G/B, \mathcal{L}_\lambda)).
\end{align*}
\end{enumerate}
\end{prop}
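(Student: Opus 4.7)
The plan is to prove parts (2) and (3) from the Borel--Weil realization of sections, to deduce (1) by dualizing the compatibility established in Proposition \ref{p:compatibility} (2), and then to obtain (4) and (5) by identifying $\iota_\lambda$ with the dual of $\pi_\lambda \colon U(\mathfrak{u}^-) \twoheadrightarrow V(\lambda)$ under the Hopf algebra isomorphism $\mathbb{C}[U^-] \simeq U(\mathfrak{u}^-)_{\rm gr}^\ast$.

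For (2), I identify $H^0(G/B, \mathcal{L}_\lambda) = V(\lambda)^\ast$ so that a section $\tau$ corresponds to the regular function $u \mapsto \tau(u v_\lambda)$ on the open subset $U^- \subset G/B$. Since $\Xi^{\rm low}(b_\infty) \in U(\mathfrak{u}^-)_{(0, \ldots, 0)} = \mathbb{C} \cdot 1$ and the normalization $\langle \tau_\infty, 1 \rangle = 1$ forces $\Xi^{\rm low}(b_\infty) = 1$, we have $\Xi^{\rm low}_\lambda(b_\lambda) = \pi_\lambda(1) = v_\lambda$, and hence $\tau_\lambda = \Xi^{\rm up}_\lambda(b_\lambda)$ is the functional $v_\lambda^\ast$ dual to $v_\lambda$. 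The identity $u v_\lambda = v_\lambda + (\text{strictly lower weight terms})$ for every $u \in U^-$ now gives $\tau_\lambda(u v_\lambda) = 1 \neq 0$, so $\iota_\lambda(\tau) = (\tau/\tau_\lambda)|_{U^-}$ is regular on $U^-$. Part (3) follows because $U^-$ is Zariski dense in $G/B$, so a section vanishing on $U^-$ must be zero.

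For (1), Proposition \ref{p:compatibility} (2) gives an explicit description of the action of $f_i^k$ on $\{\Xi^{\rm low}_\lambda(b)\}$ modulo the $\mathfrak{sl}_2$-filtration $W_i^{\ell+1}(V(\lambda))$; dualizing this in each weight space and using that $\varepsilon_i(\Xi^{\rm up}_\lambda(b)) = \varepsilon_i(b)$ (which itself follows by iterating the duality, since the top term survives at each step), one obtains
\[
f_i \cdot \Xi^{\rm up}_\lambda(b) \in \mathbb{C}^\times \Xi^{\rm up}_\lambda(\tilde{e}_i b) + (V(\lambda)^\ast)^{<\varepsilon_i(b)-1,\, i}
\]
whenever $\tilde{e}_i b \neq 0$, and $f_i \cdot \Xi^{\rm up}_\lambda(b) = 0$ otherwise. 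This is precisely axiom (iii) of Definition \ref{definition of perfect bases for modules}, with $\tilde{e}_i$ on the basis induced from the crystal structure of $\mathcal{B}(\lambda)$; axioms (i), (ii), (iv) are immediate from the weight grading, the identification $\tau_\lambda = v_\lambda^\ast$ above, and the injectivity of $\tilde{e}_i$ on the crystal $\mathcal{B}(\lambda)$.

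For (4), I verify that under $\mathbb{C}[U^-] \simeq U(\mathfrak{u}^-)_{\rm gr}^\ast$ the map $\iota_\lambda$ sends $\tau \in V(\lambda)^\ast$ to the functional $x \mapsto \tau(\pi_\lambda(x))$, since evaluating $\iota_\lambda(\tau)$ at $\exp(t_1 X_1) \cdots \exp(t_n X_n)$ and differentiating at $t_j = 0$ yields $\tau(X_1 \cdots X_n v_\lambda)$. For $b \in \widetilde{\mathcal{B}}(\lambda)$ and $x \in U(\mathfrak{u}^-)$, expand $x = \sum_{b' \in \mathcal{B}(\infty)} c_{b'} \Xi^{\rm low}(b')$; by Proposition \ref{p:compatibility} (1), $\pi_\lambda(\Xi^{\rm low}(b')) = \Xi^{\rm low}_\lambda(\pi_\lambda(b'))$ for $b' \in \widetilde{\mathcal{B}}(\lambda)$ and vanishes otherwise, so
\[
\iota_\lambda(\Xi^{\rm up}_\lambda(\pi_\lambda(b)))(x) = \Xi^{\rm up}_\lambda(\pi_\lambda(b))(\pi_\lambda(x)) = c_b = \Xi^{\rm up}(b)(x),
\]
proving (4). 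The first equality of (5) then combines (4) with $\mathcal{B}(\infty) = \bigcup_{\lambda \in P_+} \widetilde{\mathcal{B}}(\lambda)$ from (\ref{covering lambda}); for the second, given $f = \sum_j a_j \Xi^{\rm up}(b_j) \in \mathbb{C}[U^-]$, choose $\lambda$ with $\langle \lambda, h_i \rangle \geq \max_j \varepsilon_i^\ast(b_j)$ for every $i \in I$, so that all $b_j \in \widetilde{\mathcal{B}}(\lambda)$ by Remark \ref{r:Btilde}, and then $f = \iota_\lambda\bigl(\sum_j a_j \Xi^{\rm up}_\lambda(\pi_\lambda(b_j))\bigr)$. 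The main subtlety is the identification $\iota_\lambda(\tau) \leftrightarrow \tau \circ \pi_\lambda$ under the Hopf algebra isomorphism; once this is in hand, parts (4) and (5) reduce to bookkeeping with the perfect basis axioms.
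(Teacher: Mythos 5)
Your proof is correct and follows essentially the route the paper intends (the paper itself is terse, deferring parts (2) and (4) to the proof of \cite[Lemma 4.5]{FN}): identify $\iota_\lambda$ with the transpose of $\pi_\lambda$ under the Hopf duality $\mathbb{C}[U^-]\simeq U(\mathfrak{u}^-)^\ast_{\rm gr}$, dualize the lower-basis formulas, and combine with $\mathcal{B}(\infty)=\bigcup_{\lambda}\widetilde{\mathcal{B}}(\lambda)$ for (5). One small efficiency gain in the paper's version of part (1): it dualizes \eqref{f_action} directly, whose error terms are already stated in terms of $\varepsilon_i$-values and therefore match the filtration $(V(\lambda)^\ast)^{<k,i}$ of Definition \ref{definition of perfect bases for modules} immediately, whereas your route through Proposition \ref{p:compatibility}~(2) and the $\mathfrak{sl}_2$-filtration $W_i^{\ell+1}$ requires one extra (straightforward but unstated) translation between the two filtrations.
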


\begin{proof}
Part (1) is an immediate consequence of the definition of ${\bf B}^{\rm up}(\lambda)$ and \eqref{f_action}. Parts (2), (4) are proved in a way similar to the proof of \cite[Lemma 4.5]{FN}. Since $U^-$ is regarded as an open subvariety of $G/B$, we have $(\tau/\tau_\lambda)|_{U^-} \neq 0$ for all nonzero sections $\tau \in H^0(G/B, \mathcal{L}_\lambda)$, which implies part (3). Since $\{\Xi^{\rm up}(b) \mid b \in \mathcal{B}(\infty)\}$ is a $\mathbb{C}$-basis of $\mathbb{C}[U^-]$, part (5) follows by part (4) and equation (\ref{covering lambda}).
\end{proof}

We consider the following property (D) for a lower perfect basis $\{\Xi^{\rm low}(b) \mid b \in \mathcal{B}(\infty)\}$ (see also Proposition \ref{p:compatibility} (1)):
\begin{enumerate}
\item[(D)] the set $\{\Xi_{\lambda}^{\rm low}(b) \mid b \in \mathcal{B}_w(\lambda)\}$ forms a $\mathbb{C}$-basis of the Demazure module $V_w(\lambda)$.
\end{enumerate}
A perfect basis of $\mathbb{C}[U^-]$ will be said to \emph{have the property (D)} if its dual basis has the property (D). 

\vspace{2mm}\begin{rem}\label{r:D_example}\normalfont
The upper global basis and the dual semicanonical basis have the property (D) (\cite[Proposition 3.2.3]{Kas4} and \cite[Theorem 7.1]{Sav}, respectively). We show in Section 4 (Proposition \ref{p:pos_D}) that the specialization of the KLR-basis at $q=1$ also has the property (D). 
\end{rem}\vspace{2mm}

Since $U^- \cap X(w)$ is a closed subvariety of $U^-$, the restriction map $\eta_w \colon \mathbb{C}[U^-] \twoheadrightarrow \mathbb{C}[U^- \cap X(w)]$ is surjective. For $b \in \mathcal{B}(\infty)$, let $\Xi_w ^{\rm up} (b) \in \mathbb{C}[U^- \cap X(w)]$ denote the image of $\Xi^{\rm up} (b) \in \mathbb{C}[U^-]$ under the restriction map $\eta_w$. By abuse of notation, we denote by $\tau_\lambda \in H^0(X(w), \mathcal{L}_\lambda)$ the restriction of $\tau_\lambda \in H^0(G/B, \mathcal{L}_\lambda)$. By Proposition \ref{dual canonical lambda} (2), the section $\tau_\lambda$ does not vanish on $U^- \cap X(w)\ (\hookrightarrow X(w))$; hence a map $H^0(X(w), \mathcal{L}_\lambda) \rightarrow \mathbb{C}[U^- \cap X(w)]$, $\tau \mapsto (\tau/\tau_\lambda)|_{(U^- \cap X(w))}$, is well-defined, which we also denote by $\iota_\lambda$. Since $U^- \cap X(w)$ is an open subvariety of $X(w)$, we see that the map $\iota_\lambda \colon H^0(X(w), \mathcal{L}_\lambda) \rightarrow \mathbb{C}[U^- \cap X(w)]$ is injective. For a perfect basis $\{\Xi^{\rm up}(b) \mid b \in \mathcal{B}(\infty)\}$ with the property (D), let $\{\Xi^{\rm up} _{\lambda, w}(b) \mid b \in \mathcal{B}_w(\lambda)\} \subset H^0(X(w), \mathcal{L}_\lambda) = V_w(\lambda)^\ast$ be the dual basis of $\{\Xi^{\rm low} _\lambda(b) \mid b \in \mathcal{B}_w(\lambda)\} \subset V_w(\lambda)$. It is obvious that $\tau_\lambda = \Xi^{\rm up} _{\lambda, w}(b_\lambda)$ in $H^0(X(w), \mathcal{L}_\lambda)$.

\vspace{2mm}\begin{cor}\label{vanishing}
Let ${\bf B}^{\rm up} = \{\Xi^{\rm up}(b) \mid b \in \mathcal{B}(\infty)\} \subset \mathbb{C}[U^-]$ be a perfect basis with the property (D).
\begin{enumerate}
\item[{\rm (1)}] The following equality holds: \[\mathbb{C}[U^- \cap X(w)] = \bigcup_{\lambda \in P_+} \iota_\lambda(H^0(X(w), \mathcal{L}_\lambda)).\] 
\item[{\rm (2)}] The element $\Xi^{\rm up} _w(b)$ is identical to $\iota_\lambda(\Xi^{\rm up} _{\lambda, w}(\pi_\lambda(b)))$ for all $b \in \widetilde{\mathcal{B}}_w(\lambda)$. 
\item[{\rm (3)}] The set $\{\Xi^{\rm up} _w(b) \mid b\in\mathcal{B}_w(\infty)\}$ forms a $\mathbb{C}$-basis of $\mathbb{C}[U^- \cap X(w)]$.
\item[{\rm (4)}] The element $\Xi^{\rm up} _w(b)$ is identical to $0$ unless $b \in \mathcal{B}_w (\infty)$.
\end{enumerate}
\end{cor}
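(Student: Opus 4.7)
The plan is to deduce all four assertions from Proposition \ref{dual canonical lambda} by means of the evident commutative diagram
\[\eta_w \circ \iota_\lambda = \iota_\lambda \circ \mathrm{res}_{w, \lambda},\]
where $\mathrm{res}_{w, \lambda} \colon H^0(G/B, \mathcal{L}_\lambda) \to H^0(X(w), \mathcal{L}_\lambda)$ is the natural restriction of sections; this identity is immediate because both routes send a section $\tau$ to $(\tau/\tau_\lambda)|_{U^- \cap X(w)}$, which is well-defined thanks to Proposition \ref{dual canonical lambda}(2). I would first establish (2) and (4), and then derive (1) and (3) from them.

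For (2), I would apply $\eta_w$ to the identity $\Xi^{\rm up}(b) = \iota_\lambda(\Xi^{\rm up}_\lambda(\pi_\lambda(b)))$ of Proposition \ref{dual canonical lambda}(4) and use the diagram to get $\Xi^{\rm up}_w(b) = \iota_\lambda(\mathrm{res}_{w, \lambda}(\Xi^{\rm up}_\lambda(\pi_\lambda(b))))$. Under the identification $H^0(X(w), \mathcal{L}_\lambda) = V_w(\lambda)^\ast$, the map $\mathrm{res}_{w, \lambda}$ becomes the pullback of linear functionals along $V_w(\lambda) \hookrightarrow V(\lambda)$; since property (D) supplies the basis $\{\Xi^{\rm low}_\lambda(b^\prime) \mid b^\prime \in \mathcal{B}_w(\lambda)\}$ of $V_w(\lambda)$ and $\Xi^{\rm up}_\lambda(\pi_\lambda(b))$ pairs as Kronecker delta with the full basis of $V(\lambda)$, the restricted functional is exactly $\Xi^{\rm up}_{\lambda, w}(\pi_\lambda(b))$. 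For (4), I would take $b \in \mathcal{B}(\infty) \setminus \mathcal{B}_w(\infty)$ and, via (\ref{covering lambda}), choose $\lambda \in P_+$ with $b \in \widetilde{\mathcal{B}}(\lambda)$; were $\pi_\lambda(b)$ to lie in $\mathcal{B}_w(\lambda)$, Proposition \ref{Demazure crystal}(3) would furnish $b_0 \in \widetilde{\mathcal{B}}_w(\lambda) \subset \widetilde{\mathcal{B}}(\lambda)$ with $\pi_\lambda(b_0) = \pi_\lambda(b)$, and injectivity of $\pi_\lambda$ on $\widetilde{\mathcal{B}}(\lambda)$ (Proposition \ref{p:crystal basis}(1)) would force $b = b_0 \in \mathcal{B}_w(\infty)$, a contradiction. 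Hence $\pi_\lambda(b) \notin \mathcal{B}_w(\lambda)$, so $\Xi^{\rm up}_\lambda(\pi_\lambda(b))$ annihilates the property-(D) basis of $V_w(\lambda)$, and the diagram yields $\Xi^{\rm up}_w(b) = 0$.

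Part (1) then follows from the chain
\begin{align*}
\mathbb{C}[U^- \cap X(w)] &= \eta_w(\mathbb{C}[U^-]) = \bigcup_{\lambda \in P_+} \iota_\lambda(\mathrm{res}_{w, \lambda}(H^0(G/B, \mathcal{L}_\lambda))) \\
&\subset \bigcup_{\lambda \in P_+} \iota_\lambda(H^0(X(w), \mathcal{L}_\lambda)) \subset \mathbb{C}[U^- \cap X(w)],
\end{align*}
using surjectivity of $\eta_w$, Proposition \ref{dual canonical lambda}(5), and the diagram; all inclusions must therefore be equalities. For (3), spanning is immediate from (1), (2), and (\ref{covering lambda w}) together with (4). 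For linear independence, any finite relation $\sum c_b \Xi^{\rm up}_w(b) = 0$ indexed by $b \in \mathcal{B}_w(\infty)$ involves indices all lying in $\widetilde{\mathcal{B}}_w(\lambda)$ for a single large enough $\lambda$ — here one uses $\widetilde{\mathcal{B}}(\lambda) \subset \widetilde{\mathcal{B}}(\lambda + \mu)$ for $\mu \in P_+$, transparent from Remark \ref{r:Btilde} — and by (2) these elements are the images, under the injective map $\iota_\lambda$, of the basis $\{\Xi^{\rm up}_{\lambda, w}(b^\prime) \mid b^\prime \in \mathcal{B}_w(\lambda)\}$ of $H^0(X(w), \mathcal{L}_\lambda)$. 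The one nontrivial step is (4), where genuine crystal-theoretic input — the combination of Propositions \ref{p:crystal basis}(1) and \ref{Demazure crystal}(3) with property (D) — is needed to separate Demazure from non-Demazure indices; everything else is essentially formal from the diagram and Proposition \ref{dual canonical lambda}.
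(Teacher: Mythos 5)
Your proposal is correct and follows essentially the same route as the paper: the commutative square $\eta_w \circ \iota_\lambda = \iota_\lambda \circ \eta_w$ (your $\mathrm{res}_{w,\lambda}$), Proposition \ref{dual canonical lambda}, Proposition \ref{Demazure crystal}(3) together with bijectivity of $\pi_\lambda$ on $\widetilde{\mathcal{B}}(\lambda)$, property (D), and the coverings \eqref{covering lambda}, \eqref{covering lambda w}. The only cosmetic differences are the order in which you settle the four parts and the added justification via Remark \ref{r:Btilde} that a single $\lambda$ dominates any finite subset of $\mathcal{B}_w(\infty)$, a point the paper leaves implicit.
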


\begin{proof}
Consider the following diagram of subvarieties: 
\begin{align*}
\xymatrix{U^- \ar@{^{(}->}[r] & G/B \\
U^- \cap X(w) \ar@{^{(}->}[u] \ar@{^{(}->}[r] & X(w). \ar@{^{(}->}[u]}
\end{align*}
From this, we see that the following diagram is commutative: 
\begin{align*}
\xymatrix{\mathbb{C}[U^-] \ar@{->>}[d]^-{\eta_w} & H^0(G/B, \mathcal{L}_{\lambda}) \ar@{_{(}->}[l]^-{\iota_\lambda} \ar@{->>}[d]^-{\eta_w}\\
\mathbb{C}[U^- \cap X(w)] & H^0(X(w), \mathcal{L}_{\lambda})\ar@{_{(}->}[l]^-{\iota_\lambda},}
\end{align*}
where we denote by $\eta_w \colon H^0(G/B, \mathcal{L}_\lambda) \twoheadrightarrow H^0(X(w), \mathcal{L}_\lambda)$ the restriction map. Hence part (2) is an immediate consequence of Proposition \ref{dual canonical lambda} (4) and of the equality $\eta_w (\Xi^{\rm up} _{\lambda}(\pi_\lambda(b))) = \Xi^{\rm up} _{\lambda, w}(\pi_\lambda(b))$ for $b \in \widetilde{\mathcal{B}}_w(\lambda)$. Also, we see that 
\begin{align*}
\mathbb{C}[U^- \cap X(w)] &= \eta_w(\mathbb{C}[U^-])\\
&= \bigcup_{\lambda \in P_+} \eta_w(\iota_\lambda(H^0(G/B, \mathcal{L}_\lambda)))\quad({\rm by\ Proposition}\ \ref{dual canonical lambda}\ (5))\\
&= \bigcup_{\lambda \in P_+} \iota_\lambda(\eta_w(H^0(G/B, \mathcal{L}_\lambda)))\\
&= \bigcup_{\lambda \in P_+} \iota_\lambda(H^0(X(w), \mathcal{L}_\lambda)).
\end{align*}
This proves part (1). Since $\{\Xi^{\rm up} _{\lambda, w}(\pi_\lambda(b)) \mid b \in \widetilde{\mathcal{B}}_w(\lambda)\}$ forms a $\mathbb{C}$-basis of $H^0(X(w), \mathcal{L}_{\lambda})$, we deduce by parts (1), (2) and equation (\ref{covering lambda w}) that $\{\Xi^{\rm up} _w(b) \mid \mathcal{B}_w(\infty)\}$ spans $\mathbb{C}[U^- \cap X(w)]$. For an arbitrary finite subset $\{b_1, \ldots, b_k\} \subset \mathcal{B}_w(\infty)$, take $\lambda \in P_+$ such that $b_1, \ldots, b_k \in \widetilde{B}(\lambda)$. Since $\{\Xi^{\rm up} _{\lambda, w}(\pi_\lambda(b_1)), \ldots, \Xi^{\rm up} _{\lambda, w}(\pi_\lambda(b_k))\}$ is linearly independent, it follows by part (2) that $\{\Xi^{\rm up} _w(b_1), \ldots, \Xi^{\rm up} _w(b_k)\}$ is also linearly independent. From these, we obtain part (3). For $b \in \mathcal{B}(\infty) \setminus \mathcal{B}_w (\infty)$, we take $\lambda \in P_+$ such that $b \in \widetilde{\mathcal{B}}(\lambda)$. Since $\pi_\lambda \colon \widetilde{\mathcal{B}}(\lambda) \xrightarrow{\sim} \mathcal{B}(\lambda)$ is bijective and $\pi_\lambda(\widetilde{\mathcal{B}}_w(\lambda)) = \mathcal{B}_w(\lambda)$ by Proposition \ref{Demazure crystal} (3), we have $\pi_\lambda(b) \notin \mathcal{B}_w(\lambda)$, which implies that $\eta_w(\Xi^{\rm up} _{\lambda}(\pi_\lambda(b))) = 0$ by (D). Hence it holds that 
\begin{align*}
\Xi^{\rm up} _w(b) &= \eta_w(\Xi^{\rm up}(b))\\
&= \eta_w(\iota_\lambda(\Xi^{\rm up} _{\lambda}(\pi_\lambda(b))))\quad({\rm by\ Proposition}\ \ref{dual canonical lambda}\ (4))\\
&= \iota_\lambda(\eta_w(\Xi^{\rm up} _{\lambda}(\pi_\lambda(b))))\\
&= \iota_\lambda(0) = 0,
\end{align*}
which implies part (4). This proves the corollary.
\end{proof}

\begin{rem}\normalfont
Some formulas with respect to the character of $\mathbb{C}[U^- \cap X(w)]$ are given by \cite[\S\S 12.1]{Kum}. By Corollary \ref{vanishing} (3), these formulas can be regarded as those with respect to the character of $\mathcal{B}_w(\infty)$ (see \cite[\S\S 4.7]{Jos}).
\end{rem}

\vspace{2mm}\begin{defi}\label{d:parametrization}\normalfont
Let ${\bf i} = (i_1, \ldots, i_r) \in I^r$ be a reduced word for $w \in W$. 
\begin{enumerate}
\item[{\rm (1)}] For $b \in \mathcal{B}_w(\infty)$, define $\Phi_{\bf i} (b) = (a_1, \ldots, a_r) \in \mathbb{Z}_{\ge 0} ^r$ by
\begin{align*}
&a_1 \coloneqq \max\{a \in \mathbb{Z}_{\ge 0} \mid \tilde{e}_{i_1} ^a b \neq 0\},\\
&a_2 \coloneqq \max\{a \in \mathbb{Z}_{\ge 0} \mid \tilde{e}_{i_2} ^a \tilde{e}_{i_1} ^{a_1} b \neq 0\},\\
&\ \vdots\\
&a_r \coloneqq \max\{a \in \mathbb{Z}_{\ge 0} \mid \tilde{e}_{i_r} ^a \tilde{e}_{i_{r-1}} ^{a_{r-1}} \cdots \tilde{e}_{i_1} ^{a_1} b \neq 0\}.
\end{align*}
The $\Phi_{\bf i}(b)$ is called the {\it Littelmann string parametrization} of $b$ with respect to ${\bf i}$ (see \cite[Section 1]{Lit}). The map $\Phi_{\bf i} \colon \mathcal{B}_w(\infty) \rightarrow \mathbb{Z}_{\ge 0} ^r$ is indeed an injection.
\item[{\rm (2)}] For $b \in \mathcal{B}_w(\infty)$, define $\Psi_{\bf i} (b) = (a_r ^\prime, \ldots, a_1 ^\prime) \in \mathbb{Z}_{\ge 0} ^r$ by
\begin{align*}
&a_r ^\prime \coloneqq \max\{a \in \mathbb{Z}_{\ge 0} \mid (\tilde{e}_{i_r} ^\ast)^a b \neq 0\},\\
&a_{r-1} ^\prime \coloneqq \max\{a \in \mathbb{Z}_{\ge 0} \mid (\tilde{e}_{i_{r-1}} ^\ast)^a (\tilde{e}_{i_r} ^\ast)^{a_r ^\prime} b \neq 0\},\\
&\ \vdots\\
&a_1 ^\prime \coloneqq \max\{a \in \mathbb{Z}_{\ge 0} \mid (\tilde{e}_{i_1} ^\ast)^a (\tilde{e}_{i_2} ^\ast)^{a_2 ^\prime} \cdots (\tilde{e}_{i_r} ^\ast)^{a_r ^\prime} b \neq 0\}.
\end{align*}
The map $\Psi_{\bf i}$ is called the {\it Kashiwara embedding} of $\mathcal{B}_w(\infty)$. The map $\Psi_{\bf i} \colon \mathcal{B}_w(\infty) \rightarrow \mathbb{Z}_{\ge 0} ^r$ is indeed an injection (see \cite[Sections 2 and 3]{Kas4}).
\end{enumerate}
\end{defi}

\vspace{2mm}\begin{rem}\normalfont\label{parametrization for modules}
Through the bijective map $\pi_\lambda \colon \widetilde{\mathcal{B}}_w(\lambda) \xrightarrow{\sim} \mathcal{B}_w(\lambda)$ in Proposition \ref{Demazure crystal} (3), the maps $\Phi_{\bf i}$ and $\Psi_{\bf i}$ induce the Littelmann string parametrization for $\mathcal{B}_w(\lambda)$ and the Kashiwara embedding of $\mathcal{B}_w(\lambda)$.
\end{rem}

\vspace{2mm}\begin{defi}\normalfont
Let ${\bf i} \in I^r$ be a reduced word for $w \in W$, and $\lambda \in P_+$. 
\begin{enumerate}
\item[{\rm (1)}] Define a subset $\mathcal{S}_{\bf i} ^{(\lambda, w)} \subset \mathbb{Z}_{>0} \times \mathbb{Z}^r$ by \[\mathcal{S}_{\bf i} ^{(\lambda, w)} \coloneqq \bigcup_{k>0} \{(k, \Phi_{\bf i}(b)) \mid b \in \widetilde{\mathcal{B}}_w (k\lambda)\},\] and denote by $\mathcal{C}_{\bf i} ^{(\lambda, w)} \subset \mathbb{R}_{\ge 0} \times \mathbb{R}^r$ the smallest real closed cone containing $\mathcal{S}_{\bf i} ^{(\lambda, w)}$. Let us define a subset $\Delta_{\bf i} ^{(\lambda, w)} \subset \mathbb{R}^r$ by \[\Delta_{\bf i} ^{(\lambda, w)} \coloneqq \{{\bf a} \in \mathbb{R}^r \mid (1, {\bf a}) \in \mathcal{C}_{\bf i} ^{(\lambda, w)}\}.\] This subset $\Delta_{\bf i} ^{(\lambda, w)}$ is called the {\it Littelmann string polytope} for $\mathcal{B}_w(\lambda)$ with respect to ${\bf i}$ (see \cite[Definition 3.5]{Kav} and \cite[Section 1]{Lit}).
\item[{\rm (2)}] Define a subset $\widetilde{\mathcal{S}}_{\bf i} ^{(\lambda, w)} \subset \mathbb{Z}_{>0} \times \mathbb{Z}^r$ by \[\widetilde{\mathcal{S}}_{\bf i} ^{(\lambda, w)} \coloneqq \bigcup_{k>0} \{(k, \Psi_{\bf i}(b)) \mid b \in \widetilde{\mathcal{B}}_w (k\lambda)\},\] and denote by $\widetilde{\mathcal{C}}_{\bf i} ^{(\lambda, w)} \subset \mathbb{R}_{\ge 0} \times \mathbb{R}^r$ the smallest real closed cone containing $\widetilde{\mathcal{S}}_{\bf i} ^{(\lambda, w)}$. Let us define a subset $\widetilde{\Delta}_{\bf i} ^{(\lambda, w)} \subset \mathbb{R}^r$ by \[\widetilde{\Delta}_{\bf i} ^{(\lambda, w)} \coloneqq \{{\bf a} \in \mathbb{R}^r \mid (1, {\bf a}) \in \widetilde{\mathcal{C}}_{\bf i} ^{(\lambda, w)}\}.\] This subset $\widetilde{\Delta}_{\bf i} ^{(\lambda, w)}$ is called the {\it Nakashima-Zelevinsky polyhedral realization of} $\mathcal{B}_w(\lambda)$ associated with ${\bf i}$ (see \cite[\S\S 2.3]{FN}, \cite[Sections 3 and 4]{Nak1}, \cite[\S\S 3.1]{Nak2}, and \cite[Section 3]{NZ}).
\end{enumerate}
\end{defi}\vspace{2mm}

\begin{prop}[{See \cite[\S\S 3.2 and Theorem 3.10]{BZ} and \cite[Section 1]{Lit}}]\label{string lattice points}
The following hold.
\begin{enumerate}
\item[{\rm (1)}] The real closed cone $\mathcal{C}_{\bf i} ^{(\lambda, w)}$ is a rational convex polyhedral cone, that is, there exists a finite number of rational points ${\bf a}_1, \ldots, {\bf a}_l \in \mathbb{Q}_{\ge 0} \times \mathbb{Q}^r$ such that $\mathcal{C}_{\bf i} ^{(\lambda, w)} = \mathbb{R}_{\ge 0}{\bf a}_1 + \cdots + \mathbb{R}_{\ge 0} {\bf a}_l$. Moreover the equality $\mathcal{S}_{\bf i} ^{(\lambda, w)} = \mathcal{C}_{\bf i} ^{(\lambda, w)} \cap (\mathbb{Z}_{>0} \times \mathbb{Z}^r)$ holds. 
\item[{\rm (2)}] The set $\Delta_{\bf i} ^{(\lambda, w)}$ is a rational convex polytope, and the equality $\Phi_{\bf i} (\widetilde{\mathcal{B}}_w(\lambda)) = \Delta_{\bf i} ^{(\lambda, w)} \cap \mathbb{Z}^r$ holds.
\end{enumerate}
\end{prop}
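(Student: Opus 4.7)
The plan is to reduce both claims to the rationality and polyhedrality of the full string cone for $\mathcal{B}(\infty)$, which is the content of the cited works [BZ] and [Lit]. First, I would extend the reduced word ${\bf i}$ for $w$ to a reduced word ${\bf i}' = (i_1, \ldots, i_r, i_{r+1}, \ldots, i_N)$ for the longest element $w_0 \in W$, so that $\Phi_{{\bf i}'}$ is defined on all of $\mathcal{B}(\infty)$. By Littelmann and Berenstein-Zelevinsky, $\Phi_{{\bf i}'}(\mathcal{B}(\infty))$ is precisely the set of lattice points of a rational convex polyhedral cone $\mathcal{C}_{{\bf i}'}^{(\infty)} \subset \mathbb{R}_{\ge 0}^N$, cut out by an explicit system of linear inequalities. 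By Proposition \ref{Demazure crystal}, $b \in \mathcal{B}_w(\infty)$ holds exactly when the last $N-r$ coordinates of $\Phi_{{\bf i}'}(b)$ vanish; consequently $\Phi_{\bf i}(\mathcal{B}_w(\infty)) = \mathcal{C}_{\bf i}^{(w)} \cap \mathbb{Z}^r$, where $\mathcal{C}_{\bf i}^{(w)} \coloneqq \mathcal{C}_{{\bf i}'}^{(\infty)} \cap (\mathbb{R}_{\ge 0}^r \times \{0\}^{N-r})$ is a rational polyhedral subcone of $\mathbb{R}_{\ge 0}^r$.

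Next I would cut $\mathcal{C}_{\bf i}^{(w)}$ down to a polytope using the characterization $\widetilde{\mathcal{B}}(k\lambda) = \{b \in \mathcal{B}(\infty) \mid \varepsilon_i^\ast(b) \le k\langle\lambda, h_i\rangle\ \text{for all } i \in I\}$ from Remark \ref{r:Btilde}. The BZ/Lit analysis also provides, for each $i \in I$, a linear form $L_i$ on $\mathbb{R}^N$ whose restriction to $\mathcal{C}_{{\bf i}'}^{(\infty)} \cap \mathbb{Z}^N$ realizes $\varepsilon_i^\ast \circ \Phi_{{\bf i}'}^{-1}$. It follows that $\Delta_{\bf i}^{(\lambda, w)} = \mathcal{C}_{\bf i}^{(w)} \cap \bigcap_{i \in I}\{L_i \le \langle\lambda, h_i\rangle\}$ is a rational convex polytope, and $\mathcal{C}_{\bf i}^{(\lambda, w)}$ is the closure of $\{(k, k{\bf a}) \mid k > 0,\ {\bf a} \in \Delta_{\bf i}^{(\lambda, w)}\}$, a rational polyhedral cone.

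Finally, to establish the saturation $\mathcal{S}_{\bf i}^{(\lambda, w)} = \mathcal{C}_{\bf i}^{(\lambda, w)} \cap (\mathbb{Z}_{>0} \times \mathbb{Z}^r)$, it suffices to prove $\Phi_{\bf i}(\widetilde{\mathcal{B}}_w(k\lambda)) = k\Delta_{\bf i}^{(\lambda, w)} \cap \mathbb{Z}^r$ for every $k > 0$; both sides are cut out of $\mathbb{Z}^r$ by the same system of inequalities, namely those defining $\mathcal{C}_{\bf i}^{(w)}$ together with $L_i \le k\langle\lambda, h_i\rangle$ for $i \in I$. Part (2) is the case $k = 1$. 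I expect the principal obstacle to lie in the second step, namely producing the linear forms $L_i$ on the string cone and verifying that they together with the inequalities defining $\mathcal{C}_{{\bf i}'}^{(\infty)}$ suffice to characterize $\Phi_{\bf i}(\widetilde{\mathcal{B}}_w(k\lambda))$; this is the substantive combinatorial content of [BZ, Section 3.2] and [Lit, Section 4], which we invoke here as a black box rather than reprove.
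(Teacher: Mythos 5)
The paper gives no proof of this proposition: it is stated with a citation to [BZ, \S\S 3.2 and Theorem 3.10] and [Lit, Section 1] and treated as known. Your reconstruction of the underlying argument is sound in outline --- extend ${\bf i}$ to a reduced word for $w_0$, use the string cone for $\mathcal{B}(\infty)$, identify $\mathcal{B}_w(\infty)$ with the face where the last $N-r$ string coordinates vanish, and then cut down by the $\lambda$-constraints --- and this is indeed how the cited results yield the statement.

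The one claim I would not let stand is that for each $i\in I$ there is a \emph{linear} form $L_i$ on $\mathbb{R}^N$ whose restriction to $\mathcal{C}_{{\bf i}'}^{(\infty)}\cap\mathbb{Z}^N$ realizes $\varepsilon_i^\ast\circ\Phi_{{\bf i}'}^{-1}$. That is false in general: $\varepsilon_i^\ast$ is only \emph{piecewise} linear in string coordinates (it is linear in the Kashiwara embedding coordinates, but the change of parametrization between the two is itself a tropical, not linear, map). Already in type $A_2$ with ${\bf i}'=(1,2,1)$ the functions $\varepsilon_i^\ast$ involve a $\max$. What Littelmann and Berenstein--Zelevinsky actually provide is a different, genuinely linear, system of $\lambda$-inequalities indexed by positions $k\in\{1,\dots,N\}$ (of the shape $a_k \le \langle\lambda, h_{i_k}\rangle - \sum_{l>k}\langle\alpha_{i_l},h_{i_k}\rangle a_l$), which together with the string cone cut out the same lattice set as the conditions $\varepsilon_i^\ast\le\langle\lambda,h_i\rangle$ --- the equivalence holds for the system as a whole, not inequality-by-inequality, and the number of inequalities on the two sides is $N$ versus $|I|$. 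With that substitution your argument goes through: these inequalities are homogeneous of degree one jointly in $({\bf a},\lambda)$, so $\Phi_{\bf i}(\widetilde{\mathcal{B}}_w(k\lambda))=k\Delta_{\bf i}^{(\lambda,w)}\cap\mathbb{Z}^r$ for all $k$, which gives the saturation in (1) and the lattice-point statement in (2).
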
\vspace{2mm}

\begin{prop}[{See \cite[Corollary 4.3]{FN}}]\label{polyhedral lattice points}
The following hold.
\begin{enumerate}
\item[{\rm (1)}] The real closed cone $\widetilde{\mathcal{C}}_{\bf i} ^{(\lambda, w)}$ is a rational convex polyhedral cone, and the equality $\widetilde{\mathcal{S}}_{\bf i} ^{(\lambda, w)} = \widetilde{\mathcal{C}}_{\bf i} ^{(\lambda, w)} \cap (\mathbb{Z}_{>0} \times \mathbb{Z}^r)$ holds. 
\item[{\rm (2)}] The set $\widetilde{\Delta}_{\bf i} ^{(\lambda, w)}$ is a rational convex polytope, and the equality $\Psi_{\bf i} (\widetilde{\mathcal{B}}_w(\lambda)) = \widetilde{\Delta}_{\bf i} ^{(\lambda, w)} \cap \mathbb{Z}^r$ holds.
\end{enumerate}
\end{prop}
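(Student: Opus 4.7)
The plan is to reduce Proposition \ref{polyhedral lattice points} to Proposition \ref{string lattice points} via Kashiwara's involution $\ast\colon\mathcal{B}(\infty)\to\mathcal{B}(\infty)$. Set ${\bf i}^{\rm op}\coloneqq(i_r,i_{r-1},\ldots,i_1)$, which is a reduced word for $w^{-1}$.

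The first step is to establish the identity
\[
\Psi_{\bf i}(b)=\Phi_{{\bf i}^{\rm op}}(b^\ast)\qquad\text{for all } b\in\mathcal{B}_w(\infty).
\]
This follows by an inductive unwinding of the two recursions in Definition \ref{d:parametrization} together with $\tilde{e}_i^\ast=\ast\circ\tilde{e}_i\circ\ast$: at each stage, $(\tilde{e}_{i_k}^\ast)^a\sigma\neq 0$ if and only if $\tilde{e}_{i_k}^a\sigma^\ast\neq 0$, and the new state produced on the $\Psi_{\bf i}$-side is the $\ast$-image of that on the $\Phi_{{\bf i}^{\rm op}}$-side; hence the resulting $r$-tuples agree coordinate by coordinate.

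The second step is to identify the image $\widetilde{\mathcal{B}}_w(k\lambda)^\ast$ with $\widetilde{\mathcal{B}}_{w^{-1}}(k\lambda)$ as subsets of $\mathcal{B}(\infty)$ for every $k\in\mathbb{Z}_{>0}$. This decomposes into (a) the equality $\mathcal{B}_w(\infty)^\ast=\mathcal{B}_{w^{-1}}(\infty)$, which is natural because $\ast$ acts on $U(\mathfrak{u}^-)$ as the anti-involution reversing PBW-type products $f_{i_1}^{a_1}\cdots f_{i_r}^{a_r}$ (cf.~Proposition \ref{p:*_property}), and (b) reconciling, on $\mathcal{B}_{w^{-1}}(\infty)$, the $\ast$-transformed condition $\varepsilon_i(b)\le k\langle\lambda,h_i\rangle$ with the original condition $\varepsilon_i^\ast(b)\le k\langle\lambda,h_i\rangle$ from Remark \ref{r:Btilde}. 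Granting both, the first step yields the set-theoretic equality $\widetilde{\mathcal{S}}_{\bf i}^{(\lambda,w)}=\mathcal{S}_{{\bf i}^{\rm op}}^{(\lambda,w^{-1})}$, and Proposition \ref{string lattice points} applied to $(\lambda,w^{-1},{\bf i}^{\rm op})$ gives part (1); part (2) then follows by slicing the cone at $k=1$.

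The main obstacle is step (b): within $\mathcal{B}(\infty)$ at large, the subsets $\{b:\varepsilon_i^\ast(b)\le k\langle\lambda,h_i\rangle\}$ and $\{b:\varepsilon_i(b)\le k\langle\lambda,h_i\rangle\}$ are genuinely distinct, so one must show that their intersections with the two Demazure crystals correspond under $\ast$. This requires a careful analysis of the interaction between $\ast$, the Demazure structure, and the characterization of $\widetilde{\mathcal{B}}(k\lambda)$; most naturally it proceeds via the observation that both $\widetilde{\mathcal{B}}_{w^{-1}}(k\lambda)$ and $\widetilde{\mathcal{B}}_w(k\lambda)^\ast$ are parametrizations of the same Demazure module $V_{w^{-1}}(k\lambda)$. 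If this route proves too technical, an alternative is to invoke the Nakashima--Zelevinsky explicit polyhedral description of $\Psi_{\bf i}(\mathcal{B}(\infty))$ by linear inequalities in the finite type setting, and intersect directly with the Demazure constraint $b\in\mathcal{B}_w(\infty)$ and the $\widetilde{\mathcal{B}}(k\lambda)$-inequalities from Remark \ref{r:Btilde} to cut out the desired rational polyhedral cone.
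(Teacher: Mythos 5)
The paper gives no proof of this proposition; it simply cites \cite[Corollary 4.3]{FN}. So the question is whether your reduction to Proposition \ref{string lattice points} via Kashiwara's involution is sound, and it is not. Your Step~1, the pointwise identity $\Psi_{\bf i}(b)=\Phi_{{\bf i}^{\rm op}}(b^\ast)$, is correct and follows exactly as you say from $\tilde{e}_i^\ast=\ast\circ\tilde{e}_i\circ\ast$. But the set-theoretic equality you need in Step~2, namely $\widetilde{\mathcal{S}}_{\bf i}^{(\lambda,w)}=\mathcal{S}_{{\bf i}^{\rm op}}^{(\lambda,w^{-1})}$, is simply false, and the ``obstacle'' you flag in part~(b) is not a technicality to be surmounted but a genuine refutation of the route. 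The paper's own Example \ref{main example} already shows this: with $G=SL_3$, $w=w_0$ (so $w^{-1}=w_0$ and ${\bf i}^{\rm op}={\bf i}=(1,2,1)$), $\lambda=\alpha_1+\alpha_2$ and $k=1$, the $v_{\bf i}^{\rm high}$ row of the table gives
$\Phi_{\bf i}(\widetilde{\mathcal{B}}(\lambda))=\{(0,0,0),(1,0,0),(0,1,0),(1,1,0),(0,1,1),(2,1,0),(0,2,1),(1,2,1)\}$, while the $\tilde{v}_{\bf i}^{\rm high}$ row gives
$\Psi_{\bf i}(\widetilde{\mathcal{B}}(\lambda))=\{(0,0,0),(1,0,0),(0,1,0),(0,1,1),(1,1,0),(1,1,1),(1,2,0),(1,2,1)\}$.
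The first set contains $(2,1,0)$ and $(0,2,1)$ while the second contains $(1,1,1)$ and $(1,2,0)$, so they are unequal; equivalently $\widetilde{\mathcal{B}}(\lambda)^\ast\neq\widetilde{\mathcal{B}}(\lambda)$ inside $\mathcal{B}(\infty)$.

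For general $w$ the mismatch is even cruder, which shows the problem cannot be patched by a cleverer identification of index sets: $|\widetilde{\mathcal{B}}_w(k\lambda)|=\dim V_w(k\lambda)$ and $|\widetilde{\mathcal{B}}_{w^{-1}}(k\lambda)|=\dim V_{w^{-1}}(k\lambda)$, and these dimensions differ in general. In $SL_3$ one has $\dim V_{s_1s_2}(\omega_2)=3$ but $\dim V_{s_2s_1}(\omega_2)=2$, so no bijection between the $k$-slices exists at all. The Nakashima--Zelevinsky cone for $(\lambda,w,{\bf i})$ is therefore not the string cone for $(\lambda,w^{-1},{\bf i}^{\rm op})$, and indeed the whole point of this paper is that the two families of polytopes are related only after passing to the \emph{lowest}-term valuations and composing with a reversal and a sign, not by an elementary relabelling. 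Your fallback idea of working from an explicit inequality description of the image of $\Psi_{\bf i}$ is closer in spirit to what \cite{FN} actually do in their Corollary~4.3, but note that even with such a description in hand one still needs the saturation statement $\widetilde{\mathcal{S}}_{\bf i}^{(\lambda,w)}=\widetilde{\mathcal{C}}_{\bf i}^{(\lambda,w)}\cap(\mathbb{Z}_{>0}\times\mathbb{Z}^r)$, which is an independent assertion about the semigroup and is not automatic from polyhedrality of the cone.
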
\vspace{2mm}

\begin{rem}\normalfont
By \cite[Theorem 3.10]{BZ} and \cite[Section 1]{Lit}, we obtain a system of explicit linear inequalities defining the string polytope $\Delta_{\bf i} ^{(\lambda, w)}$. In addition, under a certain positivity assumption on ${\bf i}$, a system of explicit linear inequalities defining the Nakashima-Zelevinsky polyhedral realization $\widetilde{\Delta}_{\bf i} ^{(\lambda, w)}$ is given by \cite[Theorem 4.1]{Nak1} and \cite[Proposition 3.1]{Nak2} (see also \cite[Corollary 5.3]{FN}).
\end{rem}\vspace{2mm}

Define a linear automorphism $\omega \colon \mathbb{R} \times \mathbb{R}^r \xrightarrow{\sim} \mathbb{R} \times \mathbb{R}^r$ by $\omega(k, {\bf a}) \coloneqq (k, -{\bf a})$. Recall that $\tau_\lambda = \Xi^{\rm up} _{\lambda, w} (b_\lambda) \in H^0(X(w), \mathcal{L}_\lambda)$. By \cite[Section 4]{Kav}, we obtain the following.

\vspace{2mm}\begin{prop}\label{string polytopes}
Let ${\bf B}^{\rm up} = \{\Xi^{\rm up}(b) \mid b \in \mathcal{B}(\infty)\} \subset \mathbb{C}[U^-]$ be a perfect basis, ${\bf i} \in I^r$ a reduced word for $w \in W$, and $\lambda \in P_+$.
\begin{enumerate}
\item[{\rm (1)}] The Littelmann string parametrization $\Phi_{\bf i} (b)$ is equal to $-v_{\bf i} ^{\rm high}(\Xi^{\rm up} _w (b))$ for all $b \in \mathcal{B}_w(\infty)$.
\item[{\rm (2)}] The equalities $\mathcal{S}_{\bf i} ^{(\lambda, w)} = \omega(S(X(w), \mathcal{L}_\lambda, v_{\bf i} ^{\rm high}, \tau_\lambda))$, $\mathcal{C}_{\bf i} ^{(\lambda, w)} = \omega(C(X(w), \mathcal{L}_\lambda, v_{\bf i} ^{\rm high}, \tau_\lambda))$, and $\Delta_{\bf i} ^{(\lambda, w)} = -\Delta(X(w), \mathcal{L}_\lambda, v_{\bf i} ^{\rm high}, \tau_\lambda)$ hold.
\end{enumerate}
\end{prop}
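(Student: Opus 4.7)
The plan is to prove Part (1) by induction on the length $r$ of the reduced word ${\bf i}$, unpacking $v_{\bf i}^{\rm high}$ via Proposition \ref{valuation Chevalley}(1), and then to derive Part (2) as a formal consequence of Part (1) together with Corollary \ref{vanishing} and Proposition \ref{prop1,val}. Throughout, one uses Proposition \ref{dual canonical lambda}(2) (restricted to $X(w)$) so that $\tau_\lambda$ does not vanish on $U^-\cap X(w)$; hence, up to an inessential nonzero scalar from the identification $\tau_\lambda^{k}=\tau_{k\lambda}$, the rational function $\sigma/\tau_\lambda^{k}$ restricts to $\iota_{k\lambda}(\sigma)\in\mathbb{C}[U^-\cap X(w)]$, which is the actual object on which the valuation is computed.

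For Part (1), write $\Phi_{\bf i}(b)=(n_1,\ldots,n_r)$, so $n_1=\varepsilon_{i_1}(b)$. I would first verify that the leading coordinate $a_1$ of $-v_{\bf i}^{\rm high}(\Xi_w^{\rm up}(b))$ equals $n_1$. The perfect basis condition, combined with \eqref{string1} and \eqref{string2}, gives
\[
f_{i_1}^{a}\cdot\Xi^{\rm up}(b)\in\mathbb{C}^{\times}\,\Xi^{\rm up}(\tilde{e}_{i_1}^{a}b)+\sum_{b'\colon\varepsilon_{i_1}(b')<n_1-a}\mathbb{C}\,\Xi^{\rm up}(b')
\]
for $a\le n_1$, and $f_{i_1}^{a}\cdot\Xi^{\rm up}(b)=0$ for $a>n_1$. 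Applying $\eta_w$ and invoking Corollary \ref{vanishing}(3), (4) --- together with the stability of Demazure crystals under $\tilde{e}_i$, so that $\tilde{e}_{i_1}^{a}b\in\mathcal{B}_w(\infty)$ for $a\le n_1$ --- the leading $\Xi_w^{\rm up}(\tilde{e}_{i_1}^{a}b)$-term survives, and at $a=n_1$ the sum over lower $\varepsilon_{i_1}$ is empty, so $f_{i_1}^{n_1}\cdot\Xi_w^{\rm up}(b)=c\,\Xi_w^{\rm up}(\tilde{e}_{i_1}^{n_1}b)$ with $c\in\mathbb{C}^{\times}$. Restricting further to $X(w_{\ge 2})$ and using the Demazure-crystal input $\tilde{e}_{i_1}^{n_1}b\in\mathcal{B}_{w_{\ge 2}}(\infty)$, the restriction equals $c\,\Xi_{w_{\ge 2}}^{\rm up}(\tilde{e}_{i_1}^{n_1}b)\ne 0$. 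The induction hypothesis applied to $w_{\ge 2}$, the reduced word $(i_2,\ldots,i_r)$, and $\tilde{e}_{i_1}^{n_1}b$ --- whose first string coordinate is $n_2$ by construction --- then supplies $(n_2,\ldots,n_r)$.

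For Part (2), the three equalities are formal consequences of the semigroup equality by passing to the real closed cone and slicing at height $1$. For $b\in\widetilde{\mathcal{B}}_w(k\lambda)$, set $\sigma_b:=\Xi_{k\lambda,w}^{\rm up}(\pi_{k\lambda}(b))\in H^0(X(w),\mathcal{L}_\lambda^{\otimes k})$; then Corollary \ref{vanishing}(2) gives $\iota_{k\lambda}(\sigma_b)=\Xi_w^{\rm up}(b)$, and Part (1) yields $v_{\bf i}^{\rm high}(\sigma_b/\tau_\lambda^{k})=-\Phi_{\bf i}(b)$, so $(k,\Phi_{\bf i}(b))\in\omega(S(X(w),\mathcal{L}_\lambda,v_{\bf i}^{\rm high},\tau_\lambda))$. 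Conversely, any nonzero $\sigma\in H^0(X(w),\mathcal{L}_\lambda^{\otimes k})$ expands as $\sum_{b}c_b\sigma_b$ with $b\in\widetilde{\mathcal{B}}_w(k\lambda)$; since $\Phi_{\bf i}$ is injective, the values $-\Phi_{\bf i}(b)$ are pairwise distinct, and Proposition \ref{prop1,val} gives $v_{\bf i}^{\rm high}(\sigma/\tau_\lambda^{k})=-\Phi_{\bf i}(b_0)$ for some $b_0$ with $c_{b_0}\ne 0$, furnishing the reverse inclusion. I expect the main technical hurdle to be verifying that the $f_{i_1}$-action used in Proposition \ref{valuation Chevalley}(1), defined via partial differentiation on $\mathbb{C}[t_1,\ldots,t_r]$, is intertwined by $\eta_w$ with the $\mathfrak{u}^-$-action on $\mathbb{C}[U^-]$, and to supply cleanly the Demazure-crystal fact that $\tilde{e}_{i_1}^{\varepsilon_{i_1}(b)}b\in\mathcal{B}_{w_{\ge 2}}(\infty)$ whenever $b\in\mathcal{B}_w(\infty)$; every other step is essentially bookkeeping.
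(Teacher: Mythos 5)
Your overall strategy --- induction on the length $r$ of the reduced word, unpacking $v_{\bf i}^{\rm high}$ through Proposition \ref{valuation Chevalley}(1) and using the Demazure-crystal fact $\tilde{e}_{i_1}^{\varepsilon_{i_1}(b)}b\in\mathcal{B}_{w_{\ge 2}}(\infty)$ --- is precisely the approach of \cite[Section 4]{Kav} and \cite[Theorem 4.1]{FN}, to which the paper appeals for this proposition. You also correctly flag as the technical crux the compatibility of the $\mathfrak{u}_{i_1}^-$-action of \eqref{derivation} with the restriction map $\eta_w$; this holds because, when $\ell(s_{i_1}w)<\ell(w)$, left translation by $U_{i_1}^-$ preserves both $U^-$ and $U^-\cap X(w)$.

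There is, however, a gap in scope. You invoke Corollary \ref{vanishing}(2), (3), (4) throughout, but that corollary is stated and proved only for perfect bases satisfying property (D), while Proposition \ref{string polytopes} assumes only perfectness --- and the remark immediately after Proposition \ref{polyhedral realizations} states explicitly that (D) is \emph{not} needed here. In Part (1), the only input you actually need from Corollary \ref{vanishing}(3) is the non-vanishing $\Xi_w^{\rm up}(\tilde{e}_{i_1}^{n_1}b)\neq 0$; this should instead be carried along in the induction: once the inductive hypothesis gives $\Xi_{w_{\ge 2}}^{\rm up}(\tilde{e}_{i_1}^{n_1}b)\neq 0$, the non-vanishing of $\Xi_w^{\rm up}(\tilde{e}_{i_1}^{n_1}b)$ follows because restriction to $\mathbb{C}[U^-\cap X(w_{\ge 2})]$ factors through $\mathbb{C}[U^-\cap X(w)]$, and then $f_{i_1}^{n_1}\cdot\Xi_w^{\rm up}(b)=c\,\Xi_w^{\rm up}(\tilde{e}_{i_1}^{n_1}b)\neq 0$ forces $\Xi_w^{\rm up}(b)\neq 0$ as well. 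Likewise, in Part (2) you expand a section $\sigma\in H^0(X(w),\mathcal{L}_\lambda^{\otimes k})$ in the basis $\{\Xi_{k\lambda,w}^{\rm up}(b)\mid b\in\mathcal{B}_w(k\lambda)\}$, which is only defined under (D); without that hypothesis one must lift $\sigma$ to $H^0(G/B,\mathcal{L}_{k\lambda})$, expand there via Proposition \ref{dual canonical lambda}(4)--(5), and control which summands survive $\eta_w$, which takes genuinely more care than your proposal supplies. As written, the argument establishes the proposition only under the extra hypothesis (D) that the paper deliberately avoids.
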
\vspace{2mm}

By \cite[Section 4]{FN}, we obtain the following.

\vspace{2mm}\begin{prop}\label{polyhedral realizations}
Let ${\bf B}^{\rm up} = \{\Xi^{\rm up}(b) \mid b \in \mathcal{B}(\infty)\} \subset \mathbb{C}[U^-]$ be a perfect basis, ${\bf i} \in I^r$ a reduced word for $w \in W$, and $\lambda \in P_+$.
\begin{enumerate}
\item[{\rm (1)}] The Kashiwara embedding $\Psi_{\bf i} (b)$ is equal to $-\tilde{v}_{\bf i} ^{\rm high} (\Xi^{\rm up} _w (b))$ for all $b \in \mathcal{B}_w(\infty)$.
\item[{\rm (2)}] The equalities $\widetilde{\mathcal{S}}_{\bf i} ^{(\lambda, w)} = \omega(S(X(w), \mathcal{L}_\lambda, \tilde{v}_{\bf i} ^{\rm high}, \tau_\lambda))$, $\widetilde{\mathcal{C}}_{\bf i} ^{(\lambda, w)} = \omega(C(X(w), \mathcal{L}_\lambda, \tilde{v}_{\bf i} ^{\rm high}, \tau_\lambda))$, and $\widetilde{\Delta}_{\bf i} ^{(\lambda, w)} = -\Delta(X(w), \mathcal{L}_\lambda, \tilde{v}_{\bf i} ^{\rm high}, \tau_\lambda)$ hold.
\end{enumerate}
\end{prop}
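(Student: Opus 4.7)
The plan is to establish part (1) by induction on $r = \ell(w)$, using Proposition \ref{valuation Chevalley} (2) to peel off the coordinates of $-\tilde{v}_{\bf i}^{\rm high}(\Xi^{\rm up}_w(b))$ one at a time and matching them with the recursive definition of $\Psi_{\bf i}$ in Definition \ref{d:parametrization} (2). Part (2) will then follow from (1) by a general valuation-theoretic argument relying on Proposition \ref{prop1,val}.

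For the inductive step, I first derive a right-action analogue of condition (iii)$'$ in Definition \ref{definition of perfect bases}: applying Kashiwara's involution $\ast$ to (iii)$'$ and using Proposition \ref{p:*_property} yields
\begin{equation*}
\Xi^{\rm up}(b) \cdot f_i^k \in \mathbb{C}^\times\, \Xi^{\rm up}((\tilde{e}_i^\ast)^k b) + \sum_{\substack{b^\prime \in \mathcal{B}(\infty);\ \wt(b^\prime)=\wt((\tilde{e}_i^\ast)^k b),\\ \varepsilon_i^\ast(b^\prime) < \varepsilon_i^\ast((\tilde{e}_i^\ast)^k b)}} \mathbb{C}\, \Xi^{\rm up}(b^\prime),
\end{equation*}
together with the vanishing $\Xi^{\rm up}(b) \cdot f_i^k = 0$ for $k > \varepsilon_i^\ast(b)$. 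Setting $a_r^\prime := \varepsilon_{i_r}^\ast(b)$, the leading term has $\varepsilon_{i_r}^\ast$-level equal to zero, so the correction sum is vacuous, and we obtain $\Xi^{\rm up}(b) \cdot f_{i_r}^{a_r^\prime} = c\,\Xi^{\rm up}((\tilde{e}_{i_r}^\ast)^{a_r^\prime} b)$ with $c \in \mathbb{C}^\times$. Since the restriction $\eta_w$ commutes with the right action of $f_{i_r}$ (because $U_{i_r}^-$ preserves $U^- \cap X(w)$ on the right), it follows that $\Xi^{\rm up}_w(b) \cdot f_{i_r}^{a_r^\prime} = c\,\Xi^{\rm up}_w((\tilde{e}_{i_r}^\ast)^{a_r^\prime} b)$ and $\Xi^{\rm up}_w(b) \cdot f_{i_r}^{a_r^\prime+1} = 0$. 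Granted the key stability fact that $(\tilde{e}_{i_r}^\ast)^{\varepsilon_{i_r}^\ast(b)} b \in \mathcal{B}_{w_{\le r-1}}(\infty)$ for all $b \in \mathcal{B}_w(\infty)$, Corollary \ref{vanishing} (4) ensures that this leading term is nonzero, so the first coordinate of $-\tilde{v}_{\bf i}^{\rm high}(\Xi^{\rm up}_w(b))$ is $a_r^\prime = \varepsilon_{i_r}^\ast(b)$, matching the first entry of $\Psi_{\bf i}(b)$. Composing with the further restriction to $X(w_{\le r-1})$ (which sends $\Xi^{\rm up}_w(b^\prime) \mapsto \Xi^{\rm up}_{w_{\le r-1}}(b^\prime)$) produces the nonzero function $c\,\Xi^{\rm up}_{w_{\le r-1}}((\tilde{e}_{i_r}^\ast)^{a_r^\prime} b) \in \mathbb{C}[U^- \cap X(w_{\le r-1})]$, and applying the induction hypothesis to $w_{\le r-1}$ with the reduced word $(i_1, \ldots, i_{r-1})$ yields the remaining coordinates, which by Definition \ref{d:parametrization} (2) are precisely those of $\Psi_{\bf i}(b)$.

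For part (2), under the canonical identification $\mathcal{L}_\lambda^{\otimes k} \simeq \mathcal{L}_{k\lambda}$ (so that $\tau_\lambda^k = \tau_{k\lambda}$), every nonzero $\sigma \in H^0(X(w), \mathcal{L}_\lambda^{\otimes k})$ expands in the basis $\{\Xi^{\rm up}_{k\lambda, w}(\pi_{k\lambda}(b)) \mid b \in \widetilde{\mathcal{B}}_w(k\lambda)\}$, and Corollary \ref{vanishing} (2) identifies $\sigma/\tau_\lambda^k$ with the corresponding nonzero linear combination of $\{\Xi^{\rm up}_w(b)\}_{b \in \widetilde{\mathcal{B}}_w(k\lambda)}$. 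By part (1) and the injectivity of $\Psi_{\bf i}$, these basis elements take pairwise distinct values under $\tilde{v}_{\bf i}^{\rm high}$, so Proposition \ref{prop1,val} yields $\tilde{v}_{\bf i}^{\rm high}(\sigma/\tau_\lambda^k) = -\Psi_{\bf i}(b_0)$ for some $b_0 \in \widetilde{\mathcal{B}}_w(k\lambda)$. This produces the level-by-level equality $\widetilde{\mathcal{S}}_{\bf i}^{(\lambda, w)} = \omega(S(X(w), \mathcal{L}_\lambda, \tilde{v}_{\bf i}^{\rm high}, \tau_\lambda))$, and the remaining equalities for the cones and polytopes follow by passing to the smallest real closed cones and intersecting with $\{1\} \times \mathbb{R}^r$.

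The main obstacle is the stability claim $(\tilde{e}_{i_r}^\ast)^{\varepsilon_{i_r}^\ast(b)} b \in \mathcal{B}_{w_{\le r-1}}(\infty)$ for $b \in \mathcal{B}_w(\infty)$. This is the precise crystal-theoretic compatibility ensuring that the iterative valuation computation in Proposition \ref{valuation Chevalley} (2) tracks $\Psi_{\bf i}$ under the successive restrictions $X(w) \supset X(w_{\le r-1}) \supset \cdots \supset X(w_{\le 1})$; it is a $\ast$-twisted analogue of Proposition \ref{Demazure crystal} (2) whose proof rests on the characterization of $\mathcal{B}_w(\infty)$ recalled from \cite{Kas4}, and it constitutes the representation-theoretic core of the argument.
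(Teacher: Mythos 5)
Your proof takes the same route as the paper: Proposition \ref{polyhedral realizations} is cited to \cite[Section 4]{FN}, and \cite[Theorem 4.1]{FN} is established exactly by the induction you describe --- iterating Proposition \ref{valuation Chevalley}~(2), using the $\ast$-twisted right-action analogue of condition~(iii)$'$ (your displayed formula, which follows correctly from Proposition~\ref{p:*_property} since $(f_i^k\cdot\rho)^\ast=\rho^\ast\cdot f_i^k$), and invoking the crystal stability fact that is already implicit in the well-definedness of $\Psi_{\bf i}$ in Definition~\ref{d:parametrization}~(2). Two small points to tidy: the nonvanishing of $\Xi^{\rm up}_w\bigl((\tilde{e}_{i_r}^\ast)^{a_r'}b\bigr)$ you need is Corollary~\ref{vanishing}~(3), not~(4); and since Corollary~\ref{vanishing} is stated under property~(D), invoking it is in tension with the Remark after the proposition that (D) is unnecessary here. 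A (D)-free replacement is to observe, via Lemma~\ref{l:triangle} and the fact that (D) holds for the upper global basis, that $\Xi^{\rm up}_w(b')\neq 0$ for every $b'\in\mathcal{B}_w(\infty)$ and every perfect basis --- which is the only consequence of Corollary~\ref{vanishing} your induction actually uses.
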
\vspace{2mm}

\begin{rem}\normalfont 
In Propositions \ref{string polytopes} and \ref{polyhedral realizations}, we need not assume the property (D) for a perfect basis (see the proof of \cite[Theorem 4.1]{FN}).
\end{rem}

\section{Perfect bases with positivity properties}

In order to relate the lowest term valuations $v_{\bf i} ^{\rm low}, \tilde{v}_{\bf i} ^{\rm low}$ with the highest term valuations $v_{\bf i} ^{\rm high}, \tilde{v}_{\bf i} ^{\rm high}$, we use a perfect basis ${\bf B}^{\rm up} = \{\Xi ^{\rm up}(b) \mid b \in \mathcal{B}(\infty)\} \subset \mathbb{C}[U^-]$ that has the following positivity properties: 
\begin{enumerate}
\item[{\rm (i)}] the element $(- f_i) \cdot \Xi^{\rm up} (b)$ belongs to $\sum_{b^\prime \in \mathcal{B}(\infty)} \mathbb{R}_{\ge 0} \Xi^{\rm up} (b^\prime)$ for all $b \in \mathcal{B}(\infty)$ and $i \in I$;
\item[{\rm (ii)}] the product $\Xi^{\rm up}(\tilde{f}_i b_\infty) \cdot \Xi^{\rm up}(b)$ belongs to $\sum_{b' \in \mathcal{B}(\infty)} \mathbb{R}_{\ge 0} \Xi^{\rm up}(b')$ for all $b \in \mathcal{B}(\infty)$ and $i \in I$.
\end{enumerate}

\vspace{2mm}\begin{prop}
Let ${\bf B}^{\rm up} = \{\Xi ^{\rm up}(b) \mid b \in \mathcal{B}(\infty)\} \subset \mathbb{C}[U^-]$ be a perfect basis. The positivity properties {\rm (i)}, {\rm (ii)} are equivalent to the following positivity properties {\rm (i)$'$}, {\rm (ii)$'$}, respectively:
\begin{enumerate}
\item[{\rm (i)$'$}] the elements $(-1)^k f_i ^k \cdot \Xi^{\rm up} (b)$ and $(-1)^k \Xi^{\rm up} (b) \cdot f_i ^k$ belong to $\sum_{b^\prime \in \mathcal{B}(\infty)} \mathbb{R}_{\ge 0} \Xi^{\rm up} (b^\prime)$ for all $b \in \mathcal{B}(\infty)$, $i \in I$, and $k \ge 0$;
\item[{\rm (ii)$'$}] the product $\Xi^{\rm up}(\tilde{f}_i ^k b_\infty) \cdot \Xi^{\rm up}(b)$ belongs to $\sum_{b' \in \mathcal{B}(\infty)} \mathbb{R}_{\ge 0} \Xi^{\rm up}(b')$ for all $b \in \mathcal{B}(\infty)$, $i \in I$, and $k \ge 0$.
\end{enumerate}
\end{prop}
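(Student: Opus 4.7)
The plan is to prove $\text{(i)} \Leftrightarrow \text{(i)}'$ and $\text{(ii)} \Leftrightarrow \text{(ii)}'$ separately. In both cases one direction is immediate: taking $k=1$ in (i)$'$ for the left action gives (i), and taking $k=1$ in (ii)$'$ gives (ii). The real content lies in the reverse directions, which I will handle by two slightly different mechanisms: induction together with the $\ast$-symmetry of the perfect basis for (i) $\Rightarrow$ (i)$'$, and a positivity--plus--one-dimensionality argument for (ii) $\Rightarrow$ (ii)$'$.

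For (i) $\Rightarrow$ (i)$'$, I first establish the left-action statement by induction on $k$. Since $(-1)^k f_i^k \cdot \Xi^{\rm up}(b) = (-f_i) \cdot ((-1)^{k-1} f_i^{k-1} \cdot \Xi^{\rm up}(b))$, the inductive hypothesis places the inner expression in $\sum_{b'} \mathbb{R}_{\ge 0}\, \Xi^{\rm up}(b')$, and then (i) applied termwise closes the induction (the convex cone is preserved under the linear operator $-f_i$). For the right-action statement, I exploit the $\ast$-symmetry. From the bimodule structure on $\mathbb{C}[U^-] = U(\mathfrak{u}^-)^\ast_{\rm gr}$, the fact that $\ast$ on $U(\mathfrak{u}^-)$ is an anti-involution fixing the generators $f_i$, and the compatibility between $\ast$ on $U(\mathfrak{u}^-)$ and its dual, a direct pairing computation gives
\[
(f_i \cdot \rho)^\ast = \rho^\ast \cdot f_i \qquad \text{for all } \rho \in \mathbb{C}[U^-].
\]
Iterating, $(f_i^k \cdot \rho)^\ast = \rho^\ast \cdot f_i^k$. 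Applying $\ast$ to the left-action version of (i)$'$ already proved, using Proposition \ref{p:*_property} (so that $\Xi^{\rm up}(b)^\ast = \Xi^{\rm up}(b^\ast)$), and recalling that $\ast \colon \mathcal{B}(\infty) \to \mathcal{B}(\infty)$ is a bijection, I get the desired right-action statement for an arbitrary $b \in \mathcal{B}(\infty)$.

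For (ii) $\Rightarrow$ (ii)$'$, the key observation is Remark \ref{r:length one case}: the homogeneous component $U(\mathfrak{u}^-)^\ast_{{\rm gr},\,k{\bf e}_i}$ is one-dimensional, spanned by $\Xi^{\rm up}(\tilde{f}_i^k b_\infty)$. Since the multiplication on $\mathbb{C}[U^-]$ respects the multigrading and $\mathbb{C}[U^-]$ is an integral domain (being a polynomial ring in the coordinates of $U^-$), the $k$-th power $\Xi^{\rm up}(\tilde{f}_i b_\infty)^k$ is a nonzero element of $U(\mathfrak{u}^-)^\ast_{{\rm gr},\,k{\bf e}_i}$, hence equals $c_k \Xi^{\rm up}(\tilde{f}_i^k b_\infty)$ for a unique $c_k \in \mathbb{C}^\times$. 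Iterating (ii) gives $\Xi^{\rm up}(\tilde{f}_i b_\infty)^k \in \sum_{b'} \mathbb{R}_{\ge 0}\, \Xi^{\rm up}(b')$; combined with the one-dimensionality of the weight component, this forces $c_k > 0$. Therefore, for any $b \in \mathcal{B}(\infty)$,
\[
\Xi^{\rm up}(\tilde{f}_i^k b_\infty) \cdot \Xi^{\rm up}(b) = c_k^{-1}\, \Xi^{\rm up}(\tilde{f}_i b_\infty)^k \cdot \Xi^{\rm up}(b),
\]
and $k$ successive applications of (ii) show that the right-hand side lies in $\sum_{b'} \mathbb{R}_{\ge 0}\, \Xi^{\rm up}(b')$, proving (ii)$'$.

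The most conceptually delicate step is (ii) $\Rightarrow$ (ii)$'$, but it becomes routine once one recognizes that the weight subspace $\mathbb{C}\Xi^{\rm up}(\tilde{f}_i^k b_\infty)$ is one-dimensional; the main thing to be careful about is that the proportionality constant $c_k$ is \emph{positive} rather than merely nonzero, which is what allows the positivity of (ii) to transfer through the normalization. The only other nontrivial bookkeeping is the identity $(f_i \cdot \rho)^\ast = \rho^\ast \cdot f_i$, where the sign in the bimodule action and the anti-involution property of $\ast$ on $U(\mathfrak{u}^-)$ conspire to give the clean statement.
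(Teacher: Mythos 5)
Your proof is correct and for $(\mathrm{ii}) \Rightarrow (\mathrm{ii})'$ it is the paper's argument verbatim: one-dimensionality of $U(\mathfrak{u}^-)^\ast_{\mathrm{gr},\,k\mathbf{e}_i}$ (Remark \ref{r:length one case}) together with the integral-domain property of $\mathbb{C}[U^-]$ gives $\Xi^{\rm up}(\tilde f_i b_\infty)^k = c_k\,\Xi^{\rm up}(\tilde f_i^k b_\infty)$ with $c_k \in \mathbb{C}^\times$, iterating (ii) forces $c_k > 0$, and then a further iteration of (ii) against $\Xi^{\rm up}(b)$ gives $(\mathrm{ii})'$. The paper dispatches $(\mathrm{i}) \Leftrightarrow (\mathrm{i})'$ with ``it follows immediately,'' and your elaboration is exactly what that compresses: the left-action half of $(\mathrm{i})'$ is a one-line induction on $k$, while the right-action half is \emph{not} a formal consequence of (i) by induction alone --- it genuinely uses the $\ast$-stability of the basis, via the bimodule identity $(f_i \cdot \rho)^\ast = \rho^\ast \cdot f_i$, Proposition \ref{p:*_property} ($\Xi^{\rm up}(b)^\ast = \Xi^{\rm up}(b^\ast)$), and the bijectivity of $\ast$ on $\mathcal{B}(\infty)$, which together transport the positive cone from the left action to the right. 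So your proof is a faithful, and slightly more explicit, rendering of the paper's.
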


\begin{proof}
It follows immediately that the property (i) is equivalent to (i)$'$; hence it suffices to prove that the property (ii) implies (ii)$'$. Since $U(\mathfrak{u}^-)^\ast _{{\rm gr}, k{\bf e}_i} = \mathbb{C} \Xi^{\rm up}(\tilde{f}_i ^k b_\infty)$ for $i \in I$ and $k \ge 0$ (see Remark \ref{r:length one case}), we have $\Xi^{\rm up}(\tilde{f}_i b_\infty)^k \in \mathbb{C}^\times \Xi^{\rm up}(\tilde{f}_i ^k b_\infty)$. Then the positivity condition (ii) implies that $\Xi^{\rm up}(\tilde{f}_i b_\infty)^k \in \mathbb{R}_{>0} \Xi^{\rm up}(\tilde{f}_i ^k b_\infty)$; hence we deduce the positivity property (ii)$'$ by (ii).
\end{proof}

In the case that $\mathfrak{g}$ is of simply-laced type, Lusztig proved that  the upper global basis has the positivity properties {\rm (i)} and {\rm (ii)}, by the geometric construction of the lower global basis \cite[Theorem 11.5]{Lus_quivers}.  A desired example for general $\mathfrak{g}$ is given by the specialization of the KLR-basis at $q=1$ (see Example \ref{e:perfect_KLR}), that is, the following holds (although this proposition is an immediate consequence of \cite{KL1, KL2}, we explain a proof for the convenience of the reader).  

\vspace{2mm}\begin{prop}[{\cite{KL1, KL2}}]\label{existence with positivity}\normalfont
The specialization of the KLR-basis at $q=1$ satisfies the positivity properties {\rm (i)} and {\rm (ii)}. 
\end{prop}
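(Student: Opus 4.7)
The plan is to reduce both positivity properties to the exactness of the induction and restriction functors in the KLR module category, using the categorification recalled in Example \ref{e:perfect_KLR}. Write $\mathcal{G}_q\coloneqq \bigoplus_{{\bf d}\in\mathbb{Z}^I_{\geq 0}} G_0(R_{\bf d}\text{-gmod})$, equipped with the induction product. By \cite[Theorem 3.17]{KL1}, this is a $\mathbb{Z}[q^{\pm 1}]$-algebra isomorphic to $U_{q,\mathbb{Z}}(\mathfrak{u}^-)$, and its specialization $\mathcal{G}_q|_{q=1}$ is identified with $\mathbb{C}[U^-]$ with the KLR-basis (at $q=1$) sent to $\{\Xi^{\rm up}(b)\mid b\in\mathcal{B}(\infty)\}$. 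In particular, $\Xi^{\rm up}(\tilde{f}_i b_\infty)$ corresponds to the class of the unique self-dual graded simple $R_{{\bf e}_i}$-module $L(i)$.

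For property (ii), exactness of induction is all that is needed: under the identification above, $\Xi^{\rm up}(\tilde{f}_i b_\infty)\cdot\Xi^{\rm up}(b)$ is the specialization at $q=1$ of $[\Ind^{R_{{\bf d}+{\bf e}_i}}_{R_{{\bf e}_i}\otimes R_{\bf d}}(L(i)\boxtimes L(b))]$. Since this is the class of an honest finite-length graded module, it expands as $\sum_{b'}m_b^{b'}(q)[L(b')]$ with $m_b^{b'}(q)\in\mathbb{Z}_{\geq 0}[q^{\pm 1}]$ given by graded composition multiplicities, and setting $q=1$ preserves nonnegativity.

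Property (i) is obtained by unfolding the Chevalley action through the Hopf-algebra coproduct. Let $\Delta_\mathbb{C}\colon\mathbb{C}[U^-]\to\mathbb{C}[U^-]\otimes\mathbb{C}[U^-]$ denote the coproduct dual to the multiplication on $U(\mathfrak{u}^-)$, and write $\Delta_\mathbb{C}(\rho)=\sum\rho_{(1)}\otimes\rho_{(2)}$. By the very definition $\langle f_i\cdot\rho, y\rangle=-\langle\rho, f_i y\rangle$ one has
\[
(-f_i)\cdot\rho \;=\; \sum \rho_{(1)}(f_i)\,\rho_{(2)}.
\]
On the KLR side, $\Delta_\mathbb{C}$ is induced (at $q=1$) by the exact restriction functor $\Res^{R_{\bf d}}_{R_{{\bf e}_i}\otimes R_{{\bf d}-{\bf e}_i}}$; applied to the simple module $L(b)$, this restriction is a finite-length graded $R_{{\bf e}_i}\otimes R_{{\bf d}-{\bf e}_i}$-module, so its class expands as $\sum c_{b_1, b_2}(q)[L(b_1)\boxtimes L(b_2)]$ with $c_{b_1,b_2}(q)\in\mathbb{Z}_{\geq 0}[q^{\pm 1}]$. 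Specializing at $q=1$ yields $\Delta_\mathbb{C}(\Xi^{\rm up}(b))=\sum c_{b_1, b_2}\Xi^{\rm up}(b_1)\otimes\Xi^{\rm up}(b_2)$ with $c_{b_1, b_2}\in\mathbb{Z}_{\geq 0}$. Since $\Xi^{\rm up}(b_1)(f_i)$ is nonzero only for $b_1=\tilde{f}_i b_\infty$ (by homogeneity, the relevant graded component being one-dimensional) and is a positive real number in that case (by the KLR normalization $\Xi^{\rm up}(\tilde{f}_i b_\infty)=[L(i)]$), substituting into the displayed formula exhibits $(-f_i)\cdot\Xi^{\rm up}(b)$ as a nonnegative real linear combination of the $\Xi^{\rm up}(b_2)$, proving (i).

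The main obstacle is verifying that the algebraic coproduct $\Delta_\mathbb{C}$ on $\mathbb{C}[U^-]$ coincides, after specialization, with the coproduct on $\mathcal{G}_q$ induced by the KLR restriction functors: equivalently, that the isomorphism $\mathcal{G}_q|_{q=1}\simeq\mathbb{C}[U^-]$ is one of bialgebras and not merely of algebras. This Hopf-algebra compatibility is part of \cite[\S3]{KL1} (the restriction functor is precisely the categorification of the coproduct dual to multiplication in $U_{q,\mathbb{Z}}(\mathfrak{u}^-)$), but matching the conventions --- the $q$-grading shifts in the restriction functor, the bilinear form identifying $U_{q,\mathbb{Z}}(\mathfrak{u}^-)$ with $\mathcal{G}_q$, and the sign in $\langle x\cdot\rho, y\rangle=-\langle\rho, xy\rangle$ --- requires some bookkeeping. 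Once that is in place, both (i) and (ii) reduce to the formal observation that exact functors send simple modules to genuine modules, whose Grothendieck classes decompose nonnegatively in the basis of self-dual simples.
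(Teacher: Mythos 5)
Your proof is correct and the core mechanism is the same as the paper's: reduce positivity to the exactness of the KLR induction/restriction functors, so that the relevant Grothendieck class decomposes with nonnegative graded composition multiplicities in the basis of self-dual simples, and specializing $q=1$ (forgetting grade shifts) preserves nonnegativity. For (ii) your argument matches the paper exactly. For (i) you take a mildly longer route, expanding $(-f_i)\cdot\rho = \sum\rho_{(1)}(f_i)\,\rho_{(2)}$ through the full Hopf coproduct $\Delta_{\mathbb{C}}$ and then contracting against $f_i$ in the first slot, whereas the paper directly asserts that $(-f_i)\cdot$ is the $q=1$ specialization of the map induced by a single-output restriction functor $\Res\colon R_{\bf d}\text{-gmod}\to R_{{\bf d}-{\bf e}_i}\text{-gmod}$, citing \cite[\S\S 2.6, \S\S 3.1]{KL1} and \cite[\S\S 5.1]{KOP2} for that identification. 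The two are equivalent: your version makes explicit the one-dimensionality of the ${\bf e}_i$-graded component and the positivity of $\Xi^{\rm up}(\tilde f_i b_\infty)(f_i)$, which holds since $\Xi^{\rm low}(\tilde f_i b_\infty)$ is a positive multiple of $f_i$ under the KLR normalization. The coproduct-restriction compatibility you flag as the main obstacle is exactly what those citations supply, so nothing is missing --- you have simply unpacked a step the paper delegates to \cite{KL1}.
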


\begin{proof}
As mentioned in Example \ref{e:perfect_KLR}, the KLR-basis $\{[S(b)] \mid b\in \mathcal{B}(\infty)\}$ comes from the set $\{S(b) \mid b\in \mathcal{B}(\infty)\}$ consisting of self-dual graded simple modules. 
The map $(-f_i) \cdot \colon U(\mathfrak{u}^-)^\ast _{{\rm gr}, {\bf d}}\to U(\mathfrak{u}^-)^\ast _{{\rm gr}, {\bf d}-{\bf e}_i}$ (resp., $[S(\tilde{f}_i b_\infty)]_{q=1}\cdot \colon U(\mathfrak{u}^-)^\ast _{{\rm gr}, {\bf d}}\to U(\mathfrak{u}^-)^\ast _{{\rm gr}, {\bf d}+{\bf e}_i}$) is the specialization at $q=1$ of the map induced from a certain restriction functor $\Res\colon R_{\bf d}\text{-}{\rm gmod}\to R_{{\bf d}-{\bf e}_i}\text{-}{\rm gmod}$ (resp., a certain induction functor $\Ind\colon R_{\bf d}\text{-}{\rm gmod}\to R_{{\bf d}+{\bf e}_i}\text{-}{\rm gmod}$) \cite[\S\S 2.6 and \S\S 3.1]{KL1} (see also \cite[\S\S 5.1]{KOP2}). Then, in the Grothendieck group $G_0(R_{{\bf d}\mp {\bf e}_i}\text{-}{\rm gmod})$, we have 
\begin{align*}
[\Res (S(b))]&= \sum_{b'\in \mathcal{B}(\infty), m\in\mathbb{Z}}c_{i, b}^{(m), b'}[S(b')[m]],\;\text{and}\\
[\Ind (S(b))]&= \sum_{b'\in \mathcal{B}(\infty), m\in\mathbb{Z}}d_{i, b}^{(m), b'}[S(b')[m]]
\end{align*}
for $b\in\mathcal{B}(\infty)$ and $i \in I$. Here $S(b')[m]$ denotes the grade shift of $S(b')$ by $m$, and $c_{i, b}^{(m), b'}$, $d_{i, b}^{(m), b'}$ are  the multiplicities of the corresponding simple modules in the composition series of $\Res(S(b))$ and $\Ind(S(b))$, respectively. In particular, $c_{i, b}^{(m), b'}$ and $d_{i, b}^{(m), b'}$ are nonnegative integers. Since the specialization at $q=1$ corresponds to the neglect of grade shifts, we have 
\begin{align*}
(-f_i)\cdot[S(b)]_{q=1}&=\sum\nolimits_{b'\in \mathcal{B}(\infty)}(\sum\nolimits_{m\in\mathbb{Z}}c_{i, b}^{(m), b'})[S(b')]_{q=1},\;\text{and}\\
[S(\tilde{f}_i b_\infty)]_{q=1}\cdot [S(b)]_{q=1}&=\sum\nolimits_{b'\in \mathcal{B}(\infty)}(\sum\nolimits_{m\in\mathbb{Z}}d_{i, b}^{(m), b'})[S(b')]_{q=1}
\end{align*}
in $U(\mathfrak{u}^-)^\ast _{\rm gr}\ (= \mathbb{C}[U^-])$. Hence the structure constants $\sum_{m\in\mathbb{Z}}c_{i, b}^{(m), b'}$ and $\sum_{m\in\mathbb{Z}}d_{i, b}^{(m), b'}$ are nonnegative.
\end{proof}

In the following, we will show the property (D) in Section 3 for a perfect basis ${\bf B}^{\rm up}$ with the positivity property (i). By the definition of the $U(\mathfrak{u}^-)$-bimodule structure on $U(\mathfrak{u}^-)^\ast _{\rm gr}$, we have 
\begin{align*}
(-1)^k \langle f_i ^k \cdot \Xi^{\rm up} (b), \Xi^{\rm low} (b^\prime)\rangle &= \langle \Xi^{\rm up} (b), f_i ^k \cdot \Xi^{\rm low} (b^\prime)\rangle,\ {\rm and}\\
(-1)^k \langle \Xi^{\rm up} (b) \cdot f_i ^k, \Xi^{\rm low} (b^\prime)\rangle &= \langle \Xi^{\rm up} (b), \Xi^{\rm low} (b^\prime) \cdot f_i ^k\rangle
\end{align*}
for all $b, b^\prime \in \mathcal{B}(\infty)$; hence we obtain the following. 

\vspace{2mm}\begin{lem}\label{positivity}
Let ${\bf B}^{\rm up} = \{\Xi^{\rm up}(b) \mid b \in \mathcal{B}(\infty)\} \subset \mathbb{C}[U^-]$ be a perfect basis with the positivity property $(i)$, and ${\bf B}^{\rm low} = \{\Xi^{\rm low} (b) \mid b \in \mathcal{B}(\infty)\} \subset U(\mathfrak{u}^-)$ its dual basis. Then the elements $f_i ^k \cdot \Xi^{\rm low} (b)$ and $\Xi^{\rm low} (b) \cdot f_i ^k$ belong to $\sum_{b^\prime \in \mathcal{B}(\infty)} \mathbb{R}_{\ge 0} \Xi^{\rm low} (b^\prime)$ for all $b \in \mathcal{B}(\infty)$, $i \in I$, and $k \ge 0$.
\end{lem}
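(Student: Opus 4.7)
The plan is to extract the positivity of the structure constants of ${\bf B}^{\rm low}$ under the $U(\mathfrak{u}^-)$-bimodule action from the positivity of the corresponding structure constants of ${\bf B}^{\rm up}$ via the pairing. Fix $i \in I$, $k \ge 0$, and $b \in \mathcal{B}(\infty)$, and expand
\[
f_i^k \cdot \Xi^{\rm low}(b) = \sum_{b' \in \mathcal{B}(\infty)} c_{b,b'} \Xi^{\rm low}(b'), \qquad \Xi^{\rm low}(b) \cdot f_i^k = \sum_{b' \in \mathcal{B}(\infty)} c'_{b,b'} \Xi^{\rm low}(b'),
\]
with $c_{b,b'}, c'_{b,b'} \in \mathbb{C}$; the goal is to show these coefficients lie in $\mathbb{R}_{\ge 0}$.

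Since ${\bf B}^{\rm up}$ and ${\bf B}^{\rm low}$ are dual with respect to the pairing $\langle\,\cdot\,,\,\cdot\,\rangle\colon U(\mathfrak{u}^-)^\ast_{\rm gr} \times U(\mathfrak{u}^-) \to \mathbb{C}$, one has $c_{b,b'} = \langle \Xi^{\rm up}(b'), f_i^k \cdot \Xi^{\rm low}(b) \rangle$. Using the displayed relations recalled immediately before the statement,
\[
\langle \Xi^{\rm up}(b'), f_i^k \cdot \Xi^{\rm low}(b) \rangle = \langle (-1)^k f_i^k \cdot \Xi^{\rm up}(b'), \Xi^{\rm low}(b) \rangle.
\]
Now invoke the equivalent positivity property (i)$'$ (established in the preceding proposition) for ${\bf B}^{\rm up}$: we can write
\[
(-1)^k f_i^k \cdot \Xi^{\rm up}(b') = \sum_{b'' \in \mathcal{B}(\infty)} d_{b',b''} \Xi^{\rm up}(b''), \qquad d_{b',b''} \in \mathbb{R}_{\ge 0}.
\]
Pairing with $\Xi^{\rm low}(b)$ and using duality again, only the $b'' = b$ term survives, so $c_{b,b'} = d_{b',b} \in \mathbb{R}_{\ge 0}$. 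The argument for $c'_{b,b'}$ is verbatim the same, starting from $\langle \Xi^{\rm up}(b') \cdot f_i^k, \Xi^{\rm low}(b) \rangle$ transported to $\langle \Xi^{\rm up}(b'), \Xi^{\rm low}(b) \cdot f_i^k\rangle$ by the second displayed relation, and applying the right-multiplication half of property (i)$'$.

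There is essentially no obstacle: the entire content of the lemma is the self-dual translation of positivity from ${\bf B}^{\rm up}$ to ${\bf B}^{\rm low}$ through the nondegenerate pairing, and the sign $(-1)^k$ appearing in the definition of the bimodule action on $U(\mathfrak{u}^-)^\ast_{\rm gr}$ is precisely absorbed into the formulation of (i)$'$ so that no sign remains on the ${\bf B}^{\rm low}$ side. The only thing to check carefully is that the expansions above are well-defined (i.e., involve only finitely many nonzero coefficients in each weight component), but this is immediate because $f_i^k \cdot \Xi^{\rm low}(b)$ lies in the finite-dimensional weight space $U(\mathfrak{u}^-)_{{\bf d}+k{\bf e}_i}$, where ${\bf d}$ is determined by $\wt(b)$.
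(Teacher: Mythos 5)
Your proof is correct and follows exactly the argument the paper has in mind: the paper states the two dual-pairing identities immediately before the lemma and then says ``hence we obtain the following,'' and what you have written is just that transfer of positivity through the nondegenerate pairing, with the sign $(-1)^k$ cancelled by the sign built into property (i)$'$. You have merely spelled out the coefficient extraction that the paper leaves implicit.
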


\vspace{2mm}\begin{prop}\label{p:pos_D}
A perfect basis ${\bf B}^{\rm up}$ with the positivity property (i) satisfies the property (D) in Section 3.
\end{prop}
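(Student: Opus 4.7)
The plan is to argue by induction on $r = \ell(w)$, proving $V_w(\lambda) = M_w \coloneqq \operatorname{span}\{\Xi_\lambda^{\rm low}(b) : b \in \mathcal{B}_w(\lambda)\}$. The base case $r = 0$ is immediate since $V_e(\lambda) = \mathbb{C} v_\lambda = \mathbb{C} \Xi_\lambda^{\rm low}(b_\lambda)$ and $\mathcal{B}_e(\lambda) = \{b_\lambda\}$. In the inductive step I write $w = s_{i_1} w'$ for a reduced word $(i_1,\ldots,i_r)$ of $w$ and apply the inductive hypothesis $V_{w'}(\lambda) = M_{w'}$. Since $V_{w'}(\lambda)$ is $\mathfrak{b}$-stable and $\mathfrak{b}_{i_1} = \mathfrak{b} + \mathbb{C} f_{i_1}$, the standard Demazure recursion gives $V_w(\lambda) = U(\mathfrak{b}_{i_1}) V_{w'}(\lambda) = \sum_{k \ge 0} f_{i_1}^k V_{w'}(\lambda)$. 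By the Demazure character formula $\dim V_w(\lambda) = \#\mathcal{B}_w(\lambda) = \dim M_w$, so it suffices to establish the single inclusion $M_w \subseteq V_w(\lambda)$.

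By Proposition \ref{Demazure crystal} and the $\tilde{e}_i$-stability of $\mathcal{B}_w(\lambda)$, this Demazure crystal is the disjoint union of complete $\tilde{f}_{i_1}$-strings, each emanating from some $b^{\rm top} \in \mathcal{B}_{w'}(\lambda)$ with $\varepsilon_{i_1}(b^{\rm top}) = 0$, so every $b \in \mathcal{B}_w(\lambda)$ has the form $\tilde{f}_{i_1}^j b^{\rm top}$ for $0 \le j \le \varphi_{i_1}(b^{\rm top})$. I will prove $\Xi_\lambda^{\rm low}(\tilde{f}_{i_1}^j b^{\rm top}) \in V_w(\lambda)$ by decreasing induction on $\ell \coloneqq \varphi_{i_1}(b^{\rm top})$. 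First I transfer Lemma \ref{positivity} through $\pi_\lambda$: applying $\pi_\lambda$ to the positivity expansion and using $\pi_\lambda(\Xi^{\rm low}(b)) = \Xi_\lambda^{\rm low}(\pi_\lambda(b))$ (with the convention $\Xi_\lambda^{\rm low}(0) = 0$) yields $f_i^k \cdot \Xi_\lambda^{\rm low}(\bar b) \in \sum_{\bar b' \in \mathcal{B}(\lambda)} \mathbb{R}_{\ge 0} \Xi_\lambda^{\rm low}(\bar b')$. Combining this with Proposition \ref{p:compatibility}(2) gives, for any such $b^{\rm top}$ and $0 \le j \le \ell$,
\[
f_{i_1}^j \cdot \Xi_\lambda^{\rm low}(b^{\rm top}) = \alpha_0 \, \Xi_\lambda^{\rm low}(\tilde{f}_{i_1}^j b^{\rm top}) + \sum_{b'' \in W_{i_1}^{\ell+1}(\mathcal{B}(\lambda))} \alpha_{b''} \, \Xi_\lambda^{\rm low}(b''),
\]
with $\alpha_0 > 0$ when $\tilde{f}_{i_1}^j b^{\rm top} \neq 0$ and all $\alpha_{b''} \ge 0$. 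The left-hand side belongs to $V_w(\lambda)$ because $\Xi_\lambda^{\rm low}(b^{\rm top}) \in V_{w'}(\lambda) \subseteq V_w(\lambda)$ and $V_w(\lambda)$ is $f_{i_1}$-stable.

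In the base case $\ell = \ell_0$, where $\ell_0$ denotes the maximum of $\varepsilon_{i_1} + \varphi_{i_1}$ on $\mathcal{B}(\lambda)$, the set $W_{i_1}^{\ell_0+1}(\mathcal{B}(\lambda))$ is empty, so $\Xi_\lambda^{\rm low}(\tilde{f}_{i_1}^j b^{\rm top}) = \alpha_0^{-1} f_{i_1}^j \Xi_\lambda^{\rm low}(b^{\rm top}) \in V_w(\lambda)$. In the inductive step one must solve for $\Xi_\lambda^{\rm low}(\tilde{f}_{i_1}^j b^{\rm top})$ modulo the correction terms, which by the decreasing induction reduces to checking that every $b''$ appearing with $\alpha_{b''} > 0$ lies in $\mathcal{B}_w(\lambda)$.

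The hard part will be verifying this last support condition. The plan is to exploit the $\tilde{e}_i$-stability of $\mathcal{B}_w(\lambda)$ together with the $\mathfrak{sl}_2^{(i_1)}$-isotypic decomposition of $V(\lambda)$ furnished by Proposition \ref{p:compatibility}(2): $V_w(\lambda)$ is a $\mathfrak{g}_{i_1}$-submodule of $V(\lambda)$, the filtration $W_{i_1}^\ell(V(\lambda))$ is compatible with the perfect basis, and positivity constrains the corrections produced by $f_{i_1}$ to couple only to those irreducible $\mathfrak{sl}_2^{(i_1)}$-components whose highest weight vectors arise, via iterated $\tilde{e}_i$-transitions, from $\mathcal{B}_{w'}(\lambda)$; such components correspond precisely to $\tilde{f}_{i_1}$-strings of $\mathcal{B}_w(\lambda)$. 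A weight-by-weight dimension count, using $\dim V_w(\lambda)_\mu = \#\mathcal{B}_w(\lambda)_\mu$ and $\mathcal{B}_w(\lambda)_\mu \subseteq \operatorname{supp}(V_w(\lambda)_\mu)$, then forces equality of the two sets and yields $V_w(\lambda) = M_w$.
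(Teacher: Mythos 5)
Your overall architecture (outer induction on $\ell(w)$ via the Demazure recursion $V_w(\lambda)=\sum_k f_{i_1}^k V_{w'}(\lambda)$, inner decreasing induction on the $\mathfrak{sl}_2^{(i_1)}$ filtration level $\ell$, the spanning formula from Proposition \ref{p:compatibility}\,(2), and the dimension reduction to the single inclusion $M_w\subseteq V_w(\lambda)$) closely parallels the paper's proof, which also reduces to an $\mathfrak{sl}_2$-step and then uses descending induction on $\ell$ via the filtration $W_i^\ell$. You also correctly locate the sticking point: after the base case $\ell=\ell_0$, extracting $\Xi_\lambda^{\rm low}(\tilde f_{i_1}^j b^{\rm top})$ from $f_{i_1}^j\Xi_\lambda^{\rm low}(b^{\rm top})$ requires controlling the corrections lying in $W_{i_1}^{\ell+1}$.

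However, your plan for closing that gap does not work as stated. First, there is no reason why the corrections $b''$ with $\alpha_{b''}>0$ should lie in $\mathcal{B}_w(\lambda)$; Proposition \ref{p:compatibility}\,(2) only places them in $W_{i_1}^{\ell+1}(\mathcal{B}(\lambda))$, and nonnegativity of $\alpha_{b''}$ says nothing about their crystal-theoretic support. Second, the concluding ``weight-by-weight dimension count'' is circular: from $\dim V_w(\lambda)_\mu=\#\mathcal{B}_w(\lambda)_\mu$ together with $\mathcal{B}_w(\lambda)_\mu\subseteq\operatorname{supp}(V_w(\lambda)_\mu)$ you can only conclude $\#\operatorname{supp}(V_w(\lambda)_\mu)\ge\dim V_w(\lambda)_\mu$, which is automatic; the support of a $d$-dimensional subspace with respect to a fixed basis can strictly exceed $d$, and ruling that out is exactly the statement $V_w(\lambda)_\mu\subseteq M_{w,\mu}$ you are trying to prove. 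Third, the role of the positivity hypothesis in your writeup is not yet essential: you only invoke it to assert $\alpha_0>0$, but $\alpha_0\neq 0$ already follows from Proposition \ref{p:compatibility}\,(2) without any positivity.

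The paper sidesteps the support question entirely by not trying to extract individual basis vectors from the corrections. Instead it proves the cleaner statement that $U(\mathfrak{g}_i)N$ equals $\sum_{b\in\tilde{\mathcal{B}}_N^{(i)}}\mathbb{C}\Xi_\lambda^{\rm low}(b)$ by passing to the quotient $U(\mathfrak{g}_i)N/W_i^{\ell+1}(U(\mathfrak{g}_i)N)$ at each step, using the identity $W_i^{\ell+1}(U(\mathfrak{g}_i)N)=U(\mathfrak{g}_i)W_i^{\ell+1}(N)$ (Kashiwara, cited there as \cite[Lemma 3.1.4]{Kas4}). In the quotient, all the troublesome corrections vanish, so the level-$\ell$ part is spanned on the nose by the images of $\Xi_\lambda^{\rm low}(\tilde f_i^k b)$. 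The positivity enters at a different and more substantive point there: to guarantee that for a nonnegative combination $v=\sum_b c_b\Xi_\lambda^{\rm low}(b)$, one has $f_i^k\cdot v\neq 0$ whenever $k\le\max\{\varphi_i(b)\mid c_b\neq 0\}$, i.e.\ that no cancellation can kill the $f_i$-string earlier than the crystal predicts. If you want to repair your approach, you should either import this quotient trick and Kashiwara's identity, or find an independent argument for the nonvanishing statement above and explain how it replaces your support condition.
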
\vspace{2mm}

\begin{rem}\normalfont
For the property (D), we do not need the positivity property (ii). 
\end{rem}\vspace{2mm}

\begin{proof}[Proof of Proposition \ref{p:pos_D}]
Our proof is similar to the one in \cite[\S\S 3.1 and \S\S 3.2]{Kas4}. We have $V_{w}(\lambda)=U(\mathfrak{g}_i)V_{s_{i}w}(\lambda)$ if $\ell(s_{i}w)<\ell (w)$ (see \cite[Lemma 3.2.1]{Kas4}). Hence it suffices to prove the following claim:

\begin{clm} If a $U(\mathfrak{u}_i^+)$-submodule $N$ $(\mathfrak{u}_i^+ \coloneqq \mathbb{C}e_i)$ of $V(\lambda)$ is spanned by $\{\Xi_{\lambda}^{\rm low}(b) \mid b\in \mathcal{B}_N\}$ for some subset $\mathcal{B}_N\subset \mathcal{B}(\lambda)$, then $U(\mathfrak{g}_i)N=\sum_{b\in \tilde{\mathcal{B}}_N^{(i)}}\mathbb{C}\Xi_{\lambda}^{\rm low}(b)$, where $\tilde{\mathcal{B}}_N^{(i)} \coloneqq \bigcup_{k\in \mathbb{Z}_{\ge 0}}\tilde{f}_i ^k\mathcal{B}_N\setminus \{0\}$.
\end{clm}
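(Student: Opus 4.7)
The plan is to prove $U(\mathfrak{g}_i)N = M$, where $M \coloneqq \sum_{b \in \tilde{\mathcal{B}}_N^{(i)}} \mathbb{C}\,\Xi_\lambda^{\rm low}(b)$.

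As preparation, I would record two elementary observations. Since $N$ is spanned by $h_i$-weight vectors and is $U(\mathfrak{u}_i^+)$-stable, the triangular decomposition of $U(\mathfrak{g}_i)$ gives $U(\mathfrak{g}_i)N = N + \sum_{k\ge 1} f_i^k N$. Moreover, applying Proposition \ref{p:compatibility} (2) to $e_i\cdot \Xi_\lambda^{\rm low}(b)\in N$ for $b\in \mathcal{B}_N$ and comparing coefficients modulo $W_i^{\ell+1}(V(\lambda))$ (where $\ell=\varepsilon_i(b)+\varphi_i(b)$) shows that $\mathcal{B}_N$ is $\tilde{e}_i$-stable: if $b\in \mathcal{B}_N$ and $\tilde{e}_i b \neq 0$, then $\tilde{e}_i b \in \mathcal{B}_N$. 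Consequently, $\tilde{\mathcal{B}}_N^{(i)}$ decomposes as a disjoint union of complete $\mathfrak{sl}_2$-strings for the $\mathfrak{g}_i$-action in $\mathcal{B}(\lambda)$.

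The main argument proceeds by descending induction on $\ell$, simultaneously proving
\[
(a)_\ell:\; \Xi_\lambda^{\rm low}(b)\in U(\mathfrak{g}_i)N\ \text{for every}\ b\in \tilde{\mathcal{B}}_N^{(i)}\ \text{with}\ \varepsilon_i(b)+\varphi_i(b)\ge \ell,
\]
\[
(b)_\ell:\; U(\mathfrak{g}_i)N\cap W_i^\ell(V(\lambda))=M\cap W_i^\ell(V(\lambda)).
\]
Letting $\ell_0$ denote the largest integer with $W_i^{\ell_0}(V(\lambda))\neq 0$, the base case $\ell=\ell_0+1$ is immediate. For the inductive step, take $b\in \tilde{\mathcal{B}}_N^{(i)}$ with $\varepsilon_i(b)+\varphi_i(b)=\ell$ and write $b=\tilde{f}_i^k b_0$ for some $b_0\in \mathcal{B}_N\cap I_i^\ell(\mathcal{B}(\lambda))$; Proposition \ref{p:compatibility} (2) together with Lemma \ref{positivity} yields
\[
f_i^k\cdot \Xi_\lambda^{\rm low}(b_0)=c\,\Xi_\lambda^{\rm low}(b)+\sum_{b''\in W_i^{\ell+1}(\mathcal{B}(\lambda))}e_{b''}\,\Xi_\lambda^{\rm low}(b''),
\]
with $c\in\mathbb{C}^{\times}$ and each $e_{b''}\in\mathbb{R}_{\ge 0}$. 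Since the left-hand side lies in $U(\mathfrak{g}_i)N$ and the error sum lies in $W_i^{\ell+1}(V(\lambda))$, the inductive hypothesis $(b)_{\ell+1}$ reduces the verification of $(a)_\ell$ to showing that the error sum lies in $M$, i.e., that $e_{b''}>0$ implies $b''\in\tilde{\mathcal{B}}_N^{(i)}$. Once $(a)_\ell$ is established, property $(b)_\ell$ follows from $(a)_\ell$, $(b)_{\ell+1}$, positivity (Lemma \ref{positivity}), and the explicit description of $M\cap W_i^\ell(V(\lambda))$ as the span of $\Xi_\lambda^{\rm low}(b)$ with $b\in \tilde{\mathcal{B}}_N^{(i)}\cap W_i^\ell(\mathcal{B}(\lambda))$.

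The principal obstacle is thus the implication ``$e_{b''}>0\Rightarrow b''\in\tilde{\mathcal{B}}_N^{(i)}$''. To handle it, I would project the displayed identity onto each graded piece $W_i^{\ell'}(V(\lambda))/W_i^{\ell'+1}(V(\lambda))$ with $\ell'=\varepsilon_i(b'')+\varphi_i(b'')$, where Proposition \ref{p:compatibility} (2) identifies the $\mathfrak{g}_i$-action on the basis $\{\overline{\Xi_\lambda^{\rm low}(b)}\mid b\in I_i^{\ell'}(\mathcal{B}(\lambda))\}$ with the disjoint union of the classical $\mathfrak{sl}_2$-string actions. In that graded piece, the image of $N\cap W_i^{\ell'}(V(\lambda))$ is spanned by $\overline{\Xi_\lambda^{\rm low}(b)}$ with $b\in \mathcal{B}_N\cap I_i^{\ell'}(\mathcal{B}(\lambda))$; by elementary $\mathfrak{sl}_2$-representation theory and the $\tilde{e}_i$-stability of $\mathcal{B}_N$, its $\mathfrak{g}_i$-closure is spanned by $\overline{\Xi_\lambda^{\rm low}(b)}$ with $b\in \tilde{\mathcal{B}}_N^{(i)}\cap I_i^{\ell'}(\mathcal{B}(\lambda))$. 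Since the image of $f_i^k\cdot \Xi_\lambda^{\rm low}(b_0)$ in this graded piece lies in that $\mathfrak{g}_i$-closure, and positivity (Lemma \ref{positivity}) prevents any cancellation among the projected contributions, every $b''$ with $e_{b''}>0$ must satisfy $b''\in\tilde{\mathcal{B}}_N^{(i)}$, as required. Finally, $(b)_0$ yields the desired equality $U(\mathfrak{g}_i)N=M$.
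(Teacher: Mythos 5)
Your approach mirrors the paper's in outline (descending induction along the filtration $W_i^{\ell}$, with positivity controlling the $f_i$-action), and you correctly isolate the crux: showing that every $b''$ in the expansion of the error $E$ lies in $\tilde{\mathcal{B}}_N^{(i)}$. But the ``project onto graded pieces'' argument you propose for this step does not hold up. For $\ell'>\ell$ the element $f_i^k\cdot\Xi_\lambda^{\rm low}(b_0)$ does \emph{not} lie in $W_i^{\ell'}(V(\lambda))$, so its image under the quotient map $W_i^{\ell'}(V(\lambda))\to W_i^{\ell'}(V(\lambda))/W_i^{\ell'+1}(V(\lambda))$ is not defined. If instead you mean the basis-compatible projection onto $\bigoplus_{b\in I_i^{\ell'}(\mathcal{B}(\lambda))}\mathbb{C}\,\Xi_\lambda^{\rm low}(b)$, that map is not $\mathfrak{g}_i$-equivariant, so the fact that $f_i^k\cdot\Xi_\lambda^{\rm low}(b_0)\in U(\mathfrak{g}_i)N$ does not transfer to a statement about its projection lying in the $\mathfrak{g}_i$-closure of the image of $N\cap W_i^{\ell'}(V(\lambda))$. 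And if you use the $\mathfrak{g}_i$-equivariant isotypic projection onto $I_i^{\ell'}(V(\lambda))$, the basis description of $N$ is destroyed (the projections of $\Xi_\lambda^{\rm low}(b)$ for $b$ at levels $<\ell'$ are not basis vectors), so the $\mathfrak{sl}_2$-string picture you invoke is unavailable for the projected image of $N$. There is no interpretation under which the sentence ``the image of $f_i^k\cdot\Xi_\lambda^{\rm low}(b_0)$ in this graded piece lies in that $\mathfrak{g}_i$-closure'' is both meaningful and a consequence of what has been established.

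The paper sidesteps this entirely by taking $\ell_0$ to be the top level of $U(\mathfrak{g}_i)N$ rather than of $V(\lambda)$; then $W_i^{\ell_0+1}(U(\mathfrak{g}_i)N)=0$ gives $f_i^{\ell_0+1}\cdot\Xi_\lambda^{\rm low}(b)=0$ for top-level $b\in\mathcal{B}_N$, and the positivity nonvanishing criterion (``$f_i^k\cdot v\neq 0$ for $k\le\max\varphi_i$'' when the coefficients $c_b$ are nonnegative) then forces $E=0$ outright, after which the general level is recovered by quotienting out $W_i^{\ell+1}(U(\mathfrak{g}_i)N)=U(\mathfrak{g}_i)W_i^{\ell+1}(N)$. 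Your two-part induction can in fact be repaired in the same spirit without quotienting: apply $f_i^{\ell+1-k}$ to your displayed identity to get $f_i^{\ell+1}\cdot\Xi_\lambda^{\rm low}(b_0)\in U(\mathfrak{g}_i)N\cap W_i^{\ell+1}(V(\lambda))$, which equals $M\cap W_i^{\ell+1}(V(\lambda))$ by $(b)_{\ell+1}$; by \eqref{f_action} every error term $b''$ satisfies $\varphi_i(b'')\ge\ell+1-k$, so $\tilde{f}_i^{\ell+1-k}b''\neq 0$; positivity then guarantees that $\Xi_\lambda^{\rm low}(\tilde{f}_i^{\ell+1-k}b'')$ occurs with strictly positive coefficient in $f_i^{\ell+1}\cdot\Xi_\lambda^{\rm low}(b_0)$, whence $\tilde{f}_i^{\ell+1-k}b''\in\tilde{\mathcal{B}}_N^{(i)}$, and hence $b''\in\tilde{\mathcal{B}}_N^{(i)}$ because $\tilde{\mathcal{B}}_N^{(i)}$ is a union of complete $\tilde{f}_i$-strings. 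The graded-piece projection as you wrote it, however, does not deliver this conclusion.
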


\noindent For a $\mathbb{C}$-subspace $M\subset V(\lambda)$ (resp., a subset $\mathcal{S}\subset\mathcal{B}(\lambda)$), write $W_i^{\ell}(M) \coloneqq W_i^{\ell}(V(\lambda))\cap M$ and $I_i^{\ell}(M) \coloneqq I_i^{\ell}(V(\lambda))\cap M$ (resp., $W_i^{\ell}(\mathcal{S}) \coloneqq W_i^{\ell}(\mathcal{B}(\lambda))\cap \mathcal{S}$) (see Proposition \ref{p:compatibility}). By Proposition \ref{p:compatibility} {\rm (2)}, $W_i^{\ell}(N)=\sum_{b\in W_i^{\ell}(\mathcal{B}_N)}\mathbb{C}\Xi_{\lambda}^{\rm low}(b)$, and 
\[
\mathcal{B}_N=\mathcal{B}_N\cap\left(\bigcup\nolimits_{k\in \mathbb{Z}_{\ge 0}}\tilde{f}_i ^k\{b\in \mathcal{B}_N\mid \varepsilon_i(b)=0\}\right).
\]
Let $\ell_0$ be the maximum integer $\ell$ such that $W_i^{\ell}(U(\mathfrak{g}_i)N)\neq 0$. Then 
\begin{align*}
W_i^{\ell_0}(U(\mathfrak{g}_i)N)=I_i^{\ell_0}(U(\mathfrak{g}_i)N),\;\;W_i^{\ell_0}(U(\mathfrak{g}_i)N)\cap \Ker e_i=W_i^{\ell_0}(N)\cap \Ker e_i.
\end{align*}
Therefore 
\[
W_i^{\ell_0}(U(\mathfrak{g}_i)N)\cap \Ker e_i=\sum\nolimits_{b\in \mathcal{B}_N;\ \langle\wt (b), h_i\rangle=\ell_0}\mathbb{C}\Xi_{\lambda}^{\rm low}(b).
\]
Moreover $\varepsilon_i(b)=0$ for all $b\in \mathcal{B}_N$ with $\langle\wt (b), h_i\rangle=\ell_0$. By the way, for $v=\sum_{b\in \mathcal{B}(\lambda)}c_b\Xi_{\lambda}^{\rm low}(b)\in V(\lambda)$ with $c_b\in \mathbb{R}_{\ge 0}$, 
\[
f_i^k \cdot v \neq 0\;\text{for}\;k\le \max\{\varphi_i(b)\mid c_b\neq 0 \}
\]
by \eqref{f_action} and Lemma \ref{positivity}. Hence we can deduce that $f_i^k \cdot \Xi_{\lambda}^{\rm low}(b)\in \mathbb{C}^{\times}\Xi_{\lambda}^{\rm low}(\tilde{f}_i^k b)$ for all $b\in \mathcal{B}_N$ with $\langle\wt(b), h_i\rangle=\ell_0$ by \eqref{f_action}. Therefore $W_i^{\ell_0}(U(\mathfrak{g}_i)N)=\sum_{b\in W_i^{\ell_0}(\tilde{\mathcal{B}}_N^{(i)})}\mathbb{C}\Xi_{\lambda}^{\rm low}(b)$. By using descending induction on $\ell$ and replacing $U(\mathfrak{g}_i)N$ by $U(\mathfrak{g}_i)N/W_i^{\ell+1}(U(\mathfrak{g}_i)N)=U(\mathfrak{g}_i)N/U(\mathfrak{g}_i)W_i^{\ell+1}(N)$ (see \cite[Lemma 3.1.4]{Kas4}) in the argument above, we obtain the assertion.
\end{proof}

For $w \in W$, take $i \in I$ (resp., $i^\prime \in I$) such that $\ell(s_i w) < \ell(w)$ (resp., $\ell(w s_{i^\prime}) < \ell(w)$). Then the left action of ${\mathfrak u}_i ^-$ (resp., the right action of ${\mathfrak u}_{i^\prime} ^-$) on $\mathbb{C}[U^-]$ induces a left action of ${\mathfrak u}_i ^-$ (resp., a right action of ${\mathfrak u}_{i^\prime} ^-$) on $\mathbb{C}[U^- \cap X(w)]$ by the restriction map $\eta_w: \mathbb{C}[U^-] \twoheadrightarrow \mathbb{C}[U^- \cap X(w)]$. The following is an immediate consequence of Corollary \ref{vanishing} (4) (see also Proposition \ref{p:pos_D}) and the positivity properties (i)$'$, (ii)$'$.

\vspace{2mm}\begin{cor}\label{positivity corollary}
For $w \in W$, the following hold.
\begin{enumerate}
\item[{\rm (1)}] The elements $(-1)^k f_i ^k \cdot \Xi_w ^{\rm up} (b)$ and $(-1)^k \Xi_w ^{\rm up} (b) \cdot f_{i^ \prime} ^k$ belong to $\sum_{b^\prime \in \mathcal{B}_w(\infty)} \mathbb{R}_{\ge 0} \Xi_w ^{\rm up} (b^\prime)$ for all $b \in \mathcal{B}_w(\infty)$, $i, i^{\prime} \in I$, and $k \ge 0$ such that $\ell(s_i w) < \ell(w)$, $\ell(w s_{i^\prime}) < \ell(w)$.
\item[{\rm (2)}] The product $\Xi_w ^{\rm up}(\tilde{f}_i ^k b_\infty) \cdot \Xi_w ^{\rm up}(b)$ belongs to $\sum_{b' \in \mathcal{B}_w(\infty)} \mathbb{R}_{\ge 0} \Xi_w ^{\rm up}(b')$ for all $b \in \mathcal{B}_w(\infty)$, $i \in I$, and $k \ge 0$.
\end{enumerate}
\end{cor}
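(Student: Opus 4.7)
My plan is to deduce both parts by transporting the positivity identities in $\mathbb{C}[U^-]$ through the restriction map $\eta_w\colon \mathbb{C}[U^-] \twoheadrightarrow \mathbb{C}[U^- \cap X(w)]$ and then discarding the basis vectors that vanish in the quotient.

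The preparatory point is that, by Proposition \ref{p:pos_D}, the perfect basis ${\bf B}^{\rm up}$ satisfies the property (D); hence Corollary \ref{vanishing} is available for it. In particular, part (4) of that corollary asserts $\Xi_w^{\rm up}(b') = \eta_w(\Xi^{\rm up}(b')) = 0$ whenever $b' \in \mathcal{B}(\infty) \setminus \mathcal{B}_w(\infty)$, while part (3) says that $\{\Xi_w^{\rm up}(b') \mid b' \in \mathcal{B}_w(\infty)\}$ is a $\mathbb{C}$-basis of $\mathbb{C}[U^- \cap X(w)]$, so nonnegative contributions cannot collide after applying $\eta_w$. As recalled in the paragraph preceding the corollary, $\eta_w$ is a $\mathbb{C}$-algebra homomorphism intertwining the left $\mathfrak{u}_i^-$-action whenever $\ell(s_iw) < \ell(w)$ and the right $\mathfrak{u}_{i'}^-$-action whenever $\ell(ws_{i'}) < \ell(w)$.

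For part (1), fix $i \in I$ with $\ell(s_iw) < \ell(w)$ and $b \in \mathcal{B}_w(\infty)$. The positivity property (i)$'$ gives
\[
(-1)^k f_i^k \cdot \Xi^{\rm up}(b) = \sum_{b' \in \mathcal{B}(\infty)} c_{b',b}^{(k,i)}\, \Xi^{\rm up}(b'), \qquad c_{b',b}^{(k,i)} \in \mathbb{R}_{\ge 0}.
\]
Applying $\eta_w$ to both sides and using the vanishing above collapses the sum to one indexed by $\mathcal{B}_w(\infty)$ with the same nonnegative coefficients, which is the required expansion. The right-action statement for $i'$ with $\ell(ws_{i'}) < \ell(w)$ is identical, using the second half of (i)$'$ and the right-action compatibility of $\eta_w$.

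For part (2), starting from (ii)$'$,
\[
\Xi^{\rm up}(\tilde{f}_i^k b_\infty) \cdot \Xi^{\rm up}(b) = \sum_{b' \in \mathcal{B}(\infty)} d_{b',b}^{(k,i)}\, \Xi^{\rm up}(b'), \qquad d_{b',b}^{(k,i)} \in \mathbb{R}_{\ge 0},
\]
apply the algebra homomorphism $\eta_w$ and again drop the vanishing terms. The only nontrivial input in the whole argument is Proposition \ref{p:pos_D} (which makes Corollary \ref{vanishing} available for our basis); everything else amounts to pushing identities along $\eta_w$ and observing that nonnegative real scalars remain nonnegative. I therefore do not expect any genuine obstacle: the corollary is structurally immediate once the vanishing of $\Xi_w^{\rm up}(b')$ off $\mathcal{B}_w(\infty)$ is in hand.
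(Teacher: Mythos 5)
Your argument is correct and matches the paper's intended proof: the paper declares the corollary an immediate consequence of Corollary \ref{vanishing} (4), Proposition \ref{p:pos_D}, and the positivity properties (i)$'$, (ii)$'$, transported along the restriction map $\eta_w$, which is exactly what you spell out. The remark about Corollary \ref{vanishing} (3) preventing collisions is harmless but unnecessary, since discarding the terms that vanish under $\eta_w$ already yields the desired nonnegative expansion.
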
\vspace{2mm}

Let ${\bf i} = (i_1, \ldots, i_r)$ be a reduced word for $w \in W$, and regard the coordinate ring $\mathbb{C}[U^- \cap X(w)]$ as a $\mathbb{C}$-subalgebra of $\mathbb{C}[U_{i_1} ^- \times \cdots \times U_{i_r} ^-] = \mathbb{C}[t_1, \ldots, t_r]$. 

\vspace{2mm}\begin{prop}\label{positivity proposition}
The coefficient of $t_1 ^{a_1} \cdots t_r ^{a_r}$ in $\Xi^{\rm up} _w(b)$ is a nonnegative real number for every $b \in \mathcal{B}_w(\infty)$ and $a_1, \ldots, a_r \in \mathbb{Z}_{\ge 0}$.
\end{prop}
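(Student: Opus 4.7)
The plan is to extract the coefficient of each monomial $t_1^{a_1}\cdots t_r^{a_r}$ in $\Xi^{\rm up}_w(b)$ by an iterated procedure combining the left action of the $f_{i_k}$'s with successive restrictions to the subvarieties $X(w_{\ge k})$, and to verify that at every stage the resulting element remains inside the nonnegative cone spanned by the perfect basis.

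By \eqref{derivation} and its analogue for $w_{\ge k}$, the left action of $f_{i_k}$ on $\mathbb{C}[U^- \cap X(w_{\ge k})]\subset \mathbb{C}[t_k,\ldots,t_r]$ is the derivation $-\partial/\partial t_k$. Moreover, in the birational coordinates, the restriction map $\mathbb{C}[U^-\cap X(w_{\ge k})] \to \mathbb{C}[U^- \cap X(w_{\ge k+1})]$ is the substitution $t_k\mapsto 0$, and by Corollary \ref{vanishing} (4) it sends each $\Xi^{\rm up}_{w_{\ge k}}(b')$ to $\Xi^{\rm up}_{w_{\ge k+1}}(b')$, understood as $0$ whenever $b'\notin \mathcal{B}_{w_{\ge k+1}}(\infty)$.

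Setting $w_{\ge r+1}\coloneqq e$ by convention, I define recursively $g_0\coloneqq \Xi^{\rm up}_w(b)$ and
\[
g_k\coloneqq \bigl((-f_{i_k})^{a_k}\cdot g_{k-1}\bigr)\big|_{X(w_{\ge k+1})} \in \mathbb{C}[U^-\cap X(w_{\ge k+1})] \qquad (k=1,\ldots,r).
\]
Using the derivation-and-substitution description, a direct computation on the polynomial expression of $g_0=\sum_{\bf b}c_{\bf b}\,t_1^{b_1}\cdots t_r^{b_r}$ shows that $g_k$ equals $a_1!\cdots a_k!$ times the partial sum $\sum_{b_{k+1},\ldots,b_r}c_{(a_1,\ldots,a_k,b_{k+1},\ldots,b_r)}\,t_{k+1}^{b_{k+1}}\cdots t_r^{b_r}$; in particular $g_r\in \mathbb{C}[U^-\cap X(e)]=\mathbb{C}$ equals $a_1!\cdots a_r!$ times the coefficient of $t_1^{a_1}\cdots t_r^{a_r}$ in $\Xi^{\rm up}_w(b)$.

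The assertion follows once we show, by induction on $k$, that
\[
g_k \in \sum_{b'\in \mathcal{B}_{w_{\ge k+1}}(\infty)} \mathbb{R}_{\ge 0}\,\Xi^{\rm up}_{w_{\ge k+1}}(b').
\]
The base case $k=0$ is immediate. For the step, if $g_{k-1}$ is a nonnegative real combination of $\Xi^{\rm up}_{w_{\ge k}}(b')$'s, then, since $(i_k,\ldots,i_r)$ is a reduced word for $w_{\ge k}$ (so $\ell(s_{i_k}w_{\ge k})<\ell(w_{\ge k})$), Corollary \ref{positivity corollary} (1) gives $(-f_{i_k})^{a_k}\cdot \Xi^{\rm up}_{w_{\ge k}}(b') \in \sum_{b''}\mathbb{R}_{\ge 0}\,\Xi^{\rm up}_{w_{\ge k}}(b'')$, and the restriction to $X(w_{\ge k+1})$ preserves this positivity by the paragraph above. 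Taking $k=r$ and noting $\mathcal{B}_e(\infty)=\{b_\infty\}$ with $\Xi^{\rm up}_e(b_\infty)=1$, we conclude $g_r\in\mathbb{R}_{\ge 0}$, hence the desired coefficient is nonnegative. The only substantive input is the positivity of the left action, which is already encapsulated in Corollary \ref{positivity corollary}; no further obstacle arises.
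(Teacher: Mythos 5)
Your proof is correct and takes essentially the same route as the paper: both extract the coefficient via iterated applications of the derivation $-\partial/\partial t_k$ followed by the restriction $t_k\mapsto 0$ to $X(w_{\ge k+1})$, and both invoke Corollary \ref{vanishing} (4) and Corollary \ref{positivity corollary} (1) at each stage to stay in the nonnegative cone spanned by the perfect basis. The only difference is cosmetic bookkeeping (your recursive $g_k$ absorbs the $(-1)^{a_1+\cdots+a_r}$ prefactor that the paper keeps outside the nested expression).
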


\begin{proof}
For $b \in \mathcal{B}_w(\infty)$ and $a_1, \ldots, a_r \in \mathbb{Z}_{\ge 0}$, denote by $A_b ^{(a_1, \ldots, a_r)} \in \mathbb{C}$ the coefficient of $t_1 ^{a_1} \cdots t_r ^{a_r}$ in $\Xi^{\rm up} _w(b)$. Then we know from the equality (\ref{derivation}) in Section 2 that $A_b ^{(a_1, \ldots, a_r)}$ is equal to the value \[\frac{(-1)^{a_1 + \cdots + a_r}}{a_1! \cdots a_r!}  (f_{i_r} ^{a_r} \cdot (\cdots (f_{i_2} ^{a_2} \cdot (f_{i_1} ^{a_1} \cdot \Xi^{\rm up} _w(b))|_{t_1 = 0})|_{t_2 = 0} \cdots)|_{t_{r-1} = 0})|_{t_r = 0}.\] If we write $w_{\ge k} \coloneqq s_{i_k} s_{i_{k+1}} \cdots s_{i_r}$ for $1 \le k \le r$ and $w_{\ge r+1} \coloneqq e$, the identity element of $W$, then the restriction map $\eta_{k, k+1} \colon \mathbb{C}[U^- \cap X(w_{\ge k})] \twoheadrightarrow \mathbb{C}[U^- \cap X(w_{\ge k+1})]$ is given by $t_k \mapsto 0$; hence we obtain the equality \[A_b ^{(a_1, \ldots, a_r)} = \frac{(-1)^{a_1 + \cdots + a_r}}{a_1! \cdots a_r!} \eta_{r, r+1} (f_{i_r} ^{a_r} \cdot (\eta_{r-1, r}(\cdots(\eta_{2, 3}(f_{i_2} ^{a_2} \cdot(\eta_{1, 2}(f_{i_1} ^{a_1} \cdot \Xi^{\rm up} _w(b)))))\cdots))),\] where we identify the coordinate ring $\mathbb{C}[U^- \cap X(w_{\ge r+1})] = \mathbb{C}[U^- \cap X(e)]$ with $\mathbb{C}$ by $\Xi^{\rm up} _e(b_\infty) \mapsto 1$ (note the equality $\mathcal{B}_e(\infty) = \{b_\infty\}$). Now by using Corollaries \ref{vanishing} (4) and \ref{positivity corollary} (1) repeatedly, we conclude that $A_b ^{(a_1, \ldots, a_r)}$ is a nonnegative real number. This proves the proposition.
\end{proof}

\section{Main result}

We write ${\bf a}^{\rm op} \coloneqq (a_r, \ldots, a_1)$ for an element ${\bf a} = (a_1, \ldots, a_r) \in \mathbb{R}^r$, and $H^{\rm op} \coloneqq \{{\bf a}^{\rm op} \mid {\bf a} \in H\}$ for a subset $H \subset \mathbb{R}^r$. The following is the main result of this paper.

\vspace{2mm}\begin{thm}\label{main result}
Let ${\bf i} \in I^r$ be a reduced word for $w \in W$, and ${\bf B}^{\rm up} = \{\Xi^{\rm up} (b) \mid b \in \mathcal{B}(\infty)\} \subset \mathbb{C}[U^-]$ a perfect basis with the positivity properties (i), (ii) in Section 4. Then the following equalities hold:
\begin{align*}
v_{\bf i} ^{\rm low} (\Xi^{\rm up} _w(b)) &= - \tilde{v}_{\bf i} ^{\rm high} (\Xi^{\rm up} _w(b))^{\rm op}\ {\it and}\\
\tilde{v}_{\bf i} ^{\rm low} (\Xi^{\rm up} _w(b)) &= - v_{\bf i} ^{\rm high} (\Xi^{\rm up} _w(b))^{\rm op}
\end{align*} 
for all $b \in \mathcal{B}_w(\infty)$.
\end{thm}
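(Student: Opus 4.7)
The plan is to combine the nonnegativity of Proposition~\ref{positivity proposition} with the algebraic identifications from Propositions~\ref{string polytopes}(1) and~\ref{polyhedral realizations}(1) and run an induction on $r=\ell(w)$ to pin down the ``low'' valuations.

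First I would observe that, since $\Xi_w^{\rm up}(b)\in\mathbb{C}[t_1,\dots,t_r]$ has only nonnegative coefficients, no cancellation can occur when any of the four valuations is evaluated on $\Xi_w^{\rm up}(b)$; each valuation is therefore determined by the appropriate lex-extremum of the support of $\Xi_w^{\rm up}(b)$. Combining this with $-v_{\bf i}^{\rm high}(\Xi_w^{\rm up}(b))=\Phi_{\bf i}(b)$ and $-\tilde v_{\bf i}^{\rm high}(\Xi_w^{\rm up}(b))=\Psi_{\bf i}(b)$, the theorem reduces to two assertions: the $<$-minimum of the support coincides with its $\prec$-maximum $\Psi_{\bf i}(b)^{\rm op}$, and the $\prec$-minimum coincides with the $<$-maximum $\Phi_{\bf i}(b)$.

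I would then proceed by induction on $r$, the base case $r=0$ being trivial. For the first assertion, the first coordinate of the $<$-minimum is the order of vanishing of $\Xi_w^{\rm up}(b)$ in $t_1$, which via the left action $f_{i_1}=-\partial_{t_1}$ is the smallest $k\ge 0$ such that $(-1)^k f_{i_1}^k\cdot\Xi_w^{\rm up}(b)$ survives restriction to $X(w_{\ge 2})=\{t_1=0\}$. Expanding $(-1)^k f_{i_1}^k\cdot\Xi_w^{\rm up}(b)=\sum_{b'}c_{b'}\Xi_w^{\rm up}(b')$ with $c_{b'}\ge 0$ (Corollary~\ref{positivity corollary}(1)) and using Corollary~\ref{vanishing}(4) (that $\Xi_w^{\rm up}(b')$ restricts to $0$ on $X(w_{\ge 2})$ unless $b'\in\mathcal B_{w_{\ge 2}}(\infty)$), this smallest $k$ equals the smallest $k$ for which at least one $b'\in\mathcal B_{w_{\ge 2}}(\infty)$ has $c_{b'}>0$. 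After matching this $k$ with the first entry $a_1'$ of $\Psi_{\bf i}(b)^{\rm op}$, the restricted polynomial is a nonnegative combination of basis elements $\Xi_{w_{\ge 2}}^{\rm up}(b'')$, to which the induction hypothesis for $(i_2,\dots,i_r)$ applies and yields the remaining coordinates. The second assertion is established symmetrically, peeling off the last coordinate via the right action $f_{i_r}=-\partial_{t_r}$ and restricting to $X(w_{\le r-1})=\{t_r=0\}$.

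The hard part will be the crystal-theoretic matching between the algebraic order-of-vanishing above and the combinatorial Kashiwara-embedding entry $a_1'=\varepsilon_{i_1}^*((\tilde e_{i_2}^*)^{a_2'}\cdots(\tilde e_{i_r}^*)^{a_r'}b)$. This matching will use the perfect-basis axiom~(iii) of Definition~\ref{definition of perfect bases} (which guarantees that $\Xi_w^{\rm up}(\tilde e_{i_1}^k b)$ appears in the expansion with a nonzero leading coefficient, hence with a strictly positive one by~(i)), the positivity property~(ii) (which prevents any other candidate leading contribution from being cancelled in the nonnegative sum), and the stability of the Demazure subcrystal $\mathcal B_{w_{\ge 2}}(\infty)$ under the relevant Kashiwara operators. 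Once this crystal lemma is in hand the induction closes with no further difficulty.
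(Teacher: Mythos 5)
Your reduction to a statement about the support of $\Xi^{\rm up}_w(b)$ is sound, and the observation that Proposition~\ref{positivity proposition} forbids any cancellation is exactly the right starting point; this much coincides with the paper. But the core of your argument — the one-variable peeling — has a genuine gap, precisely at the step you flag as ``the hard part''. You want to show that the order of vanishing in $t_1$ equals $a_1'$, i.e.\ the $t_1$-exponent of the $\prec$-maximal monomial. The left action $f_{i_1} = -\partial_{t_1}$, together with axiom~(iii), naturally controls the \emph{maximal} $t_1$-exponent, $\varepsilon_{i_1}(b) = a_1$, which is the first entry of $\Phi_{\bf i}(b)$; it does not directly control the minimal one. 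The entry $a_1'$, by contrast, is the \emph{last} coordinate produced by the Kashiwara-embedding algorithm, built from the right-hand operators $\tilde e_{i_k}^{\ast}$ starting from $k=r$; no left action of $f_{i_1}$ computes it on its own. The tools you cite (axiom~(iii), positivity~(ii), Demazure stability) give you that $\tilde e_{i_1}^{k}b$ appears with positive coefficient in $(-1)^k f_{i_1}^k\cdot\Xi^{\rm up}_w(b)$, and that nothing cancels; they do not tell you at which $k$ a term supported on $\mathcal{B}_{w_{\ge 2}}(\infty)$ first appears, and in general this is $a_1' < a_1$. There is also a secondary problem with the inductive closure: after fixing the $t_1$-exponent, the restricted polynomial is a \emph{positive combination} of several $\Xi^{\rm up}_{w_{\ge 2}}(b'')$, so the $<$-minimum of the restriction is a minimum over the induction-hypothesis values $\Psi_{{\bf i}_{\ge 2}}(b'')^{\rm op}$, and you would still need a crystal-theoretic argument that this minimum equals $(a_2',\dots,a_r')$; nothing in your sketch supplies one.

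The paper circumvents both difficulties with a different decomposition. Writing $\Phi_{\bf i}(b)=(a_1,\dots,a_r)$, it sets $b_1 := \tilde f_{i_1}^{a_1}b_\infty$ and $b_2 := \tilde f_{i_2}^{a_2}\cdots\tilde f_{i_r}^{a_r}b_\infty$ and compares $\Xi^{\rm up}_w(b)$ with the product $\Xi^{\rm up}_w(b_1)\cdot\Xi^{\rm up}_w(b_2) = \sum_{b_3}C^{(b_3)}_{b_1,b_2}\Xi^{\rm up}_w(b_3)$, which is a nonnegative combination by Corollary~\ref{positivity corollary}~(2). The crystal-theoretic identity $\Phi_{\bf i}(b)=\Phi_{\bf i}(b_1)+\Phi_{\bf i}(b_2)$, via Proposition~\ref{string polytopes}~(1), forces $b$ to be the unique $b_3$ realizing the extremum of $v_{\bf i}^{\rm high}$ in this sum, and all other contributing $b_3$ are strictly smaller in the auxiliary order $\lhd$ (total degree, then $\Phi_{\bf i}$), so the induction hypothesis applies to them. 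Combined with Proposition~\ref{positivity proposition}, this pins down $\tilde v_{\bf i}^{\rm low}(\Xi^{\rm up}_w(b))$ without ever isolating a single variable, and in particular without needing the direct matching you attempt. This is why positivity~(ii) is used: not to prevent cancellation in the expansion of $f_{i_1}^k\cdot\Xi^{\rm up}_w(b)$, but to guarantee nonnegativity of the product structure constants $C^{(b_3)}_{b_1,b_2}$. If you want to salvage the peeling strategy, you would need an independent proof that the minimal $t_1$-exponent of $\Xi^{\rm up}_w(b)$ is $a_1'$, which in effect is the first coordinate of the theorem itself; I do not see how to obtain it without something like the product argument.
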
\vspace{2mm}

Before proving Theorem \ref{main result}, we give some corollaries. The following corollary (Corollary \ref{first corollary}) is an immediate consequence of Corollary \ref{vanishing} (2) and Theorem \ref{main result}.

\vspace{2mm}\begin{cor}\label{first corollary}
Let ${\bf i} \in I^r$ be a reduced word for $w \in W$, $\lambda \in P_+$, and ${\bf B}^{\rm up} = \{\Xi^{\rm up} (b) \mid b \in \mathcal{B}(\infty)\} \subset \mathbb{C}[U^-]$ a perfect basis with the positivity properties (i), (ii) in Section 4. Then the following equalities hold:
\begin{align*}
v_{\bf i} ^{\rm low} (\Xi_{\lambda, w} ^{\rm up} (b)/\tau_\lambda) &= - \tilde{v}_{\bf i} ^{\rm high} (\Xi_{\lambda, w} ^{\rm up} (b)/\tau_\lambda)^{\rm op}\ {\it and}\\
\tilde{v}_{\bf i} ^{\rm low} (\Xi_{\lambda, w} ^{\rm up} (b)/\tau_\lambda) &= - v_{\bf i} ^{\rm high} (\Xi_{\lambda, w} ^{\rm up} (b)/\tau_\lambda)^{\rm op}
\end{align*} 
for all $b \in \mathcal{B}_w(\lambda)$.
\end{cor}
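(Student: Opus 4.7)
The plan is to reduce Corollary \ref{first corollary} directly to Theorem \ref{main result} via the identification provided by Corollary \ref{vanishing} (2). Fix $b \in \mathcal{B}_w(\lambda)$. By Proposition \ref{Demazure crystal} (3), the map $\pi_\lambda\colon \widetilde{\mathcal{B}}_w(\lambda) \to \mathcal{B}_w(\lambda)$ is a bijection, so there exists a unique element $\tilde{b} \in \widetilde{\mathcal{B}}_w(\lambda) \subset \mathcal{B}_w(\infty)$ with $\pi_\lambda(\tilde{b}) = b$.

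Next, I would apply Corollary \ref{vanishing} (2), which gives the equality
\[
\Xi^{\rm up}_w(\tilde{b}) \;=\; \iota_\lambda\bigl(\Xi^{\rm up}_{\lambda, w}(b)\bigr) \;=\; \bigl(\Xi^{\rm up}_{\lambda, w}(b)/\tau_\lambda\bigr)\big|_{U^- \cap X(w)}
\]
in $\mathbb{C}[U^- \cap X(w)] \subset \mathbb{C}(X(w))$. Since the four valuations $v_{\bf i}^{\rm low}, \tilde{v}_{\bf i}^{\rm low}, v_{\bf i}^{\rm high}, \tilde{v}_{\bf i}^{\rm high}$ are defined on the whole function field $\mathbb{C}(X(w))$ (identified with $\mathbb{C}(t_1, \ldots, t_r)$ via the birational parametrization of $U^- \cap X(w)$ explained in Section 2), evaluating them on $\Xi^{\rm up}_w(\tilde{b})$ is the same as evaluating them on the rational function $\Xi^{\rm up}_{\lambda, w}(b)/\tau_\lambda$.

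Finally, since $\tilde{b} \in \mathcal{B}_w(\infty)$, Theorem \ref{main result} applied to $\tilde{b}$ yields
\begin{align*}
v_{\bf i}^{\rm low}\bigl(\Xi^{\rm up}_w(\tilde{b})\bigr) &= -\tilde{v}_{\bf i}^{\rm high}\bigl(\Xi^{\rm up}_w(\tilde{b})\bigr)^{\rm op},\\
\tilde{v}_{\bf i}^{\rm low}\bigl(\Xi^{\rm up}_w(\tilde{b})\bigr) &= -v_{\bf i}^{\rm high}\bigl(\Xi^{\rm up}_w(\tilde{b})\bigr)^{\rm op}.
\end{align*}
Substituting the identification from the previous paragraph gives exactly the two displayed equalities of Corollary \ref{first corollary}. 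There is no genuine obstacle here: all the substantive content is already packaged in Theorem \ref{main result} (which handles the valuation computations) and Corollary \ref{vanishing} (2) (which handles the translation between global sections on the Schubert variety and regular functions on $U^- \cap X(w)$), so the only task is to verify that these two ingredients fit together, which they do by construction of $\iota_\lambda$ and $\tau_\lambda$.
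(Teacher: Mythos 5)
Your proof is correct and is exactly the argument the paper intends: the paper states that Corollary \ref{first corollary} is an immediate consequence of Corollary \ref{vanishing} (2) and Theorem \ref{main result}, and your write-up simply makes the bookkeeping explicit (lifting $b$ to $\tilde{b}\in\widetilde{\mathcal{B}}_w(\lambda)$ via Proposition \ref{Demazure crystal} (3), identifying $\Xi^{\rm up}_w(\tilde{b})$ with $\Xi^{\rm up}_{\lambda,w}(b)/\tau_\lambda$ in $\mathbb{C}(X(w))$, then applying Theorem \ref{main result}). The only detail left tacit, which is harmless since the paper has already established it, is that applying Corollary \ref{vanishing} requires property (D), which holds by Proposition \ref{p:pos_D} because of positivity property (i).
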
\vspace{2mm}

\vspace{2mm}\begin{cor}\label{corollary of main result}
Let ${\bf i} \in I^r$ be a reduced word for $w \in W$, and $\lambda \in P_+$. Then the following equalities hold:
\begin{align*}
&\Delta(X(w), \mathcal{L}_\lambda, v_{\bf i} ^{\rm low}, \tau_\lambda) = -\Delta(X(w), \mathcal{L}_\lambda, \tilde{v}_{\bf i} ^{\rm high}, \tau_\lambda)^{\rm op}\ {\it and}\\
&\Delta(X(w), \mathcal{L}_\lambda, \tilde{v}_{\bf i} ^{\rm low}, \tau_\lambda) = -\Delta(X(w), \mathcal{L}_\lambda, v_{\bf i} ^{\rm high}, \tau_\lambda)^{\rm op}.
\end{align*}
\end{cor}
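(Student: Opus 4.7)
The plan is to pass from the pointwise identities in Corollary \ref{first corollary} to identities of the semigroups $S(X(w), \mathcal{L}_\lambda, \cdot\,, \tau_\lambda)$, and then read off the equality of Newton-Okounkov bodies. The map $\mathbb{R} \times \mathbb{R}^r \to \mathbb{R} \times \mathbb{R}^r$ given by $(k, {\bf a}) \mapsto (k, -{\bf a}^{\rm op})$ is a linear automorphism that sends closed convex cones to closed convex cones and commutes with the slice at height $1$; so it suffices to show that this map carries $S(X(w), \mathcal{L}_\lambda, \tilde{v}_{\bf i}^{\rm high}, \tau_\lambda)$ onto $S(X(w), \mathcal{L}_\lambda, v_{\bf i}^{\rm low}, \tau_\lambda)$, and the analogous statement for the other pair.

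Fix $k > 0$. By Proposition \ref{p:pos_D}, the perfect basis ${\bf B}^{\rm up}$ has the property (D), so every nonzero section $\sigma \in H^0(X(w), \mathcal{L}_\lambda^{\otimes k}) = H^0(X(w), \mathcal{L}_{k\lambda})$ expands as $\sigma = \sum_{b \in \mathcal{B}_w(k\lambda)} c_b \Xi^{\rm up}_{k\lambda, w}(b)$. The key point is that the valuations $v_{\bf i}^{\rm low}(\Xi^{\rm up}_{k\lambda, w}(b)/\tau_\lambda^k)$ are pairwise distinct as $b$ ranges over $\mathcal{B}_w(k\lambda)$. The reason is that $\tau_\lambda^k$ and $\tau_{k\lambda}$ both lie in the one-dimensional weight-$k\lambda$ subspace of $H^0(X(w), \mathcal{L}_{k\lambda})$ and therefore differ by a nonzero scalar, so this valuation is unchanged upon replacing $\tau_\lambda^k$ by $\tau_{k\lambda}$; Corollary \ref{first corollary} combined with Proposition \ref{polyhedral realizations} (1) then identifies the resulting value with $\Psi_{\bf i}(\pi_{k\lambda}^{-1}(b))^{\rm op}$, and the injectivity of the Kashiwara embedding $\Psi_{\bf i}$ yields the distinctness.

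Given this distinctness, Proposition \ref{prop1,val} reduces the valuation of any nonzero $\sigma$ to the minimum over its support, and hence
\[
\{v_{\bf i}^{\rm low}(\sigma/\tau_\lambda^k) \mid \sigma \in H^0(X(w), \mathcal{L}_{k\lambda}) \setminus \{0\}\} = \{\Psi_{\bf i}(\tilde b)^{\rm op} \mid \tilde b \in \widetilde{\mathcal{B}}_w(k\lambda)\}.
\]
Taking the union over $k > 0$ and comparing with the description of $S(X(w), \mathcal{L}_\lambda, \tilde{v}_{\bf i}^{\rm high}, \tau_\lambda)$ coming from Proposition \ref{polyhedral realizations} (2), one reads off the first equality. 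The second equality is proved in exactly the same way, substituting the other half of Corollary \ref{first corollary}, Proposition \ref{string polytopes}, and the injectivity of the Littelmann string parametrization $\Phi_{\bf i}$. No substantive obstacle is expected; the only points requiring attention are the bookkeeping of the ``op'' operation and the harmless replacement $\tau_\lambda^k \leftrightarrow \tau_{k\lambda}$.
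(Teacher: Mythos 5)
Your proposal is correct and follows essentially the same route as the paper: reduce the claim on Newton--Okounkov bodies to the equality of valuation semigroups, use Proposition~\ref{p:pos_D} to get the $\mathbb{C}$-basis $\{\Xi^{\rm up}_{k\lambda,w}(b)\}$ of each $H^0(X(w),\mathcal{L}_{k\lambda})$, identify the valuations of basis elements with Kashiwara embeddings (resp.\ string parametrizations) via Corollary~\ref{first corollary} and Propositions~\ref{polyhedral realizations}, \ref{string polytopes}, and apply Proposition~\ref{prop1,val} using injectivity of $\Psi_{\bf i}$ (resp.\ $\Phi_{\bf i}$). You are in fact slightly more explicit than the paper about the passage from $k=1$ to general $k>0$ and the harmless replacement $\tau_\lambda^k\leftrightarrow\tau_{k\lambda}$, which the paper only addresses by citing the proof of \cite[Corollary 4.2]{FN}.
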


\begin{proof}
We prove only the assertion for $v_{\bf i} ^{\rm low}$ and $\tilde{v}_{\bf i} ^{\rm high}$; a proof of the assertion for $\tilde{v}_{\bf i} ^{\rm low}$ and $v_{\bf i} ^{\rm high}$ is similar. Let $\{\Xi^{\rm up} (b) \mid b \in \mathcal{B}(\infty)\}$ be a perfect basis with the positivity properties (i), (ii); the existence of such a perfect basis is guaranteed by Proposition \ref{existence with positivity}. We see by Corollary \ref{first corollary} and Proposition \ref{polyhedral realizations} (1) that \[v_{\bf i} ^{\rm low} (\Xi_{\lambda, w} ^{\rm up} (\pi_\lambda(b))/\tau_\lambda) = -\tilde{v}_{\bf i} ^{\rm high} (\Xi_{\lambda, w} ^{\rm up} (\pi_\lambda(b))/\tau_\lambda)^{\rm op} = \Psi_{\bf i}(b)^{\rm op}\] for all $b \in \widetilde{\mathcal{B}}_w(\lambda)$. Remark that $\{\Xi_{\lambda, w} ^{\rm up} (b) \mid b \in \mathcal{B}_w(\lambda)\}$ is a $\mathbb{C}$-basis of $H^0(X(w), \mathcal{L}_\lambda)$, and that $-\tilde{v}_{\bf i} ^{\rm high} (\Xi_{\lambda, w} ^{\rm up} (\pi_\lambda(b))/\tau_\lambda) = \Psi_{\bf i}(b)$, $b \in \widetilde{\mathcal{B}}_w(\lambda)$, are all distinct. Hence we see by Proposition \ref{prop1,val} that \[\{v_{\bf i} ^{\rm low} (\sigma/\tau_\lambda) \mid \sigma \in H^0(X(w), \mathcal{L}_\lambda) \setminus \{0\}\} = \{-\tilde{v}_{\bf i} ^{\rm high} (\sigma/\tau_\lambda)^{\rm op} \mid \sigma \in H^0(X(w), \mathcal{L}_\lambda) \setminus \{0\}\}.\] This implies the assertion by the definition of Newton-Okounkov bodies (see also the proof of \cite[Corollary 4.2]{FN}).
\end{proof}
Since $v_{\bf i} ^{\rm low} = v_{X(w_{\ge \bullet})}$ and $\tilde{v}_{\bf i} ^{\rm low} = v_{X(w_{\le \bullet})}$, we see that Theorem and Corollary in Introduction follow immediately by Propositions \ref{string polytopes}, \ref{polyhedral realizations} and Corollaries \ref{vanishing} (2), \ref{first corollary}, \ref{corollary of main result} (see also Remark \ref{parametrization for modules}). 

The following corollaries (Corollaries \ref{corollary rationality for lowest} and \ref{corollary integral points for lowest}) are immediate consequences of Propositions \ref{string lattice points} (2), \ref{polyhedral lattice points} (2), \ref{string polytopes}, \ref{polyhedral realizations}, Theorem \ref{main result}, and Corollary \ref{corollary of main result}.

\vspace{2mm}\begin{cor}\label{corollary rationality for lowest}
The Newton-Okounkov bodies $\Delta(X(w), \mathcal{L}_\lambda, v_{\bf i} ^{\rm low}, \tau_\lambda)$ and $\Delta(X(w), \mathcal{L}_\lambda, \tilde{v}_{\bf i} ^{\rm low}, \tau_\lambda)$ are both rational convex polytopes.
\end{cor}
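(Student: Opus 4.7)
The plan is to unwind the corollary as a two-step chain, moving each lowest-term Newton--Okounkov body to a highest-term one via Corollary \ref{corollary of main result}, and then identifying the highest-term body with a known string/polyhedral realization, which has already been shown to be a rational convex polytope.

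More explicitly, first I would apply Corollary \ref{corollary of main result} to rewrite
\begin{align*}
\Delta(X(w), \mathcal{L}_\lambda, v_{\bf i}^{\rm low}, \tau_\lambda) &= -\Delta(X(w), \mathcal{L}_\lambda, \tilde{v}_{\bf i}^{\rm high}, \tau_\lambda)^{\rm op},\\
\Delta(X(w), \mathcal{L}_\lambda, \tilde{v}_{\bf i}^{\rm low}, \tau_\lambda) &= -\Delta(X(w), \mathcal{L}_\lambda, v_{\bf i}^{\rm high}, \tau_\lambda)^{\rm op}.
\end{align*}
Next, Proposition \ref{polyhedral realizations} (2) identifies the right-hand side of the first line (up to the sign) with the Nakashima--Zelevinsky polyhedral realization $\widetilde{\Delta}_{\bf i}^{(\lambda, w)}$, while Proposition \ref{string polytopes} (2) identifies the right-hand side of the second line with the Littelmann string polytope $\Delta_{\bf i}^{(\lambda, w)}$. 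Thus the two Newton--Okounkov bodies in question are $(\widetilde{\Delta}_{\bf i}^{(\lambda, w)})^{\rm op}$ and $(\Delta_{\bf i}^{(\lambda, w)})^{\rm op}$, respectively.

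Finally, Proposition \ref{polyhedral lattice points} (2) asserts that $\widetilde{\Delta}_{\bf i}^{(\lambda, w)}$ is a rational convex polytope, and Proposition \ref{string lattice points} (2) does the same for $\Delta_{\bf i}^{(\lambda, w)}$. Since the involution ${\bf a} \mapsto {\bf a}^{\rm op}$ on $\mathbb{R}^r$ is a $\mathbb{Z}$-linear automorphism (it is the permutation reversing the coordinates), it sends rational convex polytopes to rational convex polytopes; the same holds for the sign change ${\bf a} \mapsto -{\bf a}$. Therefore both Newton--Okounkov bodies are rational convex polytopes, which completes the proof.

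There is no genuine obstacle here: the substance of the corollary is in Theorem \ref{main result} (via Corollary \ref{corollary of main result}) and in the rationality results of \cite{BZ}, \cite{Lit}, \cite{FN} cited in Propositions \ref{string lattice points} and \ref{polyhedral lattice points}. The only thing to verify, and it is trivial, is that coordinate reversal and sign change preserve the class of rational convex polytopes.
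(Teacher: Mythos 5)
Your proposal is correct and follows essentially the same route as the paper, which deduces the corollary as an immediate consequence of Corollary \ref{corollary of main result} (hence Theorem \ref{main result}) together with Propositions \ref{string polytopes} (2), \ref{polyhedral realizations} (2), \ref{string lattice points} (2), and \ref{polyhedral lattice points} (2). The only addition you make explicit, namely that coordinate reversal and sign change preserve rational convex polytopes, is exactly the trivial point left implicit in the paper.
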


\vspace{2mm}\begin{cor}\label{corollary integral points for lowest}
Let $\{\Xi^{\rm up} (b) \mid b \in \mathcal{B}(\infty)\} \subset \mathbb{C}[U^-]$ be  a perfect basis with the positivity properties (i), (ii). Then the following equalities hold:
\begin{align*}
&\Delta(X(w), \mathcal{L}_\lambda, v_{\bf i} ^{\rm low}, \tau_\lambda) \cap \mathbb{Z}^r = \{v_{\bf i} ^{\rm low} (\Xi^{\rm up} _w(b)) \mid b \in \widetilde{\mathcal{B}}_w (\lambda)\}\ {\it and}\\
&\Delta(X(w), \mathcal{L}_\lambda, \tilde{v}_{\bf i} ^{\rm low}, \tau_\lambda) \cap \mathbb{Z}^r = \{\tilde{v}_{\bf i} ^{\rm low} (\Xi^{\rm up} _w(b)) \mid b \in \widetilde{\mathcal{B}}_w (\lambda)\}.
\end{align*}
\end{cor}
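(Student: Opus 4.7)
My plan is to prove the first equality
\[v_{\bf i}^{\rm low}(\Xi^{\rm up}_w(b)) = -\tilde{v}_{\bf i}^{\rm high}(\Xi^{\rm up}_w(b))^{\rm op};\]
the second is proved by an entirely symmetric argument in which the roles of the left and right $U(\mathfrak{u}^-)$-actions and of the lexicographic orders $<,\prec$ are exchanged. By Proposition \ref{polyhedral realizations}~(1), $-\tilde{v}_{\bf i}^{\rm high}(\Xi^{\rm up}_w(b)) = \Psi_{\bf i}(b)$, so the right-hand side equals $\Psi_{\bf i}(b)^{\rm op} \eqqcolon {\bf a}^{\ast} = (a_1^{\ast},\ldots,a_r^{\ast})$. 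By the definition of $\tilde v_{\bf i}^{\rm high}$, $t_1^{a_1^\ast}\cdots t_r^{a_r^\ast}$ is the $\prec$-highest monomial in the polynomial $\Xi^{\rm up}_w(b)\in\mathbb{C}[t_1,\ldots,t_r]$, and its coefficient is strictly positive by Proposition \ref{positivity proposition}. The theorem therefore reduces to showing that this same monomial is simultaneously the $<$-lowest one occurring in $\Xi^{\rm up}_w(b)$.

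I proceed by induction on $r = \ell(w)$; the base case $r=0$ is trivial. For the inductive step, decompose
\[\Xi^{\rm up}_w(b) = \sum_{n \ge 0} t_1^n h_n, \qquad h_n \in \mathbb{C}[t_2,\ldots,t_r].\]
The derivation formula \eqref{derivation}, combined with positivity (i)$'$ applied to $f_{i_1}^n\cdot\Xi^{\rm up}(b)$ and the vanishing statement of Corollary \ref{vanishing}~(4), yields
\[h_n = \frac{1}{n!}\sum_{b'\in\mathcal{B}_{w_{\ge 2}}(\infty)} c_{n,b'}\,\Xi^{\rm up}_{w_{\ge 2}}(b'), \qquad c_{n,b'}\in\mathbb{R}_{\ge 0}.\]
In particular each $h_n$ has non-negative coefficients, so no cancellation occurs, and $n_0 \coloneqq \min\{n\mid h_n\ne 0\}$ is precisely the minimum $t_1$-exponent in $\Xi^{\rm up}_w(b)$. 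Setting ${\bf i}' \coloneqq (i_2,\ldots,i_r)$, this gives
\[v_{\bf i}^{\rm low}(\Xi^{\rm up}_w(b)) = \bigl(n_0,\,v_{{\bf i}'}^{\rm low}(h_{n_0})\bigr),\]
and, by the inductive hypothesis applied to each $\Xi^{\rm up}_{w_{\ge 2}}(b')$ with $c_{n_0,b'}>0$ together with non-cancellation, $v_{{\bf i}'}^{\rm low}(h_{n_0})$ is the coordinate-wise $<$-minimum of the vectors $\Psi_{{\bf i}'}(b')^{\rm op}$ over the contributing $b'$.

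It then remains to verify (a)~$n_0 = a_1^{\ast}$, and (b)~$\min_{<}\{\Psi_{{\bf i}'}(b')^{\rm op}\mid c_{n_0,b'}>0\} = (a_2^{\ast},\ldots,a_r^{\ast})$. The inequality $n_0 \le a_1^{\ast}$ is immediate, since the $\prec$-highest monomial itself contributes to $h_{a_1^{\ast}}$. For the reverse inequality I combine Proposition \ref{valuation Chevalley}~(2) --- which identifies $a_1^{\ast}$ as the $t_1$-degree of the polynomial obtained from $\Xi^{\rm up}_w(b)$ by iteratively extracting highest $t_r,t_{r-1},\ldots,t_2$-coefficients and restricting to $X(w_{\le 1})$ --- with positivity (ii)$'$ and Remark \ref{r:length one case}: the latter let me express $\Xi^{\rm up}(b)$ via non-negative product decompositions involving $\Xi^{\rm up}(\tilde f_{i_1}^{a_1^{\ast}} b_\infty)$, forcing the $t_1$-support of $\Xi^{\rm up}_w(b)$ to lie in $\{n\ge a_1^{\ast}\}$. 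Part (b) then follows by tracking how the right actions $\cdot f_{i_r}^{a_r^{\ast}},\ldots,\cdot f_{i_2}^{a_2^{\ast}}$ and the restrictions to $X(w_{\le k})$ used to compute $\Psi_{\bf i}(b)$ descend to the summand $t_1^{n_0}h_{n_0}$ and commute with the inductive procedure on $h_{n_0}$.

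The principal obstacle is (a): controlling the $t_1$-support of $\Xi^{\rm up}_w(b)$ from below. Positivity (i) alone provides non-negative coefficients but no lower bound on exponents; it is precisely here that positivity (ii) --- through its reformulation (ii)$'$ and Remark \ref{r:length one case} --- becomes indispensable, explaining why both positivity hypotheses are required in the theorem.
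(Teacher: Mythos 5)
You have proved a different statement from the one asked. The corollary to be established is a statement about \emph{lattice points of Newton--Okounkov polytopes}, namely that $\Delta(X(w), \mathcal{L}_\lambda, v_{\bf i}^{\rm low}, \tau_\lambda)\cap\mathbb{Z}^r$ coincides with the image of $\widetilde{\mathcal{B}}_w(\lambda)$ under $b\mapsto v_{\bf i}^{\rm low}(\Xi^{\rm up}_w(b))$ (and likewise for $\tilde v_{\bf i}^{\rm low}$). What you set out to prove instead is the identity
\[v_{\bf i}^{\rm low}(\Xi^{\rm up}_w(b)) = -\tilde v_{\bf i}^{\rm high}(\Xi^{\rm up}_w(b))^{\rm op},\]
which is Theorem~\ref{main result} of the paper --- a deeper fact that the corollary is meant to be a quick consequence of. The intended derivation is short: Theorem~\ref{main result} and Proposition~\ref{polyhedral realizations}~(1) give $v_{\bf i}^{\rm low}(\Xi^{\rm up}_w(b)) = \Psi_{\bf i}(b)^{\rm op}$; Corollary~\ref{corollary of main result} and Proposition~\ref{polyhedral realizations}~(2) give $\Delta(X(w), \mathcal{L}_\lambda, v_{\bf i}^{\rm low}, \tau_\lambda) = (\widetilde\Delta_{\bf i}^{(\lambda,w)})^{\rm op}$; and Proposition~\ref{polyhedral lattice points}~(2) says $\widetilde\Delta_{\bf i}^{(\lambda,w)}\cap\mathbb{Z}^r = \Psi_{\bf i}(\widetilde{\mathcal{B}}_w(\lambda))$. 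Combining these three displays gives both equalities at once. None of this is in your write-up, so even if your main argument were correct it would not yield the stated corollary without further explanation.

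Moreover, as an attempted proof of Theorem~\ref{main result}, your argument has a genuine gap at the step you yourself flag as ``the principal obstacle,'' namely the lower bound $n_0 \ge a_1^*$ on the $t_1$-support. You claim that positivity~(ii)$'$ and Remark~\ref{r:length one case} ``let me express $\Xi^{\rm up}(b)$ via non-negative product decompositions involving $\Xi^{\rm up}(\tilde f_{i_1}^{a_1^*} b_\infty)$,'' but this is not what positivity~(ii) provides. Positivity~(ii)$'$ controls the expansion of a \emph{product} $\Xi^{\rm up}(\tilde f_i^k b_\infty)\cdot\Xi^{\rm up}(b')$ in terms of the basis; it does not let one \emph{factor} a given basis element. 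The paper's proof of Theorem~\ref{main result} goes the other way around: it forms the product $\Xi^{\rm up}_w(b_1)\cdot\Xi^{\rm up}_w(b_2)$ with $b_1 = \tilde f_{i_1}^{a_1}b_\infty$, $b_2 = \tilde f_{i_2}^{a_2}\cdots\tilde f_{i_r}^{a_r}b_\infty$ (determined by $\Phi_{\bf i}(b)$), shows that $b$ appears with nonzero coefficient in its expansion by comparing $v_{\bf i}^{\rm high}$-values via Proposition~\ref{string polytopes}~(1), and then runs a double induction on weight degree and $\Phi_{\bf i}$-parameter (the order $\lhd$), not on $\ell(w)$. Your appeal to Proposition~\ref{valuation Chevalley}~(2) together with an unspecified ``tracking'' of the right actions through $t_1^{n_0}h_{n_0}$ in step~(b) is likewise not a proof; the right actions by $f_{i_k}$ differentiate in $t_k$ for $k\ge 2$ and there is no reason the crystal parametrizations $\Psi_{{\bf i}'}(b')$ of the summands assemble into $(a_2^*,\ldots,a_r^*)$ without the full inductive scaffolding of the paper.
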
\vspace{2mm}

In a way similar to our proof of Corollary \ref{corollary of main result}, we see that the following equalities hold:
\begin{align*}
&S(X(w), \mathcal{L}_\lambda, v_{\bf i} ^{\rm low}, \tau_\lambda) = \{(k, -{\bf a}^{\rm op}) \mid (k, {\bf a}) \in S(X(w), \mathcal{L}_\lambda, \tilde{v}_{\bf i} ^{\rm high}, \tau_\lambda)\}\ {\rm and}\\
&S(X(w), \mathcal{L}_\lambda, \tilde{v}_{\bf i} ^{\rm low}, \tau_\lambda) = \{(k, -{\bf a}^{\rm op}) \mid (k, {\bf a}) \in S(X(w), \mathcal{L}_\lambda, v_{\bf i} ^{\rm high}, \tau_\lambda)\}.
\end{align*}
Hence we obtain the following by Propositions \ref{string lattice points} (1), \ref{polyhedral lattice points} (1), \ref{string polytopes} (2), \ref{polyhedral realizations} (2) and Gordan's lemma (see, for instance, \cite[Proposition 1.2.17]{CLS}).

\vspace{2mm}\begin{cor}
The semigroups $S(X(w), \mathcal{L}_\lambda, v_{\bf i} ^{\rm low}, \tau_\lambda)$ and $S(X(w), \mathcal{L}_\lambda, \tilde{v}_{\bf i} ^{\rm low}, \tau_\lambda)$ are both finitely generated.
\end{cor}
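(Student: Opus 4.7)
The plan is to chain together the displayed identities stated just before the corollary with the earlier propositions that already establish finite generation on the ``highest-term'' side, then transport finite generation across an explicit $\mathbb{Z}$-linear automorphism of $\mathbb{Z} \times \mathbb{Z}^r$. Concretely, I would treat the two semigroups in parallel; let me describe the argument for $S(X(w), \mathcal{L}_\lambda, v_{\bf i}^{\rm low}, \tau_\lambda)$, the other case being completely analogous.

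First, by the equality
\[
S(X(w), \mathcal{L}_\lambda, v_{\bf i}^{\rm low}, \tau_\lambda) = \{(k, -{\bf a}^{\rm op}) \mid (k, {\bf a}) \in S(X(w), \mathcal{L}_\lambda, \tilde{v}_{\bf i}^{\rm high}, \tau_\lambda)\}
\]
stated just above the corollary, it suffices to show that $S(X(w), \mathcal{L}_\lambda, \tilde{v}_{\bf i}^{\rm high}, \tau_\lambda)$ is finitely generated, since the map $\Omega \colon \mathbb{Z} \times \mathbb{Z}^r \xrightarrow{\sim} \mathbb{Z} \times \mathbb{Z}^r$, $(k, {\bf a}) \mapsto (k, -{\bf a}^{\rm op})$, is a $\mathbb{Z}$-linear automorphism of the lattice $\mathbb{Z} \times \mathbb{Z}^r$ that preserves addition; hence it sends a finitely generated sub-semigroup to a finitely generated sub-semigroup.

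Next, Proposition \ref{polyhedral realizations} (2) identifies
\[
\widetilde{\mathcal{S}}_{\bf i}^{(\lambda, w)} = \omega(S(X(w), \mathcal{L}_\lambda, \tilde{v}_{\bf i}^{\rm high}, \tau_\lambda)),
\]
where $\omega(k, {\bf a}) = (k, -{\bf a})$ is another lattice automorphism. Therefore it is enough to prove that $\widetilde{\mathcal{S}}_{\bf i}^{(\lambda, w)}$ is a finitely generated semigroup. But Proposition \ref{polyhedral lattice points} (1) already gives $\widetilde{\mathcal{S}}_{\bf i}^{(\lambda, w)} = \widetilde{\mathcal{C}}_{\bf i}^{(\lambda, w)} \cap (\mathbb{Z}_{>0} \times \mathbb{Z}^r)$, and $\widetilde{\mathcal{C}}_{\bf i}^{(\lambda, w)}$ is a rational convex polyhedral cone. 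By Gordan's lemma (cited as \cite[Proposition 1.2.17]{CLS}), the set of lattice points in a rational convex polyhedral cone is a finitely generated semigroup, so $\widetilde{\mathcal{C}}_{\bf i}^{(\lambda, w)} \cap (\mathbb{Z} \times \mathbb{Z}^r)$ is finitely generated; intersecting with the half-space $\mathbb{Z}_{>0} \times \mathbb{Z}^r$ (equivalently, discarding the generators lying in $\{0\} \times \mathbb{Z}^r$, which together with positive-height generators still generate the ``strictly positive first coordinate'' part, using the fact that any element of $\widetilde{\mathcal{S}}_{\bf i}^{(\lambda, w)}$ has strictly positive first coordinate by construction) yields a finitely generated semigroup as well.

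The argument for $S(X(w), \mathcal{L}_\lambda, \tilde{v}_{\bf i}^{\rm low}, \tau_\lambda)$ is identical after replacing Proposition \ref{polyhedral realizations} by Proposition \ref{string polytopes} and $\widetilde{\mathcal{S}}_{\bf i}^{(\lambda, w)}$ by $\mathcal{S}_{\bf i}^{(\lambda, w)}$, now invoking Proposition \ref{string lattice points} (1) to apply Gordan's lemma to $\mathcal{C}_{\bf i}^{(\lambda, w)}$. There is no real obstacle here; the only point that requires a moment's care is the reduction from ``lattice points in a rational polyhedral cone'' to ``lattice points with strictly positive first coordinate,'' but since every element of $S(X(w), \mathcal{L}_\lambda, v_{\bf i}, \tau_\lambda)$ has strictly positive first coordinate by Definition \ref{Newton-Okounkov convex body}, the two sets agree in the situations at hand, so Gordan's lemma applies directly.
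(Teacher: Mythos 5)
Your overall route is exactly the paper's: use the two displayed identities to transport the problem to the highest-term side, identify those semigroups (up to the lattice automorphisms $\omega$ and $(k,{\bf a})\mapsto(k,-{\bf a}^{\rm op})$) with $\mathcal{S}_{\bf i}^{(\lambda,w)}$ and $\widetilde{\mathcal{S}}_{\bf i}^{(\lambda,w)}$ via Propositions \ref{string polytopes} (2) and \ref{polyhedral realizations} (2), and then invoke Propositions \ref{string lattice points} (1), \ref{polyhedral lattice points} (1) together with Gordan's lemma. So the structure is correct and matches the intended proof.

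The one step whose justification, as written, does not hold up is the passage from the monoid of all lattice points of the cone to the semigroup of lattice points with strictly positive first coordinate. The fact that every element of $S(X(w),\mathcal{L}_\lambda,v_{\bf i},\tau_\lambda)$, or of $\widetilde{\mathcal{S}}_{\bf i}^{(\lambda,w)}=\widetilde{\mathcal{C}}_{\bf i}^{(\lambda,w)}\cap(\mathbb{Z}_{>0}\times\mathbb{Z}^r)$, has positive first coordinate is not by itself sufficient, and ``discarding the height-zero generators'' does not in general leave a generating set for the positive-height part: for the rational polyhedral cone $C=\mathbb{R}_{\ge 0}\times\mathbb{R}_{\ge 0}\subset\mathbb{R}\times\mathbb{R}$, the semigroup $C\cap(\mathbb{Z}_{>0}\times\mathbb{Z})$ is \emph{not} finitely generated, since every $(1,a)$ with $a\in\mathbb{Z}_{\ge 0}$ is indecomposable there, even though all elements have positive first coordinate. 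What makes the argument work in the situation at hand is that the height-one slices $\Delta_{\bf i}^{(\lambda,w)}$ and $\widetilde{\Delta}_{\bf i}^{(\lambda,w)}$ are \emph{bounded} (they are rational convex polytopes by Propositions \ref{string lattice points} (2) and \ref{polyhedral lattice points} (2)); hence the cones $\mathcal{C}_{\bf i}^{(\lambda,w)}$ and $\widetilde{\mathcal{C}}_{\bf i}^{(\lambda,w)}$ meet the hyperplane $\{0\}\times\mathbb{R}^r$ only in the origin. Consequently the set of all lattice points of the cone is $\{0\}\sqcup\mathcal{S}_{\bf i}^{(\lambda,w)}$ (resp.\ $\{0\}\sqcup\widetilde{\mathcal{S}}_{\bf i}^{(\lambda,w)}$), Gordan's lemma gives finite generation of this monoid, and its nonzero generators generate the semigroup $\mathcal{S}_{\bf i}^{(\lambda,w)}$ (resp.\ $\widetilde{\mathcal{S}}_{\bf i}^{(\lambda,w)}$). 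With this replacement of your parenthetical justification, the proof is complete and coincides with the paper's.
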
\vspace{2mm}

\begin{proof}[Proof of Theorem \ref{main result}]
We prove only the assertion for the valuations $\tilde{v}^{\rm low} _{\bf i}$ and $v_{\bf i} ^{\rm high}$; a proof of the other claim is similar. Write ${\bf i} = (i_1, \ldots, i_r)$ and $\Phi_{\bf i} (b) = (a_1, \ldots, a_r)$ for $b \in \mathcal{B}_w (\infty)$. We first consider the case $b \in \mathcal{B}_{s_{i_1}}(\infty)$. In this case, there exists $a \in \mathbb{Z}_{\ge 0}$ such that $b = \tilde{f}_{i_1} ^a b_\infty$. Then we deduce from the definition of $\Phi_{\bf i}$ that
\begin{align*}
-v^{\rm high} _{\bf i}(\Xi^{\rm up} _w(b)) &= \Phi_{\bf i} (b)\quad({\rm by\ Proposition}\ \ref{string polytopes}\ (1))\\
&= (a, 0, \ldots, 0).
\end{align*}
Hence it follows from the definition of $v^{\rm high} _{\bf i}$ that $\Xi^{\rm up} _w(b) = c t_1 ^a + ({\rm other\ terms})$ for some $c \neq 0$, where ``other terms'' means a linear combination of monomials that are not equal to $t_1 ^a$. Since $\Xi^{\rm up} (\tilde{f}_{i_1} ^a b_\infty) \in U(\mathfrak{u}^-)^\ast _{{\rm gr}, a{\bf e}_{i_1}}$, it follows that all monomials in ``other terms'' are of degree $a$, and hence that they contain $t_k$ for some $2 \le k \le r$ as variables. By the definition of $\tilde{v}^{\rm low} _{\bf i}$, this implies that 
\begin{align*}
\tilde{v}^{\rm low} _{\bf i}(\Xi^{\rm up} _w(b)) &= (0, \ldots, 0, a)\\
&= (a, 0, \ldots, 0)^{\rm op}\\
&= -v^{\rm high} _{\bf i}(\Xi^{\rm up} _w(b))^{\rm op}.
\end{align*}
Next we proceed by induction on $r = \ell(w)$. The case $r = 1$ is already included in the special case above. Assume that $r\ge 2$. Let us consider the special case $a_1=0$. Then $b$ is an element of $\mathcal{B}_{w_{\ge 2}} (\infty)$, where $w_{\ge 2} \coloneqq s_{i_2} \cdots s_{i_r}$; furthermore, by the definition of $v_{\bf i} ^{\rm high}$, the equality $a_1 = 0$ implies that $t_1$ does not appear in $\Xi^{\rm up} _w(b) \in \mathbb{C}[t_1, \ldots, t_r]$. Hence we deduce that
\begin{align*}
\tilde{v}_{\bf i} ^{\rm low} (\Xi^{\rm up} _w(b)) &= (\tilde{v}_{{\bf i}_{\ge 2}} ^{\rm low} (\Xi^{\rm up} _{w_{\ge 2}} (b)), 0)\\
&= -(v_{{\bf i}_{\ge 2}} ^{\rm high}(\Xi^{\rm up} _{w_{\ge 2}} (b))^{\rm op}, 0)\quad({\rm by\ induction\ hypothesis})\\
&= -(0, v_{{\bf i}_{\ge 2}} ^{\rm high}(\Xi^{\rm up} _{w_{\ge 2}} (b)))^{\rm op}\\
&= -v^{\rm high} _{\bf i}(\Xi^{\rm up} _w(b))^{\rm op},
\end{align*}
where ${\bf i}_{\ge 2} \coloneqq (i_2, \ldots, i_r)$, a reduced word for $w_{\ge 2}$. 

Now we consider a total order $\lhd$ on $\mathcal{B}_w (\infty)$ defined as follows: 
\[
b'\lhd b''\;\text{if and only if}\;
\begin{cases}
|b'|<|b''|,\;\text{or}\\
|b'|=|b''|\;\text{and}\;\Phi_{\bf i} (b')<\Phi_{\bf i} (b'')\;\text{with respect to the lexicographic order}\;<;
\end{cases}
\]
here $|b'''|:=\sum_{i\in I}d_i$ for $b'''\in\mathcal{B}_w (\infty)$ with $\wt (b''')=-\sum_{i\in I}d_i\alpha_i$. We prove the assertion by induction on $b$ with respect to the total order $\lhd$. We only have to consider the case that $b$ does not belong to the two special cases above. Namely, assume that $b \notin \mathcal{B}_{s_{i_1}} (\infty)$ and $a_1 > 0$. Set $b_1 \coloneqq \tilde{f}_{i_1} ^{a_1} b_\infty$ and $b_2 \coloneqq \tilde{f}_{i_2} ^{a_2} \cdots \tilde{f}_{i_r} ^{a_r} b_\infty$. Then we have $\Phi_{\bf i} (b_1) = (a_1, 0, \ldots, 0)$ and $\Phi_{\bf i} (b_2) = (0, a_2, \ldots, a_r)$.
Hence it follows that 
\begin{equation}
\begin{aligned}\label{highest equation 1}
v^{\rm high} _{\bf i}(\Xi^{\rm up} _w(b)) &= -(a_1, \ldots, a_r)\\
&= -\Phi_{\bf i} (b_1) -\Phi_{\bf i} (b_2)\\ 
&= v^{\rm high} _{\bf i}(\Xi^{\rm up} _w (b_1)) + v^{\rm high} _{\bf i}(\Xi^{\rm up} _w (b_2))\quad({\rm by\ Proposition}\ \ref{string polytopes}\ (1)).
\end{aligned}
\end{equation}
Now we deduce from the results for the two special cases that
\begin{equation}
\begin{aligned}\label{highest equation 2}
v^{\rm high} _{\bf i}(\Xi^{\rm up} _w (b_1)) + v^{\rm high} _{\bf i}(\Xi^{\rm up} _w (b_2)) &= -(\tilde{v}_{\bf i} ^{\rm low}(\Xi^{\rm up} _w (b_1))^{\rm op} + \tilde{v}_{\bf i} ^{\rm low}(\Xi^{\rm up} _w (b_2))^{\rm op})\\
&= - \tilde{v}_{\bf i} ^{\rm low}(\Xi^{\rm up} _w (b_1) \cdot \Xi^{\rm up} _w (b_2))^{\rm op}\quad({\rm by\ Definition}\ \ref{def,val}).
\end{aligned}
\end{equation}
From these, it suffices to prove the equality $\tilde{v}_{\bf i} ^{\rm low}(\Xi^{\rm up} _w (b_1) \cdot \Xi^{\rm up} _w (b_2)) = \tilde{v}_{\bf i} ^{\rm low}(\Xi^{\rm up} _w(b))$. Now we know from Corollary \ref{positivity corollary} (2) that
\begin{equation}
\begin{aligned}\label{expansion}
\Xi^{\rm up} _w (b_1) \cdot \Xi^{\rm up} _w (b_2) = \sum_{b_3 \in \mathcal{B}_w (\infty)} C_{b_1, b_2} ^{(b_3)} \cdot \Xi^{\rm up} _w (b_3) 
\end{aligned}
\end{equation}
for some $C_{b_1, b_2} ^{(b_3)} \in \mathbb{R}_{\ge 0}$, $b_3 \in \mathcal{B}_w (\infty)$. Since $C_{b_1, b_2} ^{(b_3)}$ is nonnegative for all $b_3 \in \mathcal{B}_w (\infty)$, Proposition \ref{positivity proposition} implies that any cancellation of monomials does not occur in the right hand side of (\ref{expansion}). From this, we deduce by the definition of $\tilde{v}_{\bf i} ^{\rm low}$ that \[\tilde{v}_{\bf i} ^{\rm low} (\Xi^{\rm up} _w (b_1) \cdot \Xi^{\rm up} _w (b_2)) = \min\{\tilde{v}_{\bf i} ^{\rm low}(\Xi^{\rm up} _w (b_3)) \mid b_3 \in \mathcal{B}_w (\infty),\ C_{b_1, b_2} ^{(b_3)} \neq 0\},\] where ``min'' means the minimum with respect to the lexicographic order $<$. Similarly, we see that
\begin{equation}
\begin{aligned}\label{highest valuation of product}
-v_{\bf i} ^{\rm high} (\Xi^{\rm up} _w (b_1) \cdot \Xi^{\rm up} _w (b_2)) = \max\{-v_{\bf i} ^{\rm high}(\Xi^{\rm up} _w (b_3)) \mid b_3 \in \mathcal{B}_w (\infty),\ C_{b_1, b_2} ^{(b_3)} \neq 0\},
\end{aligned}
\end{equation}
where ``max'' means the maximum with respect to the lexicographic order $<$. Recall that $-v_{\bf i} ^{\rm high}(\Xi^{\rm up} _w (b_3)) = \Phi_{\bf i} (b_3)$, $b_3 \in \mathcal{B}_w(\infty)$, are all distinct. Hence equations (\ref{highest equation 1}) and (\ref{highest valuation of product}) imply that $C_{b_1, b_2} ^{(b)} \neq 0$, and that if $C_{b_1, b_2} ^{(b_3)} \neq 0$ and $b_3 \neq b$, then $\wt (b)=\wt (b_3)$ and $-v_{\bf i} ^{\rm high}(\Xi^{\rm up} _w (b_3)) < -v_{\bf i} ^{\rm high}(\Xi^{\rm up} _w(b))$; in particular, it holds that 
\begin{align*}
\tilde{v}_{\bf i} ^{\rm low}(\Xi^{\rm up} _w (b_1) \cdot \Xi^{\rm up} _w (b_2))^{\rm op} &= -v_{\bf i} ^{\rm high}(\Xi^{\rm up} _w(b))\quad({\rm by\ equations}\ (\ref{highest equation 1})\ {\rm and}\ (\ref{highest equation 2}))\\
&> -v_{\bf i} ^{\rm high}(\Xi^{\rm up} _w (b_3))\\
&= \tilde{v}_{\bf i} ^{\rm low} (\Xi^{\rm up} _w (b_3))^{\rm op}\quad({\rm by\ induction\ hypothesis\ concerning}\ b).
\end{align*}
From these, we obtain that $\tilde{v}_{\bf i} ^{\rm low} (\Xi^{\rm up} _w (b_1) \cdot \Xi^{\rm up} _w (b_2)) = \tilde{v}_{\bf i} ^{\rm low}(\Xi^{\rm up} _w(b))$. This proves the theorem.
\end{proof}

\begin{rem}\normalfont
Since Corollary \ref{corollary of main result} follows from Corollary \ref{first corollary}, it is natural to ask why we consider not only $\{\Xi^{\rm up} _{\lambda, w} (b) \mid b \in \mathcal{B}_w(\lambda)\}$ but also $\{\Xi^{\rm up} _w(b) \mid b \in \mathcal{B}_w(\infty)\}$. The reason is that in order to prove the assertion of Corollary \ref{first corollary} for $\{\Xi^{\rm up} _{\lambda, w} (b) \mid b \in \mathcal{B}_w(\lambda)\} \subset H^0(X(w), \mathcal{L}_\lambda)$, we have to consider an element of $\mathbb{C}[U^- \cap X(w)]$ that does not belong to $\iota_\lambda(H^0(X(w), \mathcal{L}_\lambda))$. In our proof of Theorem \ref{main result}, we use the elements $b_1, b_2\in \mathcal{B}_w(\infty)$ determined from $b\in \mathcal{B}_w(\infty)$ with $b \notin \mathcal{B}_{s_{i_1}} (\infty)$ and $a_1 > 0$. An important point is that, even if $b \in \widetilde{\mathcal{B}}_w(\lambda)$ for some $\lambda \in P_+$, the element $b_1$ is not necessarily an element of $\widetilde{\mathcal{B}}_w(\lambda)$. Let us see this with a specific example. Take $G, {\bf i}, \lambda$ as in Example \ref{main example}. Then the set $\Phi_{\bf i}(\mathcal{B}(\lambda)) = \Phi_{\bf i}(\widetilde{\mathcal{B}}(\lambda))$ of string parametrizations is identical to \[\{(0, 0, 0), (1, 0, 0), (0, 1, 0), (1, 1, 0), (0, 1, 1), (2, 1, 0), (0, 2, 1), (1, 2, 1)\}.\] For $b \in \widetilde{\mathcal{B}}(\lambda)$ such that $\Phi_{\bf i}(b) = (2, 1, 0)$, the element $b_1 \in \mathcal{B}(\infty)$ satisfies $\Phi_{\bf i}(b_1) = (2, 0, 0)$, which implies that $b_1 \notin \widetilde{\mathcal{B}}(\lambda)$. 
\end{rem}

\vspace{4mm}

\end{document}